\newtheorem{teo}{Theorem}[section]
\newtheorem{prop}[teo]{Proposition}
\newtheorem{lem}[teo]{Lemma}
\newtheorem{pro}[teo]{Problem}
\newtheorem{algo}[teo]{Algorithm}
\newtheorem{asume}[teo]{Assumption}
\newtheorem{rem}[teo]{Remark}
\theoremstyle{definition}
\newtheorem{ini}[teo]{Initialization}
\newtheorem{claim}[teo]{Claim}
\pgfplotsset{compat=1.18}
\newcommand{\N}{\mathbb N}
\newcommand{\R}{\mathbb R}
\renewcommand{\H}{\mathcal{H}}
\newcommand{\G}{\mathcal G}
\newcommand{\HH}{{\bm{\mathcal{H}}}}
\newcommand{\Id}{{\bf Id}}
\newcommand{\id}{\textnormal{Id}}
\newcommand{\x}{\bm x}
\newcommand{\y}{\bm y}
\newcommand{\w}{\bm w}
\newcommand{\z}{\bm z}
\newcommand{\s}{\sigma}
\newcommand{\T}{\tau}
\newcommand{\weak}{\rightharpoonup}
\newcommand{\zer}{\textnormal{zer}}
\newcommand{\gra}{\textnormal{gra}\,}
\newcommand{\argm}[1]{\underset{#1}{\argmin\, }}
\newcommand{\cblue}[1]{%
	{\color{black}%
		\ifmmode
		#1
		\else
		\textnormal{#1}
		\fi
}}
\newcommand{\scal}[2]{{\left\langle{{#1}\mid{#2}}\right\rangle}}
\newcommand{\menge}[2]{\big\{{#1}~\big |~{#2}\big\}}
\newcommand{\pinf}{\ensuremath{{+\infty}}}
\newcommand{\RR}{\ensuremath{\mathbb{R}}}
\newcommand{\RP}{\ensuremath{\left[0,+\infty\right[}}
\newcommand{\RPP}{\ensuremath{\left]0,+\infty\right[}}
\newcommand{\RX}{\ensuremath{\left]-\infty,+\infty\right]}}
\newcommand{\prox}{\ensuremath{\text{\rm prox}\,}}
\newcommand{\weakly}{\ensuremath{\:\rightharpoonup\:}}
\numberwithin{equation}{section}
\numberwithin{equation}{section}
\DeclareSymbolFont{fouriersymbols}{FMS}{futm}{m}{n}
\DeclareSymbolFont{fourierlargesymbols}{FMX}{futm}{m}{n}
\DeclareMathDelimiter{\nr}{\mathord}{fouriersymbols}{152}{fourierlargesymbols}{147}
\DeclareMathOperator*{\argmin}{arg\,min}
\DeclareMathDelimiter{\nr}{\mathord}{fouriersymbols}{152}{fourierlargesymbols}{147}
\DeclareMathAlphabet{\mathpzc}{OT1}{pzc}{m}{it}
\title[Relaxed and Inertial Nonlinear Forward-Backward Algorithm]{Relaxed and Inertial Nonlinear Forward-Backward Algorithm}
\author{Juan José Maulén$^{1,2}$}
\author{Fernando Rold\'an$^3$}
\author{Cristian Vega$^4$}
\address{$^1$ Instituto de Ciencias de la Ingeniería, Universidad de O'Higgins, Rancagua, Chile. {\it 
		E-mail address:} 
	{\sf{juan.maulen@postdoc.uoh.cl}}.}
\address{$^2$ Centro de Modelamiento Matemático (CMM), Universidad de Chile, Chile.}
\address{$^3$ Departamento de Ingeniería Matemática, Universidad de Concepción, Concepción, Chile. {\it 
		E-mail address:} 
	{\sf{fernandoroldan@udec.cl}}. }
\address{$4$ Instituto de Alta investigaci\'on (IAI), Universidad de Tarapac\'a,
	Arica, Chile. {\it 
		E-mail address: } 
	{\sf{cristianvegacereno6@gmail.com}}. }
\begin{document}
	\begin{abstract}
		The Nonlinear Forward-Backward (NFB) algorithm, also known as \textit{warped resolvent} iterations, is a splitting method for finding zeros of sums of monotone operators. In particular cases, NFB reduces to well-known algorithms such as Forward-Backward, Forward-Backward-Forward, Chambolle--Pock, and Condat--V\~u. Therefore, NFB can be used to solve monotone inclusions involving sums of maximally monotone, cocoercive, monotone and Lipschitz operators as well as linear compositions terms. In this article, we study the \cblue{weak and strong (linear)} convergence of NFB with inertial and relaxation steps. Our results recover known convergence guarantees for the aforementioned methods when extended with inertial and relaxation terms. Additionally, we establish the convergence of inertial and relaxed variants of the Forward-Backward-Half-Forward and Forward-Primal--Dual-Half-Forward algorithms, which, to the best of our knowledge, are new contributions. We consider both nondecreasing and decreasing sequences of inertial parameters, the latter being a novel approach in the context of inertial algorithms. To evaluate the performance of these strategies, we present numerical experiments on optimization problems with affine constraints and on image restoration tasks. Our results show that decreasing inertial sequences can accelerate the numerical convergence of the algorithms.
		\par
		\bigskip
		
		\noindent \textbf{Keywords.} {\it Operator splitting, monotone operators, monotone inclusion, inertial methods, convex optimization}
		\par
		\bigskip \noindent
		2020 {\it Mathematics Subject Classification.}{ 47H05, 65K05,
			65K15, 90C25}
		
	\end{abstract}
	
	\maketitle
	
	\section{Introduction}
	In this article, we study an inertial and relaxed version of the Nonlinear Forward-Backward algorithm (NFB) \cite{BuiCombettesWarped2020,Giselsson2021NFBS} (see also \cite{Combettes2024AN}). This method can be applied to numerically solve a broad class of monotone inclusion problems, such as the following:
	\begin{pro}\label{pro:intro}
		Let $\HH$ be a real Hilbert space, let $\bm{A} \colon \HH \to 2^{\HH}$ be a maximally monotone operator, and let $\bm{C} \colon \HH \to \HH$ be a $\beta$-cocoercive operator with $\beta \in \RPP$. The problem is to 
		\begin{equation}
			\text{find } \x \in \HH \text{ such that } 0 \in \bm{A}\x+\bm{C}\x,
		\end{equation}
		under the assumption that its solution set is nonempty.
	\end{pro}
	This problem encompasses several applications such as mechanical problems \cite{Gabay1983,Glowinsky1975,Goldstein1964}, differential inclusions \cite{AubinHelene2009,Showalter1997}, convex programming \cite{Combettes2018MP}, game theory \cite{Nash13,BricenoLopez2019}, data science \cite{CombettesPesquet2021strategies}, image processing \cite{BotHendrich2014TV,Briceno2011ImRe}, traffic theory \cite{Nets1,Fuku96,GafniBert84}, among other disciplines.  
	
	In the context of Problem~\ref{pro:intro}, given $\z_0 \in \HH$, NFB iterates as follows:
	\begin{equation}\label{eq:NFBgis}
		(\forall n\in\N)\quad 
		\begin{array}{l}
			\left\lfloor
			\begin{array}{l}
				\x_n=( \bm{M}_n+\bm{A})^{-1}(\bm{M}_n-\bm{C})\z_n,\\
				\z_{n+1}=\z_n - \theta_n\mu_n\bm{S}^{-1}(\bm{M}_n \z_n -\bm{M}_n\x_n),
			\end{array}
			\right.
		\end{array}
	\end{equation}
	where $\bm{S} \colon \HH \to \HH$ is a linear bounded definite positive operator, for every $n \in \N$, $\bm{M}_n \colon \HH \to \HH$ is a monotone operator, $(\theta_n) \in~ ]0,2[$, and $(\mu_n)_{n \in \N}$ is an adequate sequence in $\RPP$ (see \eqref{eq:mun}). For guaranteeing the convergence of NFB to a solution to Problem~\ref{pro:intro}, the authors in \cite{Giselsson2021NFBS} assume that, for every $n \in \N$, $\bm{M}_n \colon \HH \to \HH$ is Lipschitz continuous and 1-strongly monotone with respect to a linear, bounded and definite positive operator $\bm{P}\colon \HH \to \HH$. In particular instances, the iterations defined by  \eqref{eq:NFBgis} reduce to well known optimization methods. 
	
	\begin{itemize}
		\item \underline{\textbf{Forward-Backward}}: If $\bm{S} = \Id$ and, for every $n \in \N$, $\theta_n\mu_n = \gamma$, $\bm M_n = \Id / \gamma$, for $\gamma \in \RPP$, NFB reduces to the classic Forward-Backward algorithm (FB) \cite{Goldstein1964,Lions1979SIAM,passty1979JMAA}. Moreover, since the Chambolle--Pock algorithm \cite{ChambollePock2011} and the Condat--V\~u algorithm \cite{Condat13,Vu13}  are preconditioned versions of FB applied to primal-dual inclusions, NFB also covers these algorithms.
		
		\item \underline{\textbf{Forward-Backward-Forward}}:  Given a maximally monotone operator $A \colon \HH \to 2^\HH$ and a Lipschitz operator $B \colon \HH \to \HH$, by defining $\bm{A}=A+B$, $\bm{S}=\Id$, $\theta_n\mu_n = \gamma$, for every $n \in \N$, $\bm M_n = \Id / \gamma-B$ and assuming that $\bm{C}=0$, the Problem \eqref{pro:intro} reduces to 	
		\begin{equation}
			\text{find } \x \in \HH \text{ such that } 0 \in A\x + B\x,
		\end{equation}
		and NFB reduces to the Forward-Backward-Forward method (FBF) \cite{Tseng2000SIAM}. 
		\cblue{Given $z_0 \in \HH$,  FBF iterates as follows:				
			\begin{equation}\label{eq:algoFBFintro}
				(\forall n\in\N)\quad 
				\begin{array}{l}
					\left\lfloor
					\begin{array}{l}
						x_{n} = J_{\tau A} (z_n-\tau Dz_{n}) 
						) \\
						z_{n+1} = x_{n}-\tau(Dx_{n}-Dz_{n}).
					\end{array}
					\right.
				\end{array}
		\end{equation}}
		
		\item \underline{\textbf{Forward-Backward-Half-Forward}}: considering the same setting as before but with $\bm{C}\neq 0$, the Forward-Backward-Half-Forward algorithm (FBHF) \cite{BricenoDavis2018} is recovered. Note that, although the structure of FBHF can be recovered naturally, this is not the case for the condition on the steps guaranteeing convergence for FBHF, which deduction is laborious (see \cite[Appendix~A.3]{Giselsson2021NFBS}). \cblue{Given $z_0 \in \HH$,  FBHF iterates as follows:				
			\begin{equation}\label{eq:algoFBHFintro}
				(\forall n\in\N)\quad 
				\begin{array}{l}
					\left\lfloor
					\begin{array}{l}
						x_{n} = J_{\tau A} (z_n-\tau (Dz_{n}+\bm{C} 
						z_n) 
						) \\
						z_{n+1} = x_{n}-\tau(Dx_{n}-Dz_{n}).
					\end{array}
					\right.
				\end{array}
		\end{equation}}
	\end{itemize}	
	These specific instances highlight the versatility of NFB, as it generalizes several methods from the literature and enables the solution of problems involving Lipschitz operators, primal-dual inclusions, linear compositions, among others. In \cite{MorinBanertGiselsson2022}, the authors proposed a nonlinear forward-backward with momentum correction term (NFBM), also used for solving Problem~\ref{pro:intro}. Moreover, NFBM also allows recovering and extend existing methods in the literature, such as, forward-half-reflected-backward (FHRB) \cite{Malitsky2020SIAMJO}, Chambolle--Pock, Condat--V\~u, Douglas--Rachford \cite{Eckstein1992,Lions1979SIAM}, among others. \cblue{However, NFBM cannot recover Tseng-type algorithms such as FBF, FBHF, and forward-primal-dual-half-forward (FPDHF)~\cite{Roldan20254op}}. Note that NFBM also considers the {\it warped resolvent} $(\bm{M}_n+\bm{A})^{-1}$ on its iterates. The authors in \cite{MorinBanertGiselsson2022}, derived the convergence of NFBM under the assumption that, for every $n \in \N$, $\gamma_n\bm{M}_n-\bm{S}$ is $\zeta_n$-Lipschitz with respect to $\bm{S}$, for $\zeta_n \in ]0,1[$ and $\gamma_n \in \RPP$. In particular, if this condition holds, it is possible to choose $\bm{P}$ so that $\bm{M}_n$ is Lipschitz continuous and $1$-strongly monotone with respect to $\bm{P}$ \cite[Proposition~2.1]{MorinBanertGiselsson2022}, thereby recovering the assumptions on $\bm{M}_n$ for NFB made in \cite{Giselsson2021NFBS}.
	

	Several works have incorporated inertial and relaxation steps into splitting algorithms for solving monotone inclusions and it has been shown that inertial and relaxation steps can improve the performance of the algorithm. For instance, inertial proximal algorithms were introduced in \cite{Alvarez2001} and further developed in works such as \cite{AttouchCabot19,AttouchPeypouquet2019,Moudafi2003,Thang2022,PesquetPustelnik2012}. The inertial forward-backward algorithm was first studied in \cite{Alvarez2004} and later has been studied, for example, in \cite{AttouchCabot19,Attouch2014,AttouchPeypouquet2016,BeckTeboulle2009,ChambolleDossal2015,LorenzInFB2015,Bot2016NA,Thang2025}. Inertial and relaxed variants of the forward-backward-forward algorithm have been analyzed in \cite{Bot2016NA,Bot2016Tsengine,BotrelaxFBF2023,TranDinh2025}.
	Moreover, inertial versions of the Douglas--Rachford and Chambolle--Pock algorithms have been studied, for example, in \cite{Alves2020,Bot2015AMC} and \cite{He2022inertial,Valkonen2020}, respectively. The Krasnosel'ski\u{\i}–Mann algorithm, which generalize some methods mentioned above, has been extended to incorporate inertial and relaxation steps in \cite{CombettesGlaudin2017,CortildPeypouquet2024,Dong2022new,MaulenFierroPeypouquet2023}. \cblue{The inertial version of the reflected-forward-backward \cite{Cevher2020SVVA} has been studied in \cite{Bot2025RFB}.} In the recent paper \cite{RoldanVega2025}, the authors presented an inertial and relaxed version of NFBM, and consequently, of all the methods generalized by NFBM. 
	
	In this article, we analyze the convergence of NFB, including inertial and relaxation steps, under the assumption that $\gamma_n \bm{M}_n - \bm{S}$ is $\zeta_n$-Lipschitz with respect to $\bm{S}$. This assumption simplifies the derivation of convergence conditions on the step-size, relaxation, and inertial parameters, making it easier to recover the corresponding conditions for particular cases of NFB (see, for instance, Remark~\ref{rem:FPDHF}\ref{rem:FPDHF1} and \cite[Appendix~A.3]{Giselsson2021NFBS}). \cblue{We also derive the linear convergence of NFB with inertial and relaxation steps in the case where $\bm{A}$ is strongly monotone}. As a result, we recover and extend the relaxed and inertial versions of FB, FBF, FBHF, Chambolle--Pock, Condat--V\~u, and FPDHF. In particular, we consider the case where the sequence of inertial parameters is decreasing, which, to the best of our knowledge, is the first approach that considers this type of inertial strategy, \cblue{whereas they are usually assumed to be nondecreasing, for instance in \cite{Bot2016MTA,Bot2016NA,Bot2015AMC,BotrelaxFBF2023,LorenzInFB2015,MaulenFierroPeypouquet2023}}. To facilitate implementation and the choice of step-sizes, relaxation parameters, and inertial parameters, we propose an initialization procedure for running these algorithms. We also discuss how the original assumptions made in~\cite{Giselsson2021NFBS} for including inertial and relaxation steps in NFB can be incorporated into our analysis. Finally, we present numerical experiments testing the benefits of inertial and relaxation steps in NFB, applied to optimization problems with affine constraints and image restoration tasks, \cblue{both in the monotone and in the strongly monotone case}.
	
	The main contributions of this article are summarized as follows:
	\begin{itemize}
		\item We incorporate inertial and relaxation steps into NFB.
		\item We recover convergence results for inertial and relaxed versions of FB, FBF, Chambolle--Pock, and Condat--V\~u, and extend them to the case of decreasing inertial parameters.
		\item We propose relaxed and inertial versions of FBHF and FPDHF. In particular, we show that FPDHF correspond to a NFB iteration.
		\item We provide numerical evidence showing that decreasing inertial parameters can accelerate convergence.
	\end{itemize}
	The article is structured as follows. In Section~\ref{sec:pre}, we introduce our notation and preliminaries. Section~\ref{sec:conv} presents our main convergence results for inertial and relaxed NFB. This section also discusses how the assumptions in~\cite{Giselsson2021NFBS} relate to our framework and analyzes the conditions on the inertial and relaxation parameters to ensure convergence. In Section~\ref{sec:particularcases}, we derive relaxed and inertial versions of FB, FBF, FBHF, Chambolle--Pock, Condat--V\~u, and FPDHF as particular instances of NFB. Section~\ref{se:NE} presents numerical experiments in optimization with affine constraints and image restoration. Finally, conclusions are given in Section~\ref{sec:conclu}.
		\section{Preliminaries and Notation}\label{sec:pre}
		Let $\H$ be a real Hilbert space with inner product $\scal{\cdot}{\cdot}$, we denote the induced norm by the inner product by $\|\cdot\|$. The symbols $\weakly$ and $\to$ \cblue{denote} the weak and strong 
		convergence, respectively.
		Let $S \colon \H \to \H$ be a self adjoint positive definite linear bounded operator, we denote $\scal{\cdot}{\cdot}_S = \scal{\cdot}{S(\cdot)}$ and $\|\cdot\|_S=\sqrt{\scal{\cdot}{\cdot}_S}$. Note that $(\H,\scal{\cdot}{\cdot}_S)$ is a real Hilbert space \cblue{and we have
			\begin{equation}\label{eq:normSmS}
				(\forall x \in \H )  \quad \|S^{-1}x\|_{S} = \|x\|_{S^{-1}}
			\end{equation}
			and
			\begin{equation}\label{eq:scalSmS}
				(\forall x \in \H )  (\forall y \in \H ) \quad 2\scal{x}{y} = 2\scal{x}{S^{-1}y}_S = \|Sx-y\|^2_{S^{-1}}-\|x\|^2_S -\|y\|^2_{S^{-1}}.
			\end{equation}
			Moreover, it follows from \cite[Corollary~2.15]{bauschkebook2017} that
			\begin{equation}\label{eq:normaalpha}
				(\forall x \in \H) (\forall y \in \H) (\forall \alpha \in \R) \quad \|\alpha x +(1-\alpha)y\|^2_S = \alpha\|x\|^2_S+(1-\alpha)\|y\|^2_S-\alpha(1-\alpha)\|x-y\|^2_S.
		\end{equation}}
		Let $T\colon \H \rightarrow 
		\H$, and $\beta \in \left]0,+\infty\right[$. The operator $T$ is 
		$\beta$-cocoercive with respect to $S$ if  
		\begin{equation}\label{def:cocowrtS}
			(\forall (x,y) \in \H^2) \quad \scal{x-y}{Tx-Ty}
			\geq \beta \|Tx - Ty 
			\|^2_{S^{-1}}.
		\end{equation}
		The operator $T$ is $\beta$-Lipschitzian with respect to $S$ if
		\begin{equation}\label{def:lipwrtS}
			\|Tx-Ty\|_{S^{-1}} \leq  
			\beta\|x - y \|_{S}.
		\end{equation}
		Let $A\colon \H \to 2^\H$ be a set-valued operator, we define the graph of $A$ and the set of zeros of $A$, respectively, by
		\begin{align}\label{eq:defgraph}
			&\gra A = \menge{(x,u) \in \H\times \H}{ u \in Ax},\\
			&\zer A = \menge{x \in \H }{0 \in Ax}.
		\end{align}
		The inverse of $A$ is $A^{-1}\colon \H  \to 2^\H \colon u 
		\mapsto 
		\menge{x \in \H}{u \in Ax}$ and the resolvent of $A$ is 
		$J_A=(\id+A)^{-1}$. \cblue{Let $s \in \R$}, the operator $A$ is \cblue{s}-monotone \cblue{with respect to $S$} if 
		\begin{equation}\label{def:monotone}
			(\forall (x,u) \in \gra A) (\forall (y,v) \in \gra A)\quad \scal{x-y}{u-v} \geq \cblue{s\|u-v\|^2_S}.
		\end{equation}
		\cblue{If $s=0$, $A$ is called monotone and, if $s>0$, $A$ is called $s$-strongly monotone. Moreover, $A$} is maximally monotone if it is monotone and there exists no 
		monotone operator $B :\H\to  2^{\H}$ such that $\gra B$ properly 
		contains $\gra A$. If $A$ is maximally monotone, then $J_A$ is single valued and has full domain. We say that $A$ is $\beta$-strongly monotone with respect to $S$, for $\beta >0$, if 
		\begin{equation}\label{def:Smonotone}
			(\forall (x,u) \in \gra A) (\forall (y,v) \in \gra A)\quad \scal{x-y}{u-v} \geq \beta \|x-y\|^2_S.
		\end{equation}
		We denote by $\Gamma_0(\H)$ the class of proper lower 
		semicontinuous convex functions $f\colon\H\to\RX$. Let 
		$f\in\Gamma_0(\H)$.
		The Fenchel conjugate of $f$ is given by 
		\begin{equation}\label{eq:deff*}
			f^*\colon u\mapsto \sup_{x\in\H}(\scal{x}{u}-f(x))
		\end{equation}
		and we have $ f^* \in\Gamma_0(\H)$. The subdifferential of $f$ is the set-valued operator defined by
		$$\partial f\colon x\mapsto \menge{u\in\H}{(\forall y\in\H)\:\: 
			f(x)+\scal{y-x}{u}\le f(y)}.$$
		It follows that $\partial f$ is a maximally monotone operator, that $(\partial f)^{-1}=\partial f^*$, and that
		$\zer\partial f$ is the set of 
		minimizers of $f$, which is denoted by $\arg\min_{x\in \H}f$. 
		The proximity operator of $f$ is defined by
		\begin{equation}
			\label{e:prox}
			\prox_{f}\colon 
			x\mapsto\argm{y\in\H}\big(f(y)+\frac{1}{2}\|x-y\|^2\big).
		\end{equation} 
		Note that $\prox_f=J_{\partial f}$.
		Given a nonempty closed convex set $C\subset\H$, we denote by 
		$P_C$ the projection onto $C$ and by
		$\iota_C\in\Gamma_0(\H)$ the indicator function of $C$, which 
		takes the value $0$ in $C$ and $\pinf$ otherwise. We denote by $\mathcal{N}_C = \partial \iota_C$ the normal cone to $C$.
		For further properties of monotone operators,
		nonexpansive mappings, and convex analysis, the 
		reader is referred to \cite{bauschkebook2017}.
		
		\cblue{ We denote by $\ell_+^1(\N)$ the set of nonnegative summable sequences.
			The following lemmas will be used throughout the article.
			\begin{lem}[Lemma~11 \cite{BotrelaxFBF2023}]\label{lemmabot}
				Let $(a_n)_{n \in \N}$ be a sequence in $\RP$, let  $(b_n)_{n \in \N}$ be a sequence in $\ell_+^1(\N)$, and let $(\alpha_n)_{n \in \N}$ be a sequence in $[0,\alpha]$ for
				$\alpha \in [0,1[$. Suppose that
				\begin{equation*}
					(\forall n \geq 1) \quad	a_{n+1} \leq a_n + \alpha_n(a_n-a_{n-1}) + b_n.
				\end{equation*}
				Then $(a_n)_{n \in \N}$ converges.	
			\end{lem}
			\begin{lem}[Lemma~5.31 \cite{bauschkebook2017}]\label{lemmaquasifej}
				Let $(a_n)_{n \in \N}$ and $(b_n)_{n \in \N}$ be sequences in $\RP$, and let $(t_n)_{n \in \N}$ and  $(r_n)_{n \in \N}$  be sequences in $\ell_+^1(\N)$ such that
				\begin{equation*}
					(\forall n \in \N) \quad a_{n+1} \leq (1+t_n) a_n - b_n + r_n.
				\end{equation*}
				Then $(a_n)_{n \in \N}$ converges and $\sum_{n \in \N} b_n < +\infty$.	
			\end{lem}
			\begin{lem}[Lemma~3.80 \cite{ChouzenouxPesquetRoldan2023}]\label{lemmaPesquet}
				Let $(a_n)_{n \in \N}$ be a sequence in $\RP$, let $(\alpha_n)_{n \in \N}$ be a sequence in $\RP$ such that $\alpha_n \to 0$, and let $r \in~]0,1[$. Suppose that
				\begin{equation*}
					(\forall n \in \N)	\quad a_{n+1} \leq (1-r+\alpha_n)a_n.
				\end{equation*}
				Then, $(a_n)_{n \in \N}$ converges linearly to $0$.
		\end{lem}}
		
		\section{Inertial Nonlinear Forward-Backward}\label{sec:conv}
		In this section we consider the following problem.
		\begin{pro}\label{pro:main}
			Let $\HH$ be a real Hilbert space, let $\bm{S} \colon \HH \to \HH$ be a self adjoint strongly monotone linear bounded operator, let $\bm{A} \colon \HH \to 2^{\HH}$ be a maximally monotone operator, let $\bm{C} \colon \HH \to \HH$ be a $\beta$-cocoercive operator with respect to $\bm{S}$ for $\beta \in \RPP$. The problem is to 
			\begin{equation*}
				\text{find } \x \in \HH \text{ such that } 0 \in \bm{A}\x+\bm{C}\x,
			\end{equation*}
			under the assumption that its solution set is nonempty.
		\end{pro}
		The following assumption plays a central role in the convergence analysis of the inertial-relaxed nonlinear forward-backward algorithm studied in this article.
		\begin{asume}\label{asume:1}
			In the context of Problem~\ref{pro:main}, let $\gamma \in \RPP$ and let $(\gamma_n)_{n \in \N}$ be a sequence in $[\gamma,+\infty[$. Moreover, let $\bm{M}_n \colon \HH \to \HH$ be a singled valued operator such that $\gamma_n \bm{M}_n-\bm{S}$ is a $\zeta_n$-Lipschitz operator with respect to $\bm{S}$ for $\zeta_n \in [\cblue{\underline{\zeta},\zeta}]$ where \cblue{$\underline{\zeta} \in [0,\zeta]$} and $\zeta \in~]0,1[$.
		\end{asume}
		The following algorithm incorporates inertial and relaxation steps into NFB.
		\begin{algo}\label{algo:algoInertialRelaxed}
			In the context of Problem~\ref{pro:main} and Assumption~\ref{asume:1}, let $(\z_0,\z_{-1}) \in \HH^{2}$, let $(\alpha_n)_{n \in \N}$ be a nonnegative sequence such that $\alpha_n \to \alpha \in [0,1[$, let $(\lambda_n)_{n \in \N}$ be a sequence in $[\underline{\lambda} ,\overline{\lambda}[$ for $(\underline{\lambda},\overline{\lambda}) \in~]0,2[^2$, and consider the recurrence defined by 
			\begin{subequations}\label{eq:algoInertialRelaxed}
				\begin{align}
					&\y_n = \z_n + \alpha_n (\z_n - \z_{n-1}), \label{eq:algoInertialRelaxed_a}\\
					&\x_n = (\bm{M}_n+\bm{A})^{-1}(\bm{M}_n-\bm{C})\y_n, \label{eq:algoInertialRelaxed_b}\\
					&\w_{n+1} = \y_n - \gamma_n \bm{S}^{-1}(\bm{M}_n \y_n - \bm{M}_n\x_n), \label{eq:algoInertialRelaxed_c}\\
					&\z_{n+1} = \lambda_n \w_{n+1} + (1-\lambda_n)\y_n. \label{eq:algoInertialRelaxed_d}
				\end{align}
			\end{subequations}
		\end{algo}
		\begin{rem}
			Note that, Algorithm~\ref{algo:algoInertialRelaxed} is well defined in view of Assumption~\ref{asume:1} \cite[Proposition~3.1]{MorinBanertGiselsson2022}.
		\end{rem}
		The following proposition provides essential estimates for establishing the convergence of Algorithm~\ref{algo:algoInertialRelaxed}.
		\begin{prop}\label{prop:fejersequence}
			In the context of Problem~\ref{pro:main} and Assumption~\ref{asume:1}, let $(\z_0,\z_1)\in \HH^2$ and consider the sequence $(\z_n)_{n \in \N}$ generated by Algorithm~\ref{algo:algoInertialRelaxed}. \cblue{Suppose that $\bm{A}$ is $s$-monotone with respect to $\bm{S}$ for $s\geq 0$.} \cblue{Let $\varepsilon >0$,} \cblue{ let $\theta \in \RPP$ be such that $s\theta <1$, and,} for every $n \in \N$, define
			\begin{align}
				\cblue{r}&=\cblue{\dfrac{s\underline{\lambda}}{2}\min\{\left(1-\zeta^2-\varepsilon\right){\theta},2\gamma\},\label{eq:defr}}\\
				\nu_n &= \begin{cases}
					0, \text{ if } -(\gamma_n\bm{M}_n-\bm{S}) \text { is monotone,}\\
					2\zeta_n, \text{ otherwise.}
				\end{cases} 	\label{eq:defnun}\\
				\psi_n	&=\frac{2-\varepsilon+\nu_n-\cblue{s\theta(1-\zeta_n^2-\varepsilon)}}{1+\zeta_n^2+\nu_n},\label{eq:defpsin}\\
				\rho_n	&= \frac{\psi_n}{\lambda_n}-1,\label{eq:defrhon}\\
				\delta_n & = \cblue{(1-r)}(1-\alpha_{n})\rho_{n}-\alpha_{n+1}(1-\alpha_{n+1})\rho_{n+1}-\cblue{(1-r)}\alpha_{n+1}(1+\alpha_{n+1}).\label{eq:defdeltan}
			\end{align}
			Moreover, suppose that, for every $n \geq N \in  \N$, $\rho_n \geq 0$ and let $\varepsilon > 0$ be such that
			\begin{equation}\label{eq:cond_epsilon_prop}
				(\forall n \geq N) \quad	\varepsilon < 1 -\zeta_n^2.
			\end{equation}
			Therefore, for every \cblue{$\z \in \zer(\bm{A}+\bm{C})$ and every} $n \geq N$,
			\begin{align}
				&\|\z_{n+1}-\z\|^2_{\bm{S}}-\alpha_n\|\z_{n}-\z\|^2_{\bm{S}}+\cblue{(1-\alpha_{n})\rho_{n}}\|\z_{n+1}-\z_n\|^2_{\bm{S}}\nonumber\\
				&\leq \cblue{(1-r)}(\|\z_{n}-\z\|^2_{\bm{S}}-\alpha_n\|\z_{n-1}-\z\|^2_{\bm{S}}+\cblue{(1-\alpha_{n-1})\rho_{n-1}}\|\z_{n}-\z_{n-1}\|^2_{\bm{S}})\nonumber\\
				&\hspace{4cm}
				-\delta_{\cblue{n-1}}\|\z_{n}-\z_{\cblue{n-1}}\|^2_{\bm{S}} -\frac{\lambda_n\gamma_n}{\varepsilon}\left(2\beta\varepsilon-\gamma_n\right)\|\bm{C}\y_n-\bm{C}\z\|^2_{\bm{S}^{-1}}.\label{eq:desHn1}
			\end{align}
		\end{prop}
		\begin{proof}
			Fix $n \in \N$. Let $\z \in \zer(\bm{A}+\bm{C})$, thus, $-\bm{C}\z \in \bm{A}\z$. It follows from \cblue{\eqref{eq:algoInertialRelaxed_b}} that
			\begin{equation*}
				\bm{M}_n\y_n - \bm{M}_n\x_n- \bm{C}\y_n  \in \bm{A} \x_n. 
			\end{equation*}
			Hence, by the \cblue{$s$-strong monotonicity} of $\bm{A}$ we have
			\begin{equation}\label{eq:desmon}
				2\gamma_n\scal{\bm{M}_n\y_n - \bm{M}_n\x_n}{\x_n-\z} -  2\gamma_n\scal{ \bm{C}\y_n -\bm{C} \z }{\x_n-\z} \geq \cblue{2s \gamma_n \|\x_n-\z\|_{\bm{S}}^2}.
			\end{equation}
			Furthermore, by the \cblue{$\beta$}-cocoercivity of $\bm{C}$ with respect to $\bm{S}$ \cblue{and Young’s inequality}, we have
			\begin{align}\label{eq:descoco}
				-2\gamma_n\scal{\bm{C}\y_n - \bm{C} \z}{\x_n-\z}
				=& -2\gamma_n\scal{\bm{C}\y_n - \bm{C} \z}{\y_n-\z}-2\gamma_n\scal{\bm{C}\y_n - \bm{C} \z}{\x_n-\y_n}\nonumber\\ 
				\leq& -\frac{\gamma_n}{\varepsilon}\left(2\beta\varepsilon-\gamma_n\right)\|\bm{C}\y_n-\bm{C}\z\|^2_{\bm{S}^{-1}}+\varepsilon\|\y_n-\x_n\|^2_{\bm{S}}.
			\end{align}
			Moreover, it follows from \cblue{\eqref{eq:algoInertialRelaxed_c}} that
			\begin{align}\label{eq:desalgo1}
				\|\w_{n+1}-\z\|_{\bm{S}}^2 =&  \|\y_n - \gamma_n\bm{S}^{-1}(\bm{M}_n \y_n -\bm{M}_n\x_n)-\z\|_{\bm{S}}^2\nonumber\\
				=& \|\y_n -\z\|^2_{\bm{S}}- 2\gamma_n\scal{\y_n -\z}{\bm{M}_n \y_n -\bm{M}_n\x_n} +\gamma_n^2\|\bm{S}^{-1}(\bm{M}_n \y_n -\bm{M}_n\x_n)\|_{\bm{S}}^2.
			\end{align}	
			Note that, by summing the last two terms in \eqref{eq:desalgo1} and the first term in \eqref{eq:desmon}, by the Lipschitzian property of $\gamma_n \bm{M}_n-\bm{S}$, we obtain
			\begin{align}\label{eq:dessuma1}
				2\gamma_n&\scal{\bm{M}_n\y_n - \bm{M}_n\x_n}{\x_n-\z} - 2\gamma_n\scal{\y_n -\z}{\bm{M}_n \y_n -\bm{M}_n\x_n} +\gamma_n^2\|\bm{S}^{-1}(\bm{M}_n \y_n -\bm{M}_n\x_n)\|_{\bm{S}}^2\nonumber\\
				=&	2\gamma_n\scal{\bm{M}_n\y_n - \bm{M}_n\x_n}{\x_n-\y_n} +\gamma_n^2\|\bm{S}^{-1}(\bm{M}_n \y_n -\bm{M}_n\x_n)\|_{\bm{S}}^2\nonumber\\
				\cblue{\overset{\eqref{eq:scalSmS}}{=}}
				&	-\|\x_{n}-\y_{n}\|_{\bm{S}}^{2}+\|(\gamma_n\bm{M}_n-\bm{S})\y_n-(\gamma_n\bm{M}_n-\bm{S})\x_n)\|_{\bm{S}^{-1}}^{2} \nonumber\\
				\leq&	-(1-\zeta_n^2)\|\x_{n}-\y_{n}\|_{\bm{S}}^{2}.
			\end{align}
			By combining \eqref{eq:desmon}-\eqref{eq:dessuma1}, we deduce
			\begin{align}\label{eq:proof1}
				\|\w_{n+1}-\z\|_{\bm{S}}^2 \leq \|\y_n-\z\|_{\bm{S}}^2-&\left(1-\zeta_n^2-\varepsilon\right)\|\y_n-\x_n\|_{\bm{S}}^2-\frac{\gamma_n}{\varepsilon}\left(2\beta\varepsilon-\gamma_n\right)\|\bm{C}\y_n-\bm{C}\z\|^2_{\bm{S}^{-1}}\nonumber\\
				& -\cblue{2s\gamma_n\|\x_n-\z\|^2_{\bm{S}}}.
			\end{align}
			Now, it follows from  {\eqref{eq:algoInertialRelaxed_d}} and \eqref{eq:proof1} that
			\begin{align}\label{eq:proof2}
				\|\z_{n+1}-\z\|^2_{\bm{S}} &= \|\lambda_n \w_{n+1}+(1-\lambda_n)\y_n-\z\|^2_{\bm{S}}\nonumber\\
				&= \|(1-\lambda_n) (\y_n-\z)+\lambda_n(\w_{n+1}-\z)\|^2_{\bm{S}}\nonumber\\
				& {\overset{\eqref{eq:normaalpha}}{=}} (1-\lambda_n)\|\y_n-\z\|_{\bm{S}}^2+\lambda_n\|\w_{n+1}-\z\|^2_{\bm{S}}-\lambda_n(1-\lambda_n)\|\w_{n+1}-\y_n\|^2_{\bm{S}}\nonumber\\
				&\leq (1-\lambda_n)\|\y_n-\z\|_{\bm{S}}^2+\lambda_n\|\y_n-\z\|_{\bm{S}}^2-\lambda_n\left(1-\zeta_n^2-\varepsilon\right)\|\y_n-\x_n\|_{\bm{S}}^2\nonumber\\
				&\quad-\frac{\lambda_n\gamma_n}{\varepsilon}\left(2\beta\varepsilon-\gamma_n\right)\|\bm{C}\y_n-\bm{C}\z\|^2_{\bm{S}^{-1}} -\cblue{2s\lambda_n\gamma_n\|\x_n-\z\|_{\bm{S}}^2}- \lambda_n(1-\lambda_n)\|\w_{n+1}-\y_n\|^2_{\bm{S}}\nonumber\\
				&= \|\y_n-\z\|_{\bm{S}}^2-\lambda_n\left(1-\zeta_n^2-\varepsilon\right)\|\y_n-\x_n\|_{\bm{S}}^2-\frac{\lambda_n\gamma_n}{\varepsilon}\left(2\beta\varepsilon-\gamma_n\right)\|\bm{C}\y_n-\bm{C}\z\|^2_{\bm{S}^{-1}}\nonumber\\
				&\quad   -\cblue{2s\lambda_n\gamma_n\|\x_n-\z\|_{\bm{S}}^2}-\frac{1-\lambda_n}{\lambda_n}\|\z_{n+1}-\y_n\|^2_{\bm{S}}\nonumber\\
				&= \|\y_n-\z\|_{\bm{S}}^2-\lambda_n\cblue{(1-s\theta)}\left(1-\zeta_n^2-\varepsilon\right)\|\y_n-\x_n\|_{\bm{S}}^2-\frac{\lambda_n\gamma_n}{\varepsilon}\left(2\beta\varepsilon-\gamma_n\right)\|\bm{C}\y_n-\bm{C}\z\|^2_{\bm{S}^{-1}}\nonumber\\
				&\quad   -\lambda_n\cblue{s\theta}\left(1-\zeta_n^2-\varepsilon\right)\|\y_n-\x_n\|_{\bm{S}}^2-\cblue{2s\lambda_n\gamma_n\|\x_n-\z\|_{\bm{S}}^2}-\frac{1-\lambda_n}{\lambda_n}\|\z_{n+1}-\y_n\|^2_{\bm{S}}\nonumber\\
				&\leq \|\y_n-\z\|_{\bm{S}}^2-\lambda_n\cblue{(1-s\theta)}\left(1-\zeta_n^2-\varepsilon\right)\|\y_n-\x_n\|_{\bm{S}}^2-\frac{\lambda_n\gamma_n}{\varepsilon}\left(2\beta\varepsilon-\gamma_n\right)\|\bm{C}\y_n-\bm{C}\z\|^2_{\bm{S}^{-1}}\nonumber\\
				&\quad  -\cblue{\frac{s \lambda_n }{2}\min\{\cblue{\theta}\left(1-\zeta_n^2-\varepsilon\right),2\gamma_n\}\|\y_n-\z\|_{\bm{S}}^2}-\frac{1-\lambda_n}{\lambda_n}\|\z_{n+1}-\y_n\|^2_{\bm{S}}\nonumber\\
				&\leq \cblue{(1-r)}\|\y_n-\z\|_{\bm{S}}^2-\lambda_n\cblue{(1-s\theta)}\left(1-\zeta_n^2-\varepsilon\right)\|\y_n-\x_n\|_{\bm{S}}^2-\frac{\lambda_n\gamma_n}{\varepsilon}\left(2\beta\varepsilon-\gamma_n\right)\|\bm{C}\y_n-\bm{C}\z\|^2_{\bm{S}^{-1}}\nonumber\\
				&\quad  -\frac{1-\lambda_n}{\lambda_n}\|\z_{n+1}-\y_n\|^2_{\bm{S}},
			\end{align}
			\cblue{where the last inequality follows by noticing that $r\leq \dfrac{s \lambda_n }{2}\min\{\cblue{\theta}\left(1-\zeta_n^2-\varepsilon\right),2\gamma_n\}$.}
			\cblue{Note that if $-(\gamma_n\bm{M_n}-\bm{S})$ is monotone then 
				\begin{equation*}
					2\scal{(\gamma_n\bm{M_n}-\bm{S})\y_n-(\gamma_n\bm{M}_n-\bm{S})\x_n}{\y_n-\x_n}\leq 0.
				\end{equation*}
				Otherwise, by the Cauchy–Schwarz inequality and the $\zeta_n$-Lipschitz property of $\gamma_n \bm{M}_n - \bm{S}$
				\begin{align*}
					2\scal{(\gamma_n\bm{M_n}-\bm{S})\y_n-(\gamma_n\bm{M}_n-\bm{S})\x_n}{\y_n-\x_n}
					&\leq 2\zeta_n \|\y_n-\x_n\|^2_{\bm{S}}.
				\end{align*}	
				Hence, from \eqref{eq:defnun} we conclude that 
				\begin{equation}\label{eq:MSmono}
					2\scal{(\gamma_n\bm{M_n}-\bm{S})\y_n-(\gamma_n\bm{M}_n-\bm{S})\x_n}{\y_n-\x_n}\leq \nu_n\|\y_n-\x_n\|^2_{\bm{S}}.
			\end{equation}}
			Moreover, from \cblue{\eqref{eq:algoInertialRelaxed_d}, \eqref{eq:algoInertialRelaxed_c}, the $\zeta_n$-Lipschitz property of $\gamma_n \bm{M}_n - \bm{S}$, and \eqref{eq:MSmono}} we have
			\begin{align}\label{eq:proof3}
				\frac{1}{\lambda^2_n}\|\z_{n+1}-\y_n\|^2_{\bm{S}} & = \|\w_{n+1}-\y_n\|^2_{\bm{S}}\nonumber\\
				& = \|\gamma_n \bm{S}^{-1}(\bm{M_n}\y_n-\bm{M}_n\x_n)\|^2_{\bm{S}}\nonumber\\
				& = \| \bm{S}^{-1}((\gamma_n\bm{M_n}-\bm{S})\y_n-(\gamma_n\bm{M}_n-\bm{S})\x_n)+(\y_n-\x_n)\|^2_{\bm{S}}\nonumber\\
				& = \| (\gamma_n\bm{M_n}-\bm{S})\y_n-(\gamma_n\bm{M}_n-\bm{S})\x_n\|_{\bm{S}^{-1}}^2+\|\y_n-\x_n\|^2_{\bm{S}}\nonumber\\
				&\quad +2\scal{(\gamma_n\bm{M_n}-\bm{S})\y_n-(\gamma_n\bm{M}_n-\bm{S})\x_n}{\y_n-\x_n}\nonumber\\
				&\leq (1+\zeta_n^2+\nu_n)\|\y_n-\x_n\|^2_{\bm{S}}.
			\end{align}
			Hence, by combining \eqref{eq:proof2} with \eqref{eq:proof3}, by condition \eqref{eq:cond_epsilon_prop}, and by noticing that
			\begin{equation*}
				\frac{\cblue{(1-s\theta)}(1-\zeta_n^2-\varepsilon)}{\lambda_n(1+\zeta_n^2+\nu_n)}+\frac{(1-\lambda_n)}{\lambda_n}=\frac{\psi_n}{\lambda_n}-1=\rho_n,
			\end{equation*}
			we deduce
			\begin{align}\label{eq:proof4}
				\|\z_{n+1}-\z\|^2_{\bm{S}} &\leq \cblue{(1-r)} \|\y_n-\z\|_{\bm{S}}^2-\rho_n\|\z_{n+1}-\y_n\|_{\bm{S}}^2 -\frac{\lambda_n\gamma_n}{\varepsilon}\left(2\beta\varepsilon-\gamma_n\right)\|\bm{C}\y_n-\bm{C}\z\|^2_{\bm{S}^{-1}}.
			\end{align}
			Now, \cblue{\eqref{eq:algoInertialRelaxed_a}} yields
			\begin{align}\label{eq:proof5}
				\|\y_n-\z\|^2_{\bm{S}} &= \|\z_n+\alpha_n(\z_n-\z_{n-1})-\z\|^2_{\bm{S}}\nonumber\\
				&= \|(1+\alpha_n)(\z_n-\z)-\alpha_n(\z_{n-1}-\z)\|^2_{\bm{S}}\nonumber\\
				&\cblue{\overset{\eqref{eq:normaalpha}}{=}}(1+\alpha_n)\|\z_n-\z\|_{\bm{S}}^2-\alpha_n\|\z_{n-1}-\z\|_{\bm{S}}^2+\alpha_n(1+\alpha_n)\|\z_n-\z_{n-1}\|_{\bm{S}}^2.
			\end{align}
			Moreover,
			\begin{align}\label{eq:proof6}
				-\|\z_{n+1}-\y_n\|^2_{\bm{S}} &= -\|\z_{n+1}-\z_{n}-\alpha_n(\z_n-\z_{n-1})\|_{\bm{S}}^2\nonumber\\
				&= -\|(1-\alpha_{n})(\z_{n+1}-\z_{n})+\alpha_n(\z_{n+1}-2\z_{n}+\z_{n-1})\|_{\bm{S}}^2\nonumber\\
				& \cblue{\overset{\eqref{eq:normaalpha}}{=}}-(1-\alpha_n)\|\z_{n+1}-\z_n\|^2_{\bm{S}}+\alpha_n(1-\alpha_n)\|\z_n-\z_{n-1}\|^2_{\bm{S}}-\alpha_n\|\z_{n+1}-2\z_n+\z_{n-1}\|^2_{\bm{S}}\nonumber\\
				&\leq -(1-\alpha_n)\|\z_{n+1}-\z_n\|^2_{\bm{S}}+\alpha_n(1-\alpha_n)\|\z_n-\z_{n-1}\|^2_{\bm{S}}.
			\end{align}
			Therefore, in view of \eqref{eq:proof4}-\eqref{eq:proof6}, we deduce that
			\begin{align}
				&\|\z_{n+1}-\z\|^2_{\bm{S}}-\cblue{(1-r)}\alpha_n\|\z_{n}-\z\|^2_{\bm{S}}+(1-\alpha_{n})\rho_{n}\|\z_{n+1}-\z_n\|^2_{\bm{S}}\nonumber\\
				&\leq \cblue{(1-r)}(\|\z_{n}-\z\|^2_{\bm{S}}-\alpha_n\|\z_{n-1}-\z\|^2_{\bm{S}}+(1-\alpha_{n-1})\rho_{n-1}\|\z_{n}-\z_{n-1}\|^2_{\bm{S}})\nonumber\\
				&\quad 
				-\cblue{((1-r)(1-\alpha_{n-1})\rho_{n-1}-\alpha_n(1-\alpha_n)\rho_n-(1-r)\alpha_n(1+\alpha_n))\|\z_{n}-\z_{n-1}\|^2_{\bm{S}}}\nonumber\\
				&\quad -\frac{\lambda_n\gamma_n}{\varepsilon}\left(2\beta\varepsilon-\gamma_n\right)\|\bm{C}\y_n-\bm{C}\z\|^2_{\bm{S}^{-1}}\nonumber
			\end{align}
			and the \cblue{result} follows. 
		\end{proof}	
		\cblue{\begin{rem}\label{rem:rhopositivo}
				Let $(\rho_n)_{n \in \N}$ and $(\delta_n)_{n \in \N}$ be the sequences defined in \eqref{eq:defrhon}, \eqref{eq:defdeltan}, respectively. 
				We will derive the weak convergence of Algorithm~\ref{algo:algoInertialRelaxed} under the assumption $\liminf \delta_n = \delta >0$.  Note that this assumption ensures the hypothesis on $(\rho_n)_{n \in \N}$ stated in Proposition~\ref{prop:fejersequence}, that is, there exists $N \in \N$ such that, $\rho_n \geq 0$,  for $n \geq N$. Indeed, since $\alpha_n \to \alpha$, we have
				\begin{align*}
					(1-r)(1-\alpha) \liminf \rho_n = \liminf (1-r)(1-\alpha_n)\rho_n
					\geq \liminf \delta_n \geq \delta >0.
				\end{align*}
		\end{rem}}
		The following theorem proves the convergence of Algorithm~\ref{algo:algoInertialRelaxed} under nondecreasing inertial sequences.
		\begin{teo}\label{teo:WRL}
			In the context of Problem~\ref{pro:main} and Assumption~\ref{asume:1}, consider the sequence $(\z_{n})_{n\in\N}$ defined recursively by Algorithm~\ref{algo:algoInertialRelaxed} with initialization points $(\z_{0},\z_{-1})\in\H^{2}$.
			Let $(\nu_{n})_{n\in\N}$, $(\rho_{n})_{n\in\N}$, and $(\delta_{n})_{n\in\N}$ be the sequences defined in \eqref{eq:defnun}, and \eqref{eq:defrhon}, \eqref{eq:defdeltan}, respectively, \cblue{for $r=0$}. Suppose that, there exist $(\varepsilon,\delta) \in \RPP^2$ and $N \in \N$ such that 
			\begin{equation}\label{eq:stepsize}
				(\forall n \geq N) \quad 1-\zeta_n^2-\varepsilon >\delta \textnormal{ and }2\beta\varepsilon-\gamma_n \geq 0.
			\end{equation}
			Additionally, suppose that $(\alpha_n)_{n \geq N}$ is nondecreasing and that \begin{align}\label{eq:stepsize1}
				\liminf \delta_{n}=\delta>0.
			\end{align}
			Then, the following assertions hold:
			\begin{enumerate}
				\item\label{teo:WRL11} $\sum_{n \in \N} \|\z_{n+1}-\z_n\|^2_{\bm{S}}<+\infty$ and, for every $\z \in \zer(\bm{A}+\bm{C})$, $(\|\z_n-\z\|_{\bm{S}})_{n \in \N}$ is convergent.
				\item\label{teo:WRL12} $(\z_n)_{n \in \N}$ converges weakly to some solution to Problem~\ref{pro:main}.
			\end{enumerate}
			
		\end{teo}
		\begin{proof}	
			\begin{enumerate} \item Define, for each $n \in \N$, 
				\begin{equation}\label{eq:defHn}
					\bm{H}_n = \|\z_{n}-\z\|^2_{\bm{S}}-\alpha_{n-1}\|\z_{n-1}-\z\|^2_{\bm{S}}+\cblue{\alpha_{n-1}(1-\alpha_{n-1})}\|\z_{n}-\z_{n-1}\|^2_{\bm{S}}.
				\end{equation}
				Hence, it follows from Proposition~\ref{prop:fejersequence}, \cblue{Remark~\ref{rem:rhopositivo}}, and the nondecreasing property of $(\alpha_n)_{n \in \N}$,  that for every $n \geq N$, we have  
				\begin{equation}\label{eq:Hnonincreasing}
					\bm{H}_{n+1}\leq \bm{H}_n	-\delta_n\|\z_{n+1}-\z_{n}\|^2_{\bm{S}} -\frac{\lambda_n\gamma_n}{\varepsilon}\left(2\beta\varepsilon-\gamma_n\right)\|\bm{C}\y_n-\bm{C}\z\|^2_{\bm{S}^{-1}}.
				\end{equation}
				Therefore, in view of \eqref{eq:stepsize} and \eqref{eq:stepsize1}, we conclude that $(\bm{H}_{n})_{n \geq N}$ is nonincreasing. 
				Hence, \cblue{since $(\alpha_n)_{n \in \N}$ is nondecreasing, we deduce}
				\begin{equation}
					(\forall n \geq N) \quad \bm{H}_N \geq \bm{H}_n \geq \|\z_n-\z\|^2_{\bm{S}} -\alpha_n\|\z_{n-1}-\z\|^2_{\bm{S}} \geq \|\z_n-\z\|^2_{\bm{S}} -\alpha\|\z_{n-1}-\z\|^2_{\bm{S}}.
				\end{equation}
				Therefore, by an inductive procedure, we deduce
				\begin{align*}
					\|\z_n-\z\|^2_{\bm{S}} &\leq \alpha\|\z_{n-1}-\z\|^2_{\bm{S}} + 	\bm{H}_{N} \\
					&\leq  \alpha^{n-N}\|\z_{N}-\z\|^2_{\bm{S}} + \bm{H}_{N} \sum_{k=0}^{n-{N}-1} \alpha^k\\
					&= \alpha^{n-N}\|\z_{N}-\z\|^2_{\bm{S}} + \bm{H}_{N} \frac{1-\alpha^{n-{N}}}{1-\alpha}.\\
				\end{align*}	
				Hence, $(\z_n)_{n \in \N}$ and $(\bm{H}_{n})_{n \in \N}$ are bounded sequences. Moreover, since $(\bm{H}_{n})_{n \geq N}$ is nonincreasing, it is convergent. Now, by telescoping \eqref{eq:Hnonincreasing} we have,
				\begin{equation}
					(\forall n \geq N )	\quad \sum_{k = N}^n \delta_k \| \z_{k+1}-\z_k\|^2_{\bm{S}} \leq \bm{H}_{N}-\bm{H}_n.
				\end{equation}
				Then, by taking $n \to \infty$, we conclude that $\sum_{n \in \N}  \delta_n \| \z_{n+1}-\z_n\|^2_{\bm{S}}< + \infty$. Moreover, since $\liminf \delta_n = \delta > 0$, we conclude that \cblue{ $\sum_{n \in \N}  \| \z_{n+1}-\z_n\|^2_{\bm{S}}< + \infty$}. \cblue{By defining, for each $n \in \N$, $a_n = \|\z_n-\z\|^2_{\bm{S}}$ and $b_n = (\epsilon_{n+1}-\delta_n) \| \z_{n+1}-\z_n\|^2_{\bm{S}}+\epsilon_n\|\z_{n}-\z_{n-1}\|^2_{\bm{S}}$, since $(\epsilon_n)_{n \in \N}$ is bounded, it follows from Proposition~\ref{prop:fejersequence} and Lemma~\ref{lemmabot}} that, for every $\z \in \zer(\bm{A}+\bm{C})$, $(\|\z_n-\z\|_{\bm{S}
				})_{n \in \N}$ is convergent.
				\item \cblue{Note that, since $\gamma_n \geq \gamma$,} by \cblue{\eqref{eq:algoInertialRelaxed_c}}
				\begin{align*}
					\cblue{\gamma \|\bm{M}_{n} \y_{n}  - \bm{M}_{n} \x_{n}\|_{\bm{S}^{-1}}}	&\leq \|\cblue{\gamma_n}\bm{M}_{n} \y_{n}  - \cblue{\gamma_n} \bm{M}_{n} \x_{n}\|_{\bm{S}^{-1}} \\
					&= \|\w_{n+1}-\y_n\|_{\bm{S}}\\
					&= \frac{1}{\lambda_n}\|\z_{n+1}-\y_n\|_{\bm{S}}\\
					&\leq \frac{1}{\underline{\lambda}} (\|\z_{n+1}-\z_n\|_{\bm{S}} + \alpha_n \| \z_{n}-\z_{n-1}\|_{\bm{S}}) \to 0,
				\end{align*}
				thus, we conclude that
				\begin{equation}\label{eq:Myxto0}
					\|\bm{M}_{n} \y_{n}  - \bm{M}_{n} \x_{n}\|_{\cblue{\bm{S^{-1}}}} \to 0 \cblue{ \text{ and }  \|\gamma_n\bm{M}_{n} \y_{n}  -\gamma_n\bm{M}_{n} \x_{n}\|_{\bm{S}^{-1}} \to 0}.
				\end{equation}	
				On the other hand,
				\begin{align*}
					\| \y_{n}  -\x_{n}\|_{\bm{S}} &\leq  \| (\gamma_n\bm{M}_n-\bm{S})\y_{n}  -(\gamma_n\bm{M}_n-\bm{S})\x_{n}\|_{\bm{S}^{-1}} + \|\gamma_n\bm{M}_n\y_{n}  -\gamma_n\bm{M}_n\x_{n}\|_{\bm{S}^{-1}}\\
					&\leq \zeta_n \|\y_n-\x_n\|_{\bm{S}}+\|\gamma_n\bm{M}_n\y_{n}  -\gamma_n\bm{M}_n\x_{n}\|_{\bm{S}^{-1}}.
				\end{align*}
				Therefore, \cblue{since $\zeta_n < \zeta < 1$},
				\begin{equation}\label{eq:yxto0}
					(1-\zeta)	\| \y_{n}  -\x_{n}\|_{\bm{S}} \leq \|\gamma_n\bm{M}_n\y_{n}  -\gamma_n\bm{M}_n\x_{n}\|_{\bm{S}^{-1}}\to 0.
				\end{equation}
				Furthermore, by the $\beta$-cocoercivity of $\bm{C}$, we conclude
				\begin{equation}\label{eq:Cyxto0}
					\|\bm{C}_{n} \y_{n}  - \bm{C}_{n} \x_{n}\|_{S^{-1}} \leq \frac{1}{\beta}\| \y_{n}  -\x_{n}\|_{\bm{S}}\to 0.
				\end{equation}	
				Now, let $\z^*$ be a weak cluster point of $(\z_n)_{n \in \N}$ and let $(\z_{n_k})_{k \in \N}$ be a subsquence such that $\z_{n_k} \weak \z^*$. We have, $\y_{n_k} = \z_{n_{k}} + \alpha_{n_{k}} (\z_{n_k}-\z_{n_{k}+1} ) \weak \z^*$, thus, by \eqref{eq:yxto0}, $\x_{n_k} \cblue{\weak} \z^*$.
				Therefore, it follows from \cblue{\eqref{eq:algoInertialRelaxed_b}} that 
				\begin{equation}\label{eq:contA2}
					(\bm{M}_{n_k} \y_{n_k}  - \bm{M}_{n_k} \x_{n_k} ) - (\bm{C} \y_{n_k}-\bm {C}\x_{n_k}) \in (\bm{A} + \bm {C})\x_{n_k}.
				\end{equation}
				Since $\bm{C}$ is cocoercive, $\bm{A}+\bm{C}$ 
				is a maximally monotone operator \cite[Corollary~25.5]{bauschkebook2017}. Hence, from the weak-strong closedness of $\gra(\bm{A}+\bm{C})$  \cite[Proposition~20.38]{bauschkebook2017}, \eqref{eq:Myxto0}, \eqref {eq:Cyxto0}, and \eqref{eq:contA2}, we conclude that every weak cluster point of $\left(\z_{n}\right)_{n\in \N}$ belongs to $\zer (\bm{A}+\bm{C})$. The \cblue{result} follows by applying \cite[Lemma 2.47]{bauschkebook2017} in the space $(\HH,\scal{\cdot}{\cdot}_{\bm{S}})$.
			\end{enumerate}
		\end{proof}
		\begin{rem}\label{rem:MS}\begin{enumerate}
				\item By consider $\bm{M}_n = \frac{1}{\gamma_n}\bm{S}$, according to \eqref{eq:defnun}, $\zeta_n\equiv\nu_n\equiv0$ and Algorithm~\ref{algo:algoInertialRelaxed} reduces to the inertial and relaxed version of the classical forward-backward algorithm. If $\gamma_n \equiv \gamma \in \RPP$,  $\lambda_n \to \lambda \in ]0,2[$, \cblue{$r=0$}, and we set $\varepsilon = \gamma/2\beta$, \eqref{eq:stepsize1} is equivalent to
				\begin{equation*}
					\frac{(1-\alpha)^2}{\lambda}\left(2-\lambda-\frac{\gamma}{2\cblue{\beta}}\right)-\alpha(1+\alpha) >0
				\end{equation*}
				which corresponds with the condition in \cite[Corollary~3.12]{AttouchCabot19}. Furthermore, if $\lambda = 1$, it reduces to
				\begin{equation}\label{eq:desaFB}
					1-3\alpha-\frac{\gamma(1-\alpha)^2}{2\cblue{\beta}} >0,
				\end{equation}
				which is the condition proposed in \cite[Remark~3]{LorenzInFB2015}.
				\item The possibility of adding inertia in NFB in the case where $\bm{C} = 0$ and $\bm{S} = \id$ was discussed in \cite[Remark 4.4(ii)]{BuiCombettesWarped2020}. However, the conditions that guarantee convergence were not derived.
			\end{enumerate}
		\end{rem}
		\cblue{The nondecreasing property of $(\alpha_n)_{n \in \N}$ is crucial for proving that $(\bm{H}_n)_{n \in \N}$, defined in \eqref{eq:defHn}, is nonincreasing, which in turn allows us to establish the convergence of Algorithm~\ref{algo:algoInertialRelaxed}.}
		
		The next theorem ensures the convergence of Algorithm~\ref{algo:algoInertialRelaxed} for a decreasing sequence of inertial parameters $(\alpha_n)_{n \in \N}$ satisfying $\sum_{n \in \N} (\alpha_n - \alpha) < +\infty$\cblue{, for an adequate $\alpha \in ]0,1[$. Hence, the inertial step $\y_n$ can be interpreted as an $\ell^1$-type approximation of the inertial step generated with an admissible parameter $\alpha$. In this case, the term $\bm{K}_n$, defined in \eqref{eq:defKn}, is not Fejér monotone but quasi Fejér monotone, i.e., admits an $\ell^1$-type error (see \cite[Section~5.4]{bauschkebook2017}).} Although this result may appear weaker than Theorem~\ref{teo:WRL}, it allows a practical acceleration of convergence (see Section~\ref{se:NE}), as it permits larger inertial parameters during the initial iterations. To the best of our knowledge, this is the first approach that considers a decreasing sequence of inertial parameters.
		\begin{teo}\label{teo:WRL2}
			In the context of Problem~\ref{pro:main} and Assumption~\ref{asume:1}, consider the sequence $(\z_{n})_{n\in\N}$ defined recursively by Algorithm~\ref{algo:algoInertialRelaxed} with initialization points $(\z_{0},\z_{-1})\in\H^{2}$.
			Let $(\nu_{n})_{n\in\N}$, $(\rho_{n})_{n\in\N}$, and $(\delta_{n})_{n\in\N}$ be the sequences defined in \eqref{eq:defnun}, \eqref{eq:defrhon}, and \eqref{eq:defdeltan}, respectively, \cblue{for $r=0$}. Suppose that, there exist $(\varepsilon,\delta) \in \RPP^2$ and $N \in \N$ such that 
			\begin{equation}\label{eq:stepsize2}
				(\forall n \geq N) \quad 1-\zeta_n^2-\varepsilon >\delta \textnormal{ and }2\beta\varepsilon-\gamma_n \geq 0.
			\end{equation}
			Additionally, suppose that $(\alpha_n)_{n \geq N}$ is decreasing, that $\sum_{n \in\N} (\alpha_n-\alpha) < +\infty$,  and that \begin{align}\label{eq:stepsize12}
				\liminf \delta_{n}=\delta>0.
			\end{align}
			Then, the following assertions hold:
			\begin{enumerate}
				\item $\sum_{n \in \N} \|\z_{n+1}-\z_n\|^2_{\bm{S}}<+\infty$ and, for every $\z \in \zer(\bm{A}+\bm{C})$, $(\|\z_n-\z\|_{\bm{S}})_{n \in \N}$ is convergent.
				\item $(\z_n)_{n \in \N}$ converges weakly to some solution to Problem~\ref{pro:main}.
			\end{enumerate}
		\end{teo}
		\begin{proof}
			\begin{enumerate}
				\item For every $n \in \N$, define 
				\begin{equation}\label{eq:defKn}
					\bm{K}_{n} = \|\z_{n}-\z\|^2_{\bm{S}}-\alpha\|\z_{n-1}-\z\|_{\bm{S}}^2+\cblue{(1-\alpha_{n-1})\rho_{n-1}}\|\z_{n}-\z_{n-1}\|^2_{\bm{S}}.
				\end{equation} 
				First, suppose that $\alpha = 0$\cblue{, thus $\sum_{n \in \N} \alpha_n < + \infty$ and $\inf_{n \in \N} \bm{K}_n \geq 0$}. Then Proposition~\ref{prop:fejersequence}, \cblue{Remark~\ref{rem:rhopositivo}}, and \eqref{eq:stepsize2} yield
				\begin{align*}
					(\forall n \geq N) \quad	\bm{K}_{n+1} &\leq \bm{K}_n + \alpha_n \|\z_n-\z\|^2_{\bm{S}}-\alpha_n\|\z_{n-1}-\z\|^2_{\bm{S}}- \delta_{n-1} \|\z_{n}-\z_{n-1}\|^2_{\bm{S}}\\
					&\leq (1+\alpha_n)\bm{K}_n - \delta_{n-1} \|\z_{n}-\z_{n-1}\|^2_{\bm{S}}.
				\end{align*}
				Hence, in view of \cblue{Lemma~\ref{lemmaquasifej} applied to $a_n = \bm{K}_n$, $t_n = \alpha_n$, and $b_n = \delta_{n-1}\|\z_{n}-\z_{n-1}\|^2_{\bm{S}}$}, we conclude that $(\bm{K}_n)_{n \in \N}$ is convergent and $\sum_{n \in \N} \delta_n \|\z_{n+1}-\z_n\|^2_{\bm{S}}<+\infty$. Since $\liminf \delta_n = \delta >0$, we deduce that $\sum_{n \in \N}\|\z_{n+1}-\z_n\|^2_{\bm{S}} <+\infty$. Then, \cblue{since $((1-\alpha_n)\rho_n)_{n \in \N}$ is bounded and $\alpha = 0$}, by the convergence of $(\bm{K}_n)_{n \in \N}$ and \eqref{eq:defKn}, we have that $(\|\z_n-\z\|_{\bm{S}})_{n \in \N}$ is convergent for every $\z \in \zer (\bm{A}+\bm{C})$.
				
				Now, suppose that $\alpha > 0$,. \cblue{Since $\alpha <1$}, it follows from Proposition~\ref{prop:fejersequence} and \eqref{eq:stepsize2} that, for every $n \geq N$,
				\begin{align}\label{eq:desKn}
					\bm{K}_{n+1} &\leq \bm{K}_n + (\alpha_n-\alpha) \|\z_n-\z\|^2_{\bm{S}}-(\alpha_n-\alpha)\|\z_{n-1}-\z\|^2_{\bm{S}}- \delta_{n-1} \|\z_{n}-\z_{n-1}\|^2_{\bm{S}}\nonumber\\
					&\leq \bm{K}_n + \frac{(\alpha_n-\alpha)}{\alpha} \|\z_n-\z\|^2_{\bm{S}}-\alpha\frac{(\alpha_n-\alpha)}{\alpha}\|\z_{n-1}-\z\|^2_{\bm{S}}- \delta_{n-1} \|\z_{n}-\z_{n-1}\|^2_{\bm{S}}\nonumber\\
					&\leq \left(1+\frac{\alpha_n-\alpha}{\alpha}\right)\bm{K}_n  - \delta_{n-1} \|\z_{n}-\z_{n-1}\|^2_{\bm{S}}.
				\end{align}
				If there exist $n_0\geq N$ such that $\bm{K}_{n_0} \leq 0$, it follows from \eqref{eq:desKn} that $\bm{K}_n \leq 0$, for every $n \geq n_0$. Therefore, we have
				\begin{align*}
					(\forall n \geq n_0) \quad \bm{K}_n \leq 0 \Leftrightarrow (\forall n \geq n_0) \quad \|\z_{n+1}-\z\|^2_{\bm{S}}&\leq \alpha\|\z_n-\z\|_{\bm{S}}^2-\cblue{(1-\alpha_{n-1})\rho_{n-1}}\|\z_{n}-\z_{n-1}\|^2_{\bm{S}}\\
					&\leq\|\z_n-\z\|_{\bm{S}}^2-\cblue{(1-\alpha_{n-1})\rho_{n-1}}\|\z_{n}-\z_{n-1}\|^2_{\bm{S}}.
				\end{align*}
				Hence, in view of \cblue{Lemma~\ref{lemmaquasifej} applied to $a_n = \|\z_n-\z\|_{\bm{S}}^2$, $t_n \equiv 0$, and $b_n = (1-\alpha_{n-1})\rho_{n-1}\|\z_{n}-\z_{n-1}\|^2_{\bm{S}}$}, we conclude that, for every $\z \in \zer (\bm{A}+\bm{C})$, $(\|\z_n-\z\|_{\bm{S}})_{n \in \N}$ is convergent and $\sum_{n \in \N} (1-\alpha_{n})\rho_{n} \|\z_{n+1}-\z_n\|^2_{\bm{S}}<+\infty$. Since $\liminf (1-\alpha_{n})\rho_{n} \geq \delta >0$, we deduce that $\sum_{n \in \N}\|\z_{n+1}-\z_n\|^2_{\bm{S}}<+\infty$.
				
				On the other hand, if $\bm{K}_n \geq 0$, for every $n \geq N$,
				by \eqref{eq:desKn} and invoking again
				\cblue{Lemma~\ref{lemmaquasifej} applied  to $a_n = \bm{K}_n$, $t_n = (\alpha_n-\alpha)\alpha^{-1}$, and $b_n = \delta_{n-1}\|\z_n-\z_{n-1}\|^2_{\bm{S}}$}, we conclude that $(\bm{K}_n)_{n \in \N}$ is convergent and $\sum_{n \in \N} \delta_n \|\z_{n+1}-\z_n\|^2_{\bm{S}}<+\infty$. Since $\liminf \delta_n = \delta >0$, we deduce that $\sum_{n \in \N} \|\z_{n+1}-\z_n\|^2_{\bm{S}}<+\infty$. 
				By defining $b_n =  \frac{\alpha_n-\alpha}{\alpha} \bm{K}_n + \cblue{(1-\alpha_{n-1})\rho_{n-1}}\|\z_{n}-\z_{n-1}\|^2_{\bm{S}}$, it follows from \eqref{eq:defKn} and \eqref{eq:desKn} that, for every $n \geq N$,
				\begin{align}\label{eq:desalpham}
					\|\z_{n+1}-\z\|^2_{\bm{S}}&\leq \cblue{\|\z_n-\z\|^2_{\bm{S}}} + \alpha(\|\z_{n}-\z\|^2_{\bm{S}}- \|\z_{n-1}-\z\|_{\bm{S}}^2)+\cblue{((1-\alpha_{n-1})\rho_{n-1}-\delta_{n-1})}\|\z_{n}-\z_{n-1}\|^2_{\bm{S}}\nonumber\\
					&\quad +\frac{\alpha_n-\alpha}{\alpha} \bm{K}_n- \cblue{(1-\alpha_{n})\rho_{n}} \|\z_{n+1}-\z_n\|^2_{\bm{S}}\nonumber\\
					&\leq\cblue{\|\z_n-\z\|^2_{\bm{S}}}+ \alpha(\|\z_{n}-\z\|^2_{\bm{S}}- \|\z_{n-1}-\z\|_{\bm{S}}^2)+b_n.
				\end{align}
				Now, since  $\sum_{n \in\N} (\alpha_n-\alpha) < +\infty$, $\sum_{n \in\N} \|\z_{n+1}-\z_{n}\|^2_{\bm{S}} < +\infty$,  $(\bm{K}_n)_{n \in \N}$ \cblue{ is convergent (therefore bounded),  and $((1-\alpha_{n})\rho_{n})_{n \in \N}$ is bounded}, we deduce $\sum_{n \in \N} b_n< + \infty$. 
				Therefore, by \cblue{Lemma~\ref{lemmabot} applied to $a_n = \|\z_n-\z\|_{\bm{S}}^2$}, we conclude that $ (\|\z_{n}-\z\|^2_{\bm{S}})_{n \in \N}$ is convergent for every $\z \in \zer (\bm{A}+\bm{C})$.		
				\item By the convergence of $(\|\z_n-\z\|_{\bm{S}})_{n \in \N}$, for every $\z \in \zer (\bm{A}+\bm{C})$, and since $\sum_{n \in \N}\|\z_{n+1}-\z_n\|^2_{\bm{S}}<+\infty$, the proof of this assertion is analogous to the proof of Theorem~\ref{teo:WRL}\eqref{teo:WRL12}.
			\end{enumerate}
		\end{proof}
		
		\subsection{Linear Convergence}
		In this section, we prove the linear convergence of Algorithm~\ref{algo:algoInertialRelaxed} in the case where $\bm{A}$ is $s$ strongly monotone for $s > 0$. In this analysis, for simplicity, we suppose that the sequence $(\rho_n)_{n \in \N}$ converges to some $\rho \in \RPP$. The following proposition yields a fundamental estimate for proving the strong convergence of our algorithm.
		\begin{prop}\label{prop:Hboundedbelow}
			In the context of Problem~\ref{pro:main} and Assumption~\ref{asume:1}, consider the sequence $(\z_{n})_{n\in\N}$ defined recursively by Algorithm~\ref{algo:algoInertialRelaxed} with initialization points $(\z_{0},\z_{-1})\in\H^{2}$.
			Let $(\nu_{n})_{n\in\N}$, $(\rho_{n})_{n\in\N}$, and $(\delta_{n})_{n\in\N}$ be the sequences defined in \eqref{eq:defnun}, \eqref{eq:defrhon}, and \eqref{eq:defdeltan}, respectively. Suppose that $\rho_n \to \rho \in \RPP$, that $\alpha_n \to \alpha \in \RPP$, and that
			\begin{align}\label{eq:stepsizelimite0}
				\delta:=(1-r)(1-\alpha)\rho-\alpha(1-\alpha)\rho-\alpha(1+\alpha) >0.
			\end{align}
			Let $(\bm{H}_n)_{n \in \N}$ and $(\bm{K}_n)_{n \in \N}$ be the sequences defined in \eqref{eq:defHn} and \eqref{eq:defKn}, respectively. Then, there exists $N \in \N$ such that, for every $n \geq N$, the following statements hold:
			\begin{enumerate}
				\item \label{prop:Hboundedbelow1} There exists $\epsilon_1 \in \RPP$ such that
				\begin{equation*}
					\delta + r(1-\alpha)\rho+\alpha(1-\alpha)\rho+\alpha^2+\epsilon_1 \geq (1-\alpha_n)\rho_n-\alpha \geq \delta+ r(1-\alpha)\rho +\alpha(1-\alpha)\rho+\alpha^2-\epsilon_1>0.
				\end{equation*}     
				\item \label{prop:Hboundedbelow2}There exists $\epsilon_2 \in \RPP$ such that
				\begin{equation*}
					\rho_n(1-\alpha_n)(1-\alpha)-\alpha \geq  \delta + \alpha^2-\epsilon_2   >0.
				\end{equation*} 
				\item \label{prop:Hboundedbelow3} $\liminf \delta_n \geq \delta$.
				\item \label{prop:Hboundedbelow4}  	 $\bm{K}_{n+1}
				\geq \left( \dfrac{\delta+\alpha^2-\epsilon_2}{\delta+ r(1-\alpha)\rho+\alpha(1-\alpha)\rho+\alpha^2+\epsilon_1}\right)\|\z_{n+1}-\z\|^2_{\bm{S}}.$
				\item \label{prop:Hboundedbelow5} $
				\bm{H}_{n+1}
				\geq \left( \dfrac{\delta+\alpha^2-\epsilon_2}{\delta+ r(1-\alpha)\rho+\alpha(1-\alpha)\rho+\alpha^2+\epsilon_1}\right)\|\z_{n+1}-\z\|^2_{\bm{S}}.$
			\end{enumerate}
		\end{prop}
		\begin{proof}
			\begin{enumerate}
				\item  Since $\rho_n \to \rho$ and $\alpha_n \to \alpha$, the result follows by noticing that
				\begin{align*}
					(1-\alpha_n)\rho_n - \alpha \to (1-\alpha)\rho-\alpha = \delta+ r(1-\alpha)\rho +\alpha(1-\alpha)\rho+\alpha^2>0.
				\end{align*}   
				\item Analogously, we have 
				\begin{align*}
					\rho_n(1-\alpha_n)(1-\alpha) - \alpha \to \rho(1-\alpha)^2-\alpha &=(1-\alpha) \rho - \alpha(1-\alpha)\rho-\alpha\\
					&\geq (1-r)(1-\alpha) \rho - \alpha(1-\alpha)\rho-\alpha \\
					&= \delta + \alpha^2.
				\end{align*}
				The result follows.
				\item It follows directly from
				\begin{align*}
					\liminf \delta_n = (1-r)(1-\alpha)\rho-\alpha(1-\alpha)\rho-(1-r)\alpha(1+\alpha) \geq \delta.
				\end{align*}
				\item 	
				Fix $n \in \N$, let $\kappa_n \in \RPP$. By \eqref{eq:defKn} and Young's inequality, we have
				\begin{align*}
					\bm{K}_{n+1}  &=\|\z_{n+1}-\z\|^2_{\bm{S}}-\alpha\|\z_{n}-\z\|^2_{\bm{S}}+(1-\alpha_{n})\rho_{n}\|\z_{n+1}-\z_n\|^2_{\bm{S}}\\
					&\geq \|\z_{n+1}-\z\|^2_{\bm{S}}-\alpha\left(1+\frac{1}{\kappa_n}\right)\|\z_{n+1}-\z\|^2_{\bm{S}}-\alpha(1+\kappa_n)\|\z_{n+1}-\z_n\|^2_{\bm{S}}+(1-\alpha_{n})\rho_{n}\|\z_{n+1}-\z_n\|^2_{\bm{S}}\\
					&\geq \left(1-\alpha\left(1+\frac{1}{\kappa_n}\right)\right)\|\z_{n+1}-\z\|^2_{\bm{S}}+((1-\alpha_{n})\rho_{n}-\alpha(1+\kappa_n))\|\z_{n+1}-\z_n\|^2_{\bm{S}}.
				\end{align*}
				Choosing $\kappa_n$ such that $((1-\alpha_{n})\rho_{n}-\alpha(1+\kappa_n)) = 0$, we obtain $\kappa_n = \frac{(1-\alpha_n)\rho_n-\alpha}{\alpha}$. Note that, in view of \ref{prop:Hboundedbelow1}, for $n \geq N$, we have $\kappa_n \geq \frac{\delta+ r(1-\alpha)\rho+\alpha(1-\alpha)\rho+\alpha^2-\epsilon_1}{\alpha}>0$, thus, it is well defined. Hence, by \ref{prop:Hboundedbelow1} and \ref{prop:Hboundedbelow2}
				\begin{align}
					\bm{K}_{n+1}  
					&\geq  \left(1-\alpha\left(1+\frac{1}{\kappa_n}\right)\right)\|\z_{n+1}-\z\|^2_{\bm{S}}\nonumber\\
					&=\left(\frac{\rho_n(1-\alpha_n)(1-\alpha)-\alpha}{\rho_n(1-\alpha_n)-\alpha}\right)\|\z_{n+1}-\z\|^2_{\bm{S}}\label{eq:remarkliminf}\\
					& \geq \left( \frac{\delta+\alpha^2-\epsilon_2}{\delta+ r(1-\alpha)\rho+\alpha(1-\alpha)\rho+\alpha^2+\epsilon_1}\right)\|\z_{n+1}-\z\|^2_{\bm{S}}.\nonumber
				\end{align}
				\item By the nondecreasing property of $(\alpha_n)_{n \in \N}$, for every $n \in \N$, we have $\alpha_n \leq \alpha$. Hence, the result follows directly from \ref{prop:Hboundedbelow4} noticing that $\bm{H}_n \geq \bm{K}_{n}$.
			\end{enumerate}
		\end{proof}
		\begin{teo}\label{teo:WRL3}
			In the context of Problem~\ref{pro:main} and Assumption~\ref{asume:1}, consider the sequence $(\z_{n})_{n\in\N}$ defined recursively by Algorithm~\ref{algo:algoInertialRelaxed} with initialization points $(\z_{0},\z_{-1})\in\H^{2}$.
			Let $(\nu_{n})_{n\in\N}$, $(\rho_{n})_{n\in\N}$, and $(\delta_{n})_{n\in\N}$ be the sequences defined in \eqref{eq:defnun}, \eqref{eq:defrhon}, and \eqref{eq:defdeltan}, respectively. Suppose that, there exists $(\varepsilon,\delta) \in \RPP^2$ and $N \in \N$ such that 
			\begin{equation}\label{eq:stepsize3}
				(\forall n \geq N) \quad 1-\zeta_n^2-\varepsilon >\delta \textnormal{ and }2\beta\varepsilon-\gamma_n \geq 0.
			\end{equation}
			Additionally, suppose that $\rho_n \to \rho \in \RPP$, that $\alpha_n \to \alpha \in \RPP$,  that
			\begin{align}\label{eq:stepsizelimite}
				\delta:=(1-r)(1-\alpha)\rho-\alpha(1-\alpha)\rho-\alpha(1+\alpha) >0,
			\end{align}   
			and that one of the following assertions holds
			\begin{enumerate}
				\item\label{teo:WRL31} $(\alpha_n)_{n \in \N}$ is nondecreasing.
				\item\label{teo:WRL32} $(\alpha_n)_{n \in \N}$ is decreasing and $\sum_{n \in \N} (\alpha_n-\alpha)<+\infty$.
			\end{enumerate}
			Then, if $\bm{A}$ is $s$-strongly monotone with respect to $\bm{S}$, for some $s \in \RPP$, $(\z_n)_{n \in \N}$ converges strongly, at linear rate, to the unique solution of Problem~\ref{pro:main}.
		\end{teo}
		\begin{proof}
			Let us assume that $\bm{A}$ is $s$-strongly monotone with respect to $\bm{S}$ for $s \in \RPP$.
			\begin{enumerate}
				\item First, suppose that $(\alpha_n)_{n \in \N}$ is nondecreasing. It follows from Proposition~\ref{prop:fejersequence} that for $(\bm{H}_n)_{n \in \N}$ defined in \eqref{eq:defHn} and all $n \geq N$, we have
				\begin{equation}
					\bm{H}_{n+1} \leq (1-r)\bm{H}_{n},
				\end{equation}
				where $r>0$ is defined in \eqref{eq:defr}. Therefore, $(\bm{H}_{n})_{n \in \N}$ converges to $0$ at linear rate.
				If $\alpha = 0$, since $(\alpha_n)_{n \in \N}$ is nondecreasing, we have $\alpha_n \equiv 0$. Hence, $\bm{H}_n \geq \|\z_{n}-\z\|^2_{\bm{S}}$ and we conclude that $(\|\z_n-\z\|_{\bm{S}})_
				{n \in \N}$ converges to $0$ at linear rate. Otherwise, if $\alpha >0$,  $(\|\z_n-\z\|_{\bm{S}})_
				{n \in \N}$ converges to $0$ at linear rate in view of Proposition~\ref{prop:Hboundedbelow}\eqref{prop:Hboundedbelow5}.
				\item Suppose now that  $(\alpha_n)_{n \in \N}$ is decreasing and $\sum_{n \in \N} (\alpha_n-\alpha)<+\infty$. If $\alpha =0$, from Proposition~\ref{prop:fejersequence} it follows that, for $(\bm{K}_n)_{n \in \N}$ defined in \eqref{eq:defKn} and every $n \geq N$,
				\begin{align*}
					\bm{K}_{n+1} &\leq (1-r)\bm{K}_n + \alpha_n \|\z_n-\z\|^2_{\bm{S}}-(1-r)\alpha_n\|\z_{n-1}-\z\|^2_{\bm{S}}-\delta_{n-1}\|\z_{n}-\z_{n-1}\|^2_{\bm{S}}\\
					&\leq (1-r+\alpha_n)\bm{K}_n.
				\end{align*}
				By Lemma~\ref{lemmaPesquet} we conclude that $(\bm{K}_n)_{n \in \N}$ converges linearly to $0$ and the result follows by noticing that $\bm{K}_n \geq \|\z_n-\z\|^2_{\bm{S}}$. On the other hand, if $\alpha > 0$, by Proposition~\ref{prop:fejersequence}, we deduce
				\begin{align*}
					\bm{K}_{n+1} &\leq (1-r)\bm{K}_n + (\alpha_n-\alpha) \|\z_n-\z\|^2_{\bm{S}}-(1-r)(\alpha_n-\alpha)\|\z_{n-1}-\z\|^2_{\bm{S}}-\delta_{n-1}\|\z_{n}-\z_{n-1}\|^2_{\bm{S}}\nonumber\\
					&\leq (1-r)\bm{K}_n + \frac{(\alpha_n-\alpha)}{\alpha} \|\z_n-\z\|^2_{\bm{S}}-(\alpha_n-\alpha)\|\z_{n-1}-\z\|^2_{\bm{S}}\\
					&\leq \left(1-r+\frac{\alpha_n-\alpha}{\alpha}\right)\bm{K}_n.
				\end{align*}
				The result follows by combining  Proposition~\ref{prop:Hboundedbelow}\eqref{prop:Hboundedbelow3} and Lemma~\ref{lemmaPesquet}.
			\end{enumerate}
		\end{proof}
		\begin{rem}
			For simplicity, we assume condition \eqref{eq:stepsizelimite} on the parameters $(\alpha_n)_{n \in \N}$ and $(\rho_n)_{n \in \N}$ where $\rho_n \to \rho$. However, in view of \eqref{eq:remarkliminf}, this requirement can be relaxed to $\liminf \delta_n = \delta$ and \begin{equation*}
				\liminf \left(\frac{\rho_n(1-\alpha_n)(1-\alpha)-\alpha}{\rho_n(1-\alpha_n)-\alpha}\right) >0.
			\end{equation*}
		\end{rem}
		
		\subsection{Analyzing the inertial and relaxation parameters}
		
		In this section, we provide conditions in order to the sequences of inertial and relaxation parameters to fulfill the hypotheses required to ensure the convergence of Algorithm~\ref{algo:algoInertialRelaxed}. \cblue{First}, let us recall that the sequences $(\alpha_n)_{n \in \N}$, $(\lambda_n)_{n \in \N}$, and $(\gamma_n)_{n \in \N}$ are design parameters of the algorithm and can therefore be tuned by the user. In contrast, $(\rho_n)_{n \in \N}$ and $(\delta_n)_{n \in \N}$ depend on $(\zeta_n)_{n \in \N}$, which are the Lipschitz constants of the operators  $\gamma_n \bm{M}_n-\bm{S}$ \cblue{ and on $(\alpha_n)_{n \in \N}$ and $(\lambda_n)_{n \in \N}$.
			The key parameter conditions ensuring convergence of the algorithm, as stated in Proposition~\ref{prop:fejersequence}, Theorem~\ref{teo:WRL}, and Theorem~\ref{teo:WRL2}, are}
		\begin{equation}\label{eq:condpsi-delta}
			\cblue{(\exists N \in \N)(\forall n \geq N) \quad \rho_n \geq 0 \quad \textnormal{ and } \quad 	\liminf_{n \to +\infty}\delta_{n}=\delta>0.}
		\end{equation}
		Now we will determine suitable $(\alpha_n)_{n \in \N}$ and $(\lambda_n)_{n \in \N}$ to ensure that these requirements are satisfied. We split our analysis in two cases: when $\bm{A}$ is merely monotone $(s =0)$ and when it is strongly monotone $(s>0)$.
		
		\subsubsection{$\bm{A}$ is a monotone operator.}
		
		\begin{claim}\label{claim:first_r0} Condition $\rho_n \geq 0$ is satisfied,  if $\varepsilon < 1-\zeta_n^2$ and $\lambda_n \in \left[\underline{\lambda},\psi_n\right]$, where $\psi_n \in ]1,2[$ for all $n \in \N$.
		\end{claim}
		\begin{proof} According to \eqref{eq:defrhon}, $\rho_n = {\psi_n}/{\lambda_n}-1$.  Since $\lambda_n$ is a positive number, $\rho_n \geq 0$ is fulfilled, if and only if, $\psi_n \geq \lambda_n$. Note that \eqref{eq:cond_epsilon_prop} imposes $\varepsilon < 1-\zeta_n^2$, thus, since $\nu_n \geq 0$ (see \eqref{eq:defnun}) and by \eqref{eq:defpsin}, we deduce
			\begin{equation}\label{eq:psimen1}
				\varepsilon < 1 - \zeta_n^2 \Longleftrightarrow \psi_n	= \frac{2-\varepsilon+\nu_n}{1+\zeta_n^2+\nu_n}>1  .
			\end{equation}
			According to Algorithm~\ref{algo:algoInertialRelaxed}, there exists $\underline{\lambda} \in~]0,2[$ such that $\lambda_n \geq  \underline{\lambda}$. Hence, 
			\begin{equation}\label{eq:intervalolamn}
				\lambda_n \in \left[ \underline{\lambda}, \psi_n \right] = \left[ \underline{\lambda}, \dfrac{2-\varepsilon+\nu_n}{1+\zeta_n^2+\nu_n} \right].
			\end{equation}
			In view of \eqref{eq:defnun} $\nu_n=0$, or $\nu_n = 2\zeta_n$. When $\nu_n =0$, as $\zeta_n \in [0,1[$, we have
			\begin{equation}\label{eq:psimay21}
				2 - \frac{\varepsilon}{2}> \frac{2}{1+\zeta_n^2} -\frac{\varepsilon}{1+\zeta_n^2} =  \psi_n.
			\end{equation}
			On the other hand, if $\nu_n = 2\zeta_n$, we deduce  
			\begin{equation}\label{eq:psimay22}	2 - \frac{\varepsilon}{4} > \frac{2(1+\zeta_n)}{(1+\zeta_n)^2} -\frac{\varepsilon}{(1+\zeta_n)^2} =   \psi_n.
			\end{equation}
			Thus, \eqref{eq:psimen1}, \eqref{eq:psimay21}, and \eqref{eq:psimay21} gives $\psi_n \in~ ]1,2[$. 
		\end{proof}
		\begin{rem} Notice that the case $\nu_n = 0$ includes,  the choice $\bm{M}_n = \frac{1}{\gamma_n}\bm{S}$ presented in Remark \ref{rem:MS}. Additional examples where $\nu_n = 0$ will be  presented in Section~\ref{sec:partcases}.
		\end{rem}
		We study the following condition in a simplified framework. In the context of Problem~\ref{pro:main}, Assumption~\ref{asume:1}, and Algorithm~\ref{algo:algoInertialRelaxed}, suppose that $\lambda_n \to \lambda \in ]0,2[$, $\zeta_n \to \zeta \in [0,1[$, and $\nu_n \to \nu \in \{0,2\zeta\}$. Then, in this case, $\rho_n \to \psi/\lambda -1$ where 
		\begin{equation}\label{eq:defpsi}
			\psi = \frac{2-\varepsilon+\nu}{1+\zeta^2+\nu}.
		\end{equation}
		\begin{claim}\label{claim:deltan} Assume that $1-\zeta^2-\varepsilon>0$  and $2\beta\varepsilon \geq \gamma$. Condition $\liminf \delta_{n} >0$ is satisfied in each of the following cases:
			\begin{enumerate}
				\item\label{claim:deltan1} For $\alpha \in [0,1[$, we have
				\begin{equation}\label{eq:lambda}
					\lambda \in \left]0, \psi \cdot\phi(\alpha) \right[, \quad \text{where} \quad \phi(\alpha) =  \dfrac{(1-\alpha)^2}{2\alpha^2-\alpha+1}.
				\end{equation}
				In particular, $\phi (\alpha) \in [0,1[$ and $\lambda \in~]0,2[$.
				\item\label{claim:deltan2} For $\lambda \in~]0,\psi[$, we have
				\begin{equation}\label{eq:alphaintervalo}
					\alpha \in [0,\overline{\alpha}[, \quad \textnormal{ where } \quad \overline{\alpha} = \frac{2\left(\frac{\psi}{\lambda}-1\right)}{\left(\frac{2\psi}{\lambda}-1\right) + \sqrt{\frac{8\psi}{\lambda}-7} }. 
				\end{equation}
			\end{enumerate} 
		\end{claim}
		\begin{proof}     \begin{enumerate}
				\item  In our setting, condition $\liminf \delta_{n} >0$ reduces to
				\begin{align}
					\liminf \delta_{n} = 	(1-\alpha)^2\rho-\alpha(1+\alpha) > 0 &\Leftrightarrow (1-\alpha)^2\frac{\psi}{\lambda}-1+\alpha-2\alpha^2>0\label{eq:desalphalam0}\\
					&\Leftrightarrow 
					\frac{(1-\alpha)^2}{2\alpha^2-\alpha+1} \cdot \psi>\lambda \label{eq:desalphalam1} \\
					&\Leftrightarrow 
					\phi(\alpha) \cdot \psi>\lambda. \nonumber
				\end{align}
				It is easy to check, that $\phi(0) = 1$, $\lim_{\alpha \nearrow 1} \phi(\alpha) = 0$, and $\phi$ is a decreasing function over $[0,1[$, as seen in Figure \ref{fig:alpha_1}. Using the same arguments as in Claim \ref{claim:first_r0}, $\psi \in ]0,2[$ and hence, $\lambda \in ]0,2[$. 
				\item On the other hand, consider $\lambda \in~]0,\psi[$, and let us define $a = \left(\frac{\psi}{\lambda} - 2\right)$. Then, \eqref{eq:desalphalam0} is equivalent to
				\begin{equation}\label{eq:alphacuadratic}
					a\alpha^2 - (2a+3)\alpha + a+1> 0.
				\end{equation}	
				Since $\lambda \in   ]0,\psi[$, we have that $a>-1$. Notice that in the case $a=0$ \eqref{eq:alphacuadratic} reduces to $\alpha < \frac{1}{3}$. Now, if $a \neq 0$, the solutions of the quadratic equation corresponding to \eqref{eq:alphacuadratic}, are given by
				
				\begin{equation}\label{eq:solalpha_1}
					\alpha_1 = \frac{2a+3 - \sqrt{8a+9}}{2a}=\frac{2(a+1)}{(2a+3 + \sqrt{8a+9})}
				\end{equation}
				and
				\begin{equation}\label{eq:solalpha_2}
					\alpha_2 = \frac{2a+3 + \sqrt{8a+9}}{2a}=\frac{2(a+1)}{(2a+3 - \sqrt{8a+9})}.
				\end{equation}
				If $-1<a < 0$, then $\alpha_1 > 0 $ and, since $2a+3+\sqrt{8a+9}>0$, $\alpha_2<0$.  Therefore, we conclude that  \eqref{eq:desalphalam0} holds is $\alpha \in \left[0, \alpha_1\right[$. Conversely, if $a > 0$, we have $\frac{1}{3}< \alpha_1 < 1$ and $\alpha_2>1$, then \eqref{eq:desalphalam0} holds if $\alpha \in \left[0, \alpha_1\right[$ and the conclusion follows noticing that $\alpha_1=\overline{\alpha}$. 
			\end{enumerate}
		\end{proof}
		\begin{rem}	\cblue{The following particular cases are deduced from Claim~\ref{claim:deltan}.
				\begin{enumerate}
					\item If $\lambda \to \psi$, then $\overline{\alpha} \to 0$.
					\item If $\lambda = \psi/2$, then  $\overline{\alpha} = 1/3$, which is usually a bound for inertial parameters (see, for instance, \cite{Alvarez2001,BotrelaxFBF2023,LorenzInFB2015,MaulenFierroPeypouquet2023}).
					\item If $\lambda=1$, from \eqref{eq:alphaintervalo} and \eqref{eq:defpsi} we obtain
					\begin{align*}
						\alpha \in  \left[0,\frac{2\left(\frac{2-\varepsilon+\nu(\zeta)}{1+\zeta^2+\nu(\zeta)}-1\right)}{\left(2\frac{2-\varepsilon+\nu(\zeta)}{1+\zeta^2+\nu(\zeta)}-1\right) + \sqrt{8\frac{2-\varepsilon+\nu(\zeta)}{1+\zeta^2+\nu(\zeta)}-7} }\right[.
					\end{align*}
			\end{enumerate}}
		\end{rem}
		\begin{center}
			\begin{figure}
				\begin{tikzpicture}
					\begin{axis}[
						xmin=-0.0, xmax=1.0,
						ymin=0., ymax=1,
						samples=400,
						xlabel={$\alpha$},
						axis lines=middle,
						grid=both,
						width=12cm,
						height=8cm,
						grid style={line width=.1pt, draw=gray!10},
						major grid style={line width=.2pt, draw=gray!50},
						minor tick num=5,
						enlargelimits={abs=0.1},
						axis line style={latex-latex},
						ticklabel style={font=\tiny, fill=white},
						xlabel style={at={(ticklabel* cs:1)}, anchor=south west},
						ylabel style={at={(ticklabel* cs:1)}, anchor=south west}
						]
						\addplot[red, thick, domain=-0.00:1] {(1-x)^2 / (2*x^2 - x +1)};
						\addlegendentry{$\phi(\alpha)$}
						\addlegendentry{$\frac{1}{3}$}
					\end{axis}
				\end{tikzpicture}\caption{Plot of the function $\alpha \mapsto \phi(\alpha) =  \dfrac{(1-\alpha)^2}{2\alpha^2-\alpha+1}
					$.}\label{fig:alpha_1}
			\end{figure}
		\end{center}
		\subsubsection{$\bm{A}$ is $s$-strongly monotone.} \hfill 
		\bigskip \par
		Let us consider the case $\lambda_n \to \lambda \in~ ]0,2[$, $\zeta_n \to \zeta \in [0,1[$, and $\nu_n \to \nu \in \{0,2\zeta\}$ and assume that $1-\zeta^2-\varepsilon>0$ and $2\beta\varepsilon \geq \gamma$. Under this setting, the condition guaranteeing convergence in Theorem~\ref{teo:WRL3} is:
		\begin{align}\label{eq:deltar}
			\delta:=(1-r)(1-\alpha)\rho-\alpha(1-\alpha)\rho-\alpha(1+\alpha) >0.
		\end{align}
		The following claim shows that, for $\theta$ sufficiently small, the conditions in \eqref{eq:intervalolamn} and \eqref{eq:desalphalam0} studied for the case $r=0$ also guarantee that $\rho>0$ and that \eqref{eq:deltar} holds when $r>0$.
		\begin{claim}
			\begin{enumerate}
				\item Suppose that (see \eqref{eq:intervalolamn})
				$$\lambda \in \left[ \underline{\lambda}, \dfrac{2-\varepsilon+\nu}{1+\zeta^2+\nu} \right[.$$
				Then there exists $\theta \in \RPP$ sufficiently small such that $\rho >0$.
				\item Suppose that (see \eqref{eq:desalphalam0})
				\begin{equation*}
					(1-\alpha)^2\rho-\alpha(1+\alpha) > 0.
				\end{equation*}
				Then, there exists $\theta \in \RPP$ sufficiently small such that $\delta > 0$.
			\end{enumerate}
		\end{claim}   
		\begin{proof}
			\begin{enumerate}
				\item   Consider $\lambda \in \left[ \underline{\lambda}, \dfrac{2-\varepsilon+\nu}{1+\zeta^2+\nu} \right[$. Let $$\delta_1 := \frac{2-\varepsilon+\nu}{\lambda(1+\zeta+\nu)} -1  > 0.$$
				By picking $\theta > 0$ such that $  \theta \leq \delta_1\lambda (1+\zeta^2+\nu)/(2s(1-\zeta^2-\varepsilon))$, then, by \eqref{eq:defrhon},
				\begin{align*}
					\rho &= 	\frac{2-\varepsilon+\nu-s\theta(1-\zeta^2-\varepsilon)}{\lambda(1+\zeta^2+\nu)}-1\\
					&= \delta_1 - \frac{s\theta(1-\zeta^2-\varepsilon)}{\lambda(1+\zeta^2+\nu)}\\
					&\geq \delta_1 - \frac{\delta_1}{2} > 0.
				\end{align*}
				\item Let $
				\delta_2:=(1-\alpha)^2\rho-\alpha(1+\alpha)  > 0$.	Hence, choosing $\theta$ such that $r \leq \delta_2/(2\rho(1-\alpha))$ (see \eqref{eq:defr}), we have
				\begin{align*}
					(1-r)(1-\alpha)\rho-\alpha(1-\alpha)\rho -\alpha(1+\alpha)
					&= (1-\alpha)^2\rho-\alpha(1+\alpha)-r\rho(1-\alpha)\\
					&\geq \delta_2 -\frac{\delta_2}{2}=\frac{\delta_2}{2},
				\end{align*}
				which shows that \eqref{eq:deltar} holds.
			\end{enumerate}
		\end{proof}        
		The next Claim provides conditions guaranteeing \eqref{eq:deltar} in the cases where $\lambda=1$ and $\alpha=0$.
		\begin{claim}
			\begin{enumerate}
				\item Suppose that $\lambda = 1$, then $\rho >0$ and \eqref{eq:deltar} holds if
				\begin{align*}
					\alpha \in \left[0, \frac{(2-r)\rho+1 - \sqrt{((2-r)\rho+1)^2-4(1-r)\rho(\rho-1)}}{2(\rho-1)}\right[.
				\end{align*}
				\item Suppose that $\alpha =0$, then \eqref{eq:deltar} holds if
				\begin{align*}
					\frac{2-\varepsilon+\nu-s\theta(1-\zeta^2-\varepsilon)}{1+\zeta^2+\nu}> \lambda,
				\end{align*}
				which is always true if $\lambda<1$.
			\end{enumerate}
		\end{claim}
		\begin{proof}
			\begin{enumerate}
				\item Suppose that $\lambda = 1$, we have
				\begin{align*}
					\rho = \frac{2-\varepsilon+\nu -s\theta (1-\zeta^2-\varepsilon)}{1+\zeta^2+\nu}-1 = \frac{(1-s\theta)(1-\zeta^2-\varepsilon)}{1+\zeta^2+\nu}.
				\end{align*}
				Hence, since $\theta \in~]0,1/r[$ and $1-\zeta^2-\varepsilon > 0$, we have $\rho > 0$. Moreover, it is easy to verify that $\rho-1 < 0$ and $1-r > 0$, thus,
				\begin{align*}
					&(1-r)(1-\alpha)\rho - \alpha (1-\alpha)\rho -\alpha (1+\alpha) = (1-r)\rho-((2-r)\rho+1)\alpha +(\rho-1)\alpha^2> 0\\
					&\Leftrightarrow \alpha \in \left[0, \frac{(2-r)\rho+1 - \sqrt{((2-r)\rho+1)^2-4(1-r)\rho(\rho-1)}}{2(\rho-1)}\right[.
				\end{align*}
				The result follows.
				\item When $\alpha =0$, we have $\delta = (1-r)\rho$. Hence, since $r<1$, \eqref{eq:deltar} reduces to
				\begin{align*}
					\rho & = \frac{2-\varepsilon+\nu-s\theta(1-\zeta^2-\varepsilon)}{\lambda(1+\zeta^2+\nu)}-1> 0 \Leftrightarrow \frac{2-\varepsilon+\nu-s\theta(1-\zeta^2-\varepsilon)}{1+\zeta^2+\nu}> \lambda.
				\end{align*}
			\end{enumerate}
		\end{proof}
		
		\subsection{Relaxing the hypothesis on \texorpdfstring{$\gamma_n \bm{M}_n-\bm{S}$}{x}}
		In this section we discuss how to consider the assumptions made in \cite{Giselsson2021NFBS} for the convergence of Algorithm~\ref{algo:algoInertialRelaxed}. Consider the following assumption.
		\begin{asume}{\cite[Assumptions~3.1~\&~3.2]{Giselsson2021NFBS}}\label{asume:1.2}
			Let $\bm{P}$ be a self adjoint strongly monotone linear operator, let 	$\bm{A}\colon \HH \to 2^\HH$ be a maximally monotone operator, let $\bm{C} \colon \HH \to \HH$ be $\beta$-cocoercive with respect to $\bm{P}$ for $\beta \in ]0,1/4[$, and let, for every $n \in \N$, $\bm{M}_n \colon \HH \to \HH$ be a 1-strongly monotone with respect to $\bm{P}$ and $\zeta$-Lipschitz operator for $\zeta >0$.
		\end{asume}
		Weak convergence of \eqref{eq:NFBgis} under Assumption~\ref{asume:1.2} was established in \cite[Theorem 5.1]{Giselsson2021NFBS} for $(\theta_n)_{n \in \N}$ in $]0,2[$ and $(\mu_n)_{n \in \N}$ satisfying
		\begin{equation}\label{eq:mun}
			(\forall n \in \N)\quad	\mu_n \leq \frac{\scal{\bm{M}_n\z_n-\bm{M}_n\x_n}{\z_n-\x_n}-\frac{1}{4\beta}\|\z_n-\x_n\|^2_{\bm{P}}}{\|\bm{M}_n\z_n-\bm{M}_n\x_n\|_{\bm{S}^{-1}}^2}.
		\end{equation} 
			In view of \cite[Proposition~2.1]{MorinBanertGiselsson2022}, with an appropriate choice of $\bm{P}$, Assumption~\ref{asume:1.2} holds in the context of Problem~\ref{pro:main} and Assumption~\ref{asume:1}. Therefore, Assumption~\ref{asume:1} can be seen as a particular case of  Assumption~\ref{asume:1.2}.
			
			For simplicity, our previous convergence analysis was restricted to the setting of Assumption~\ref{asume:1}. Nonetheless, Assumption~\ref{asume:1} allowed us to establish the convergence of the proposed algorithm under simpler step-size conditions, thereby recovering and extending previous results in the literature in a more direct manner (see Section~\ref{sec:partcases}). In particular, all the examples considered in that section satisfy Assumption~\ref{asume:1}.
			
			Even so, our analysis can be adapted to incorporate Assumption~\ref{asume:1.2}. Indeed, consider the sequence $(\z_n)_{n \in \N}$ generated by Algorithm~\ref{algo:algoInertialRelaxed} under Assumption~\ref{asume:1.2}, and let $(\mu_n)_{n \in \N}$ be such that
			\begin{equation}\label{eq:mun2}
				(\forall n \in \N)\quad \mu_n \leq \frac{\scal{\bm{M}_n\y_n-\bm{M}_n\x_n}{\y_n-\x_n}-\frac{1}{4\beta}\|\y_n-\x_n\|^2_{\bm{P}}}{\|\bm{M}_n\y_n-\bm{M}_n\x_n\|_{\bm{S}^{-1}}^2}.
			\end{equation}
			Note that, when $\gamma_n \equiv \theta_n\mu_n$, Algorithm~\ref{algo:algoInertialRelaxed} yields a relaxed and inertial version of the recurrence in~\eqref{eq:NFBgis}.
			
			We now proceed to modify the proof of Proposition~\ref{prop:fejersequence} accordingly. In particular, equations~\eqref{eq:desmon} and~\eqref{eq:dessuma1} remain valid.
			On the other hand, by the cocoercivity of $\bm{C}$ with respect to $\bm{P}$, \eqref{eq:desalgo1} is modified to
			\begin{align}\label{eq:descoco.2}
				-2\gamma_n\scal{\bm{C}\y_n - \bm{C} \z}{\x_n-\z}
				=& -2\gamma_n\scal{\bm{C}\y_n - \bm{C} \z}{\y_n-\z}-2\gamma_n\scal{\bm{C}\y_n - \bm{C} \z}{\x_n-\y_n}\nonumber\\ 
				\leq& -\frac{\gamma_n}{\varepsilon}\left(2\beta\varepsilon-\gamma_n\right)\|\bm{C}\y_n-\bm{C}\z\|^2_{\bm{P}^{-1}}+\varepsilon\|\y_n-\x_n\|^2_{\bm{P}}.
			\end{align}
			Moreover, in view of \eqref{eq:mun2}, \eqref{eq:dessuma1} is modified to
			\begin{align}\label{eq:dessuma1.2}
				2\gamma_n&\scal{\bm{M}_n\y_n - \bm{M}_n\x_n}{\x_n-\z} - 2\gamma_n\scal{\y_n -\z}{\bm{M}_n \y_n -\bm{M}_n\x_n} +\gamma_n^2\|\bm{S}^{-1}(\bm{M}_n \y_n -\bm{M}_n\x_n)\|_{\bm{S}}^2\nonumber\\
				&=	2\gamma_n\scal{\bm{M}_n\y_n - \bm{M}_n\x_n}{\x_n-\y_n} +\gamma_n^2\|\bm{S}^{-1}(\bm{M}_n \y_n -\bm{M}_n\x_n)\|_{\bm{S}}^2\nonumber\\
				&\leq \left(\frac{\gamma_n^2}{\mu_n}-2\gamma_n\right)\scal{\bm{M}_n\y_n - \bm{M}_n\x_n}{\y_n-\x_n} -\frac{\gamma_n^2}
				{4\beta\mu_n}\|\y_n-\x_n\|^2_{\bm{P}}.
			\end{align}
			Then, by combining \eqref{eq:desmon}, \eqref{eq:dessuma1}, \eqref{eq:descoco.2}, and \eqref{eq:dessuma1.2},  we deduce
			\begin{align}\label{eq:proof1.2}
				\|\w_{n+1}-\z\|_{\bm{S}}^2 \leq \|\y_n-\z\|_{\bm{S}}^2-\left(\frac{\gamma_n^2}
				{4\beta\mu_n}-\varepsilon\right)\|\y_n-\x_n\|_{\bm{P}}^2-\frac{\gamma_n}{\varepsilon}\left(2\beta\varepsilon-\gamma_n\right)\|\bm{C}\y_n-\bm{C}\z\|^2_{\bm{P}^{-1}}\nonumber\\
				+ \left(\frac{\gamma_n^2}{\mu_n}-2\gamma_n\right)\scal{\bm{M}_n\y_n - \bm{M}_n\x_n}{\y_n-\x_n}.
			\end{align}
			Furthermore, if 
			{
				\begin{equation}\label{eq:gammamu}
					\frac{\gamma_n^2}{\mu_n}-2\gamma_n \leq 0,
				\end{equation}
			}
			it follows from the $1$-strongly monotonicity of $\bm{M}_n$ that
			\begin{align}\label{eq:proofalt1}
				\|\w_{n+1}-\z\|_{\bm{S}}^2 \leq \|\y_n-\z\|_{\bm{S}}^2-&\left(\frac{\gamma_n^2}
				{4\beta\mu_n}-\varepsilon-\frac{\gamma_n^2}{\mu_n}+2\gamma_n\right)\|\y_n-\x_n\|_{\bm{P}}^2\nonumber\\
				&-\frac{\gamma_n}{\varepsilon}\left(2\beta\varepsilon-\gamma_n\right)\|\bm{C}\y_n-\bm{C}\z\|^2_{\bm{P}^{-1}}.
			\end{align}
			Hence, in view of \eqref{eq:proofalt1}, if
			\begin{equation}
				(\forall n \in \N) \quad \left(\frac{\gamma_n^2}
				{4\beta\mu_n}-\varepsilon-\frac{\gamma_n^2}{\mu_n}+2\gamma_n\right) \geq \epsilon  \quad \textnormal{ and } \quad 2\beta\varepsilon-\gamma_n \geq 0,
			\end{equation}
			for some $\epsilon> 0$, it is possible to derive a similar result to the one presented in Proposition~\ref{prop:fejersequence} and consequently establish the convergence of the Algorithm~\ref{algo:algoInertialRelaxed} under Assumption~\ref{asume:1.2} for adequate relaxation and inertial parameters. Note that \eqref{eq:gammamu} is equivalent to $\gamma_n\leq 2 \mu_n$ and it holds when $\gamma_n = \theta_n \mu_n$ and $\theta_n \in~]0,2[$, which are the conditions guaranteeing convergence in \cite{Giselsson2021NFBS}.
			%
			%
			
			\section{Particular cases of the inertial and relaxed Nonlinear-Forward-Backward}\label{sec:particularcases}
			In this section, we derive several algorithms with relaxation and inertial steps as specific instances of Algorithm~\ref{algo:algoInertialRelaxed}. In particular, we introduce a new version of FPDHF incorporating both features, which is a novel contribution to the literature. As a consequence, we derive the convergence of relaxed and inertial version of FB, FBF, and FBHF. To the best of the authors' knowledge, there are no existing inertial and relaxed versions of FBHF, being also a new contribution. To this end, we introduce the following problem.
			\begin{pro}\label{pro:mainPD}
				Let $(\H,\scal{\cdot}{\cdot})$ and $(\G,\scal{\cdot}{\cdot})$ 
				be real Hilbert spaces, let $A: \H \to 2^\H$ and $B:\G \to 
				2^\G$ be maximally 
				monotone operators,  let $L\colon \H \to \G$ be 
				a linear bounded operator, 
				let 
				$D:\H 
				\to \H$ be a $\zeta$-Lipschitzian and monotone operator for $\zeta \in 
				\RPP$, and let 
				$C:\H \to \H$ be a 
				$\beta$-cocoercive operator for $\beta \in \RPP$. The problem is to
				\begin{equation} 
					\text{find }  (x,u) \in \H \times \G \text{ such that } 
					\begin{cases}
						0 &\in (A+C+D)x+L^*u\\
						0 &\in B^{-1}u-Lx,
					\end{cases} 
				\end{equation}
				under the hypothesis that its solution set, denoted by 
				$\bm{Z}$,  is nonempty.
			\end{pro}
			Note that, given $(\hat{x},\hat{u}) \in \bm{Z}$, $\hat{x}$ is a 
			solution to the primal monotone inclusion
			\begin{equation}\label{eq:primalinclu}
				\text{find }  x \in \H \text{ such that } 
				0 \in (A+L^*BL+C+D)x
			\end{equation}
			and $\hat{u}$ is a solution to the dual monotone inclusion
			\begin{equation}\label{eq:dualinclu}
				\text{find }  u \in \G \text{ such that } 
				0 \in  B^{-1}u - L(A+C+D)^{-1}(-L^*u).
			\end{equation}
			This inclusion problem has also been studied in multivariate settings and in the context of parallel sums, for example, in \cite{AttouchBricenoCombettes2010,CombettesMinh2022,Comb13,CombettesEckstein2018MP}. We propose the following inertial and relaxed version of FPDHF, presented in  \cite[Algorithm~2.3]{Roldan20254op} for solving Problem~\ref{pro:mainPD}.
			\begin{algo}\label{algo:algo1PD}
				In the context of Problem~\ref{pro:mainPD}, let $(z_0,u_0)\in 
				\H\times \G$, let $(z_{-1},u_{-1})\in 
				\H\times \G$, let $(\sigma,\tau)\in \RPP^2$, let $(\alpha_n)_{n \in \N}$ be a nonnegative sequence such that $\alpha_n \to \alpha \in [0,1[$, let $\lambda_n \in \RPP$, and consider the 
				iteration:
				\begin{equation}\label{eq:algo1PD}
					(\forall n\in\N)\quad 
					\begin{array}{l}
						\left\lfloor
						\begin{array}{l}
							(p_{n},q_n) = (z_n,u_n)+\alpha_n(z_n-z_{n-1},u_n-u_{n-1})\\
							x_{n} = J_{\tau A} (p_n-\tau (L^* q_n+Dp_{n}+C 
							p_n) 
							) \\
							w_{n+1} = x_{n}-\tau(Dx_{n}-Dp_{n})\\
							v_{n+1} =  J_{\sigma
								B^{-1}}\left(q_{n}+\sigma L( 
							x_{n}+w_{n+1}-p_{n})\right)\\
							(z_{n+1},u_{n+1}) = \lambda _n(w_{n+1},v_{n+1})+(1-\lambda_n)(p_n,q_n).
							
						\end{array}
						\right.
					\end{array}
				\end{equation}
			\end{algo}
			\begin{teo}\label{teo:convPD}
				In the context of Problem~\ref{pro:mainPD}, let $(\sigma,\tau)\in \RPP^2$, let $\varepsilon 
				\in 
				\,]0,1[$, let $(z_0,u_0)\in 
				\H\times \G$, let $(z_{-1},u_{-1})\in 
				\H\times \G$, let $(\alpha_n)_{n \in \N}$ be a nonnegative sequence such that $\alpha_n \to \alpha \in [0,1[$, let $(\lambda_n)_{n \in \N}$ be a nonnegative sequence, and let 
				$(z_n)_{n \in 
					\N}$ and $(u_n)_{n \in \N}$ be the sequences 
				generated by Algorithm~\ref{algo:algo1PD}.
				Assume that one of the following assertions hold
				\begin{enumerate}
					\item $(\alpha_n)_{n \in \N}$ is nondecreasing.
					\item $(\alpha_n)_{n \in \N}$ is decreasing and $\sum_{n \in \N} (\alpha_n-\alpha)<+\infty$.
				\end{enumerate}
				Define
				\begin{align}
					\widetilde{\beta} & = \beta(1-\sigma\tau\|L\|^2)\label{eq:wbeta},\\
					\widetilde{\zeta} &= \tau\zeta/(\sqrt{1-\sigma\tau\|L\|^2})\label{eq:wzeta},\\
					\nu &= \begin{cases}
						0, \text{ if } T \colon \H\times \G \to \H \times \G \colon (x,u) \mapsto \tau(Dx, -\tau LDx) \text { is monotone,}\\
						2\widetilde{\zeta}, \text{ otherwise,}
					\end{cases}\label{eq:nu}\\
					\psi	&= \frac{2-\varepsilon+\nu}{1+(\widetilde{\zeta})^2+\nu},\label{eq:defpsiPD}\\
					(\forall n \in \N) \quad \rho_n	&= \frac{\psi}{\lambda_n}-1,\label{eq:defrho}\\
					(\forall n \in \N) \quad \delta_n & = (1-\alpha_{n})\rho_n-\alpha_{n+1}(1-\alpha_{n+1})\rho_{n+1}-\alpha_{n+1}(1+\alpha_{n+1}).\label{eq:defdelta}
				\end{align}
				Assume that $1-\sigma\tau\|L\|^2>0$, $\widetilde{\zeta}<1$,
				\begin{equation}\label{eq:steps0}
					1-(\widetilde{\zeta})^2-\varepsilon>0,
				\end{equation}	
				\begin{equation}\label{eq:steps1}
					\tau \leq 2\widetilde{\beta}\varepsilon,
				\end{equation}	
				for every $n \in \N$, $\rho_n \geq 0$, and that
				\begin{equation}\label{eq:steps2}
					\liminf \delta_n = \delta >0.
				\end{equation}
				Then, there exists $(x,u) \in \bm{Z}$ such that $(z_n,u_n)\weak (x,u)$.	
			\end{teo}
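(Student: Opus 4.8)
The plan is to realize Algorithm~\ref{algo:algo1PD} as a particular instance of Algorithm~\ref{algo:algoInertialRelaxed} in a suitable product space, and then invoke Theorem~\ref{teo:WRL} (under hypothesis (i)) or Theorem~\ref{teo:WRL2} (under hypothesis (ii)). The first step is to set $\HH = \H \times \G$, and to define the operators $\bm{A}$, $\bm{C}$, $\bm{M}_n$ and the metric operator $\bm{S}$ so that the warped-resolvent step $\x_n = (\bm{M}_n + \bm{A})^{-1}(\bm{M}_n - \bm{C})\y_n$ reproduces the $(x_n, v_{n+1})$-update and the forward step $\w_{n+1} = \y_n - \gamma_n \bm{S}^{-1}(\bm{M}_n\y_n - \bm{M}_n\x_n)$ reproduces the $(w_{n+1},v_{n+1})$-update with $\gamma_n \equiv 1$ (or a constant). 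The natural choices, following \cite{Roldan20254op}, are $\bm{A} \colon (x,u) \mapsto (Ax, B^{-1}u)$, $\bm{C} \colon (x,u) \mapsto (Cx, 0)$, $\bm{S} \colon (x,u)\mapsto (x/\tau - L^*u,\, u/\sigma - Lx)$ (which is self-adjoint and, under $\sigma\tau\|L\|^2 < 1$, strongly positive), and $\bm{M}_n \colon (x,u) \mapsto \bm{S}(x,u) + (Dx, -\tau LDx)$, so that $\bm{M}_n - \bm{S}$ is the fixed (skew-plus-monotone) perturbation $T$ appearing in \eqref{eq:nu}. With these identifications, $\gamma_n \bm{M}_n - \bm{S} = T$, and one must check that $T$ is $\widetilde{\zeta}$-Lipschitz with respect to $\bm{S}$; this is exactly where \eqref{eq:wbeta}--\eqref{eq:wzeta} come from, and it is essentially \cite[Proposition~2.1 or the relevant estimate]{Roldan20254op}. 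Likewise $\bm{C}$ is $\widetilde{\beta}$-cocoercive with respect to $\bm{S}$ (again the factor $1 - \sigma\tau\|L\|^2$ enters through the change of metric), so Assumption~\ref{asume:1} holds with $\zeta_n \equiv \widetilde{\zeta}$, $\nu_n \equiv \nu$, and Problem~\ref{pro:main} for $(\bm{A}, \bm{C})$ on $\HH$ is equivalent to Problem~\ref{pro:mainPD}.

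The second step is bookkeeping: verify that the inertial/relaxation wrapper matches. The update $(p_n, q_n) = (z_n, u_n) + \alpha_n((z_n,u_n) - (z_{n-1},u_{n-1}))$ is exactly $\y_n = \z_n + \alpha_n(\z_n - \z_{n-1})$ in the product space, and $(z_{n+1}, u_{n+1}) = \lambda_n(w_{n+1}, v_{n+1}) + (1-\lambda_n)(p_n, q_n)$ is exactly $\z_{n+1} = \lambda_n \w_{n+1} + (1-\lambda_n)\y_n$. One should also confirm that the explicit formulas in \eqref{eq:algo1PD} for $x_n$, $w_{n+1}$, $v_{n+1}$ genuinely solve the implicit warped-resolvent inclusion $(\bm{M}_n + \bm{A})\x_n \ni (\bm{M}_n - \bm{C})\y_n$; this is a short computation expanding the block structure of $\bm{M}_n$ and $\bm{A}$, using that $J_{\tau A}$ and $J_{\sigma B^{-1}}$ are the resolvents of the respective diagonal blocks, and noting that the cross term $-\tau L^* q_n$ and the Gauss--Seidel coupling $L(x_n + w_{n+1} - p_n)$ arise precisely from the off-diagonal part of $\bm{S}$ together with the $(Dx, -\tau LDx)$ correction. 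Since $\bm{S}$ here is constant, $(\z_n, \ldots) \to \cdot$ weakly in the $\bm{S}$-norm is equivalent to weak convergence in $\H \times \G$, so the conclusion $(z_n, u_n) \weak (x,u)$ with $(x,u) \in \bm{Z}$ follows directly.

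The third step is to check that the hypotheses of Theorem~\ref{teo:WRL} (resp.\ Theorem~\ref{teo:WRL2}) are implied by those assumed here. With $\zeta_n \equiv \widetilde{\zeta}$, $\gamma_n \equiv \tau$ (the effective constant step, after the metric rescaling), $\beta$ replaced by $\widetilde{\beta}$, and $\psi$, $\rho_n$, $\delta_n$ defined by \eqref{eq:defpsiPD}--\eqref{eq:defdelta} matching \eqref{eq:defpsin}--\eqref{eq:defdeltan} verbatim, condition \eqref{eq:steps0} together with \eqref{eq:steps1} gives \eqref{eq:stepsize} (resp.\ \eqref{eq:stepsize2}) for a suitable $\delta' \in \RPP$ and $N = 0$, since $1 - \widetilde{\zeta}^2 - \varepsilon > 0$ is strict and $\zeta_n$ is constant; the hypotheses $\rho_n \geq 0$ and $\liminf \delta_n = \delta > 0$ are assumed outright; and the inertial monotonicity/summability conditions are exactly cases (i)/(ii). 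One also needs $\varepsilon$ here to satisfy the Proposition-level constraint \eqref{eq:cond_epsilon_prop}, which again reduces to $\varepsilon < 1 - \widetilde{\zeta}^2$, i.e.\ \eqref{eq:steps0}. Invoking the appropriate theorem then yields assertion~(ii) of that theorem, namely weak convergence of $(\z_n)$ to a solution of Problem~\ref{pro:main}, which is the claimed convergence of $(z_n, u_n)$.

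The main obstacle I expect is the verification of the Lipschitz-with-respect-to-$\bm{S}$ estimate for $\bm{M}_n - \bm{S} = T$ and the cocoercivity-with-respect-to-$\bm{S}$ of $\bm{C}$, i.e.\ proving the formulas \eqref{eq:wbeta} and \eqref{eq:wzeta} for $\widetilde{\beta}$ and $\widetilde{\zeta}$; this requires carefully diagonalizing the metric $\bm{S}$ (equivalently, performing the standard Chambolle--Pock change of variables that turns the primal--dual metric into a product of identities scaled by $1 - \sigma\tau\|L\|^2$) and estimating $\|Tx - Ty\|_{\bm{S}^{-1}}$ against $\|x - y\|_{\bm{S}}$ using the $\zeta$-Lipschitz property of $D$ and the norm of $L$. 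A secondary technical point is the case distinction in \eqref{eq:nu}: showing that monotonicity of $-(\gamma_n \bm{M}_n - \bm{S}) = -T$ in the $\bm{S}$-metric is equivalent to the stated monotonicity of $(x,u) \mapsto \tau(Dx, -\tau LDx)$, so that $\nu_n \equiv \nu$ is the correct instantiation of \eqref{eq:defnun}.
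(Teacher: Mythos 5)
Your overall strategy is the right one and matches the paper's: rewrite Algorithm~\ref{algo:algo1PD} as an instance of Algorithm~\ref{algo:algoInertialRelaxed} on $\HH=\H\times\G$, verify Assumption~\ref{asume:1} with constants $\widetilde{\zeta}$, $\widetilde{\beta}$ in the metric induced by $\bm{S}$, and then invoke Theorem~\ref{teo:WRL} or Theorem~\ref{teo:WRL2}. However, the concrete operator choices you propose are not the ones that make this work, and the defect is not merely bookkeeping: it is exactly the nontrivial content of the proof. First, taking $\bm{A}\colon(x,u)\mapsto(Ax,B^{-1}u)$ and $\bm{C}\colon(x,u)\mapsto(Cx,0)$ leaves the monotone Lipschitz operator $D$ and the skew coupling $(x,u)\mapsto(L^*u,-Lx)$ out of $\bm{A}+\bm{C}$ entirely, so $\zer(\bm{A}+\bm{C})$ is not the primal-dual solution set $\bm{Z}$ of Problem~\ref{pro:mainPD}; weak convergence of $(\z_n)$ to a zero of your $\bm{A}+\bm{C}$ would not prove the theorem. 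Second, with your $\bm{M}_n=\bm{S}+(D\cdot,-\tau LD\cdot)$ and a diagonal $\bm{A}$, the warped-resolvent inclusion $(\bm{M}_n+\bm{A})\x_n\ni(\bm{M}_n-\bm{C})\y_n$ has first component $x/\tau-L^*v+Dx+Ax\ni p_n/\tau-L^*q_n+Dp_n-Cp_n$, which is implicit in $Dx$ and coupled to the dual block through $-L^*v$; it is not solved by $x_n=J_{\tau A}(p_n-\tau(L^*q_n+Dp_n+Cp_n))$, so your instantiation does not reproduce \eqref{eq:algo1PD}. The paper's choice is the opposite sign and the opposite placement: $\bm{A}\colon(x,u)\mapsto(Ax+Dx+L^*u)\times(B^{-1}u-Lx)$, $\bm{M}\colon(x,u)\mapsto(x/\tau-Dx-L^*u,\,-Lx+\tau LDx+u/\sigma)$, $\bm{S}\colon(x,u)\mapsto(x-\tau L^*u,\,-\tau Lx+\tau u/\sigma)$, $\gamma_n\equiv\tau$, so that $D$ and $L^*u$ cancel in $\bm{M}+\bm{A}$, the first block reduces to $\id/\tau+A$ (hence $J_{\tau A}$), and the Gauss--Seidel term $L(x_n+w_{n+1}-p_n)$ comes out of the second block; one then checks $\w_{n+1}=\y_n-\gamma_n\bm{S}^{-1}(\bm{M}\y_n-\bm{M}\x_n)$ via the factorization $\bm{M}=\bm{S}\circ\bm{T}$ with $\bm{T}\colon(x,u)\mapsto(x/\tau-Dx,\,u/\tau)$.

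The sign issue also invalidates your treatment of $\nu$. In the paper's setup $-(\gamma_n\bm{M}_n-\bm{S})=T\colon(x,u)\mapsto\tau(Dx,-\tau LDx)$, so the case distinction in \eqref{eq:defnun} is literally the one stated in \eqref{eq:nu}. In your setup $-(\gamma_n\bm{M}_n-\bm{S})$ is (a multiple of) $-T$, and monotonicity of $-T$ is not equivalent to monotonicity of $T$ (since $T$ is not skew in general: $\scal{T(x,u)-T(y,v)}{(x,u)-(y,v)}=\tau\scal{Dx-Dy}{x-y}-\tau^2\scal{L(Dx-Dy)}{u-v}$), so the "secondary technical point" you defer is actually false as stated, and the value of $\nu_n$ you would feed into $\psi$, $\rho_n$, $\delta_n$ need not match \eqref{eq:defpsiPD}--\eqref{eq:defdelta}. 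Finally, the Lipschitz and cocoercivity estimates you postpone are precisely where $\widetilde{\zeta}=\tau\zeta/\sqrt{1-\sigma\tau\|L\|^2}$ and $\widetilde{\beta}=\beta(1-\sigma\tau\|L\|^2)$ come from; in the paper these follow from a direct computation of $\|(\gamma_n\bm{M}_n-\bm{S})\z-(\gamma_n\bm{M}_n-\bm{S})\x\|_{\bm{S}^{-1}}^2$ using $\bm{S}^{-1}(\gamma_n\bm{M}_n-\bm{S})=\gamma_n\bm{T}-\id$ together with the bound $\|x-y\|^2\le(1-\sigma\tau\|L\|^2)^{-1}\|(x-y,u-v)\|_{\bm{S}}^2$, and from the cited cocoercivity result of \cite{MorinBanertGiselsson2022}. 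Once the operators are chosen correctly, your third step (matching \eqref{eq:steps0}--\eqref{eq:steps2} with \eqref{eq:stepsize}, \eqref{eq:cond_epsilon_prop}, and \eqref{eq:stepsize1}, and noting that $\bm{S}$-weak convergence is weak convergence in $\H\times\G$) is fine and is how the paper concludes.
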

			\begin{proof}
				Let $\HH=\H\times\G$ and consider the operators:
				\begin{equation}\label{eq:defAPD}
					\begin{aligned}
						&\bm{A} \colon \HH \to 2^\HH \colon (x,u) \mapsto (Ax+Dx+L^*u)\times (B^{-1}u-Lx)\\
						&\bm{M} \colon \HH \to \HH \colon (x,u) \mapsto (x/\tau-Dx-L^*u,-Lx+\tau LDx+u/\sigma)\\
						&\bm{C} \colon \HH \to \HH \colon (x,u) \mapsto (Cx,0)\\
						&\bm{S} \colon \HH \to \HH \colon (x,u) \mapsto (x-\tau L^*u,-\tau L x+\tau u /\sigma)\\
						&\bm{T} \colon \HH \to \HH \colon (x,u) \mapsto (x/\tau - D x,u /\tau)
					\end{aligned}
				\end{equation}
				For every $n \in \N$, let
				\begin{equation}
					\bm{M}_n = \bm{M}, \quad \gamma_n = \tau, \quad \y_n=(p_n,q_n),  \quad\x_n = (x_{n},v_{n+1}), \quad \w_n=(w_n,v_n), \text{ and }  \z_n = (z_n,u_n).
				\end{equation}	
				Note that, $\bm{M} = \bm{S}\circ \bm{T}$, thus $\bm{S}^{-1}\circ\bm{M}=\bm{T}$. Moreover, for every $(x,u) \in \HH$ we have,
				\begin{equation}\label{eq:M-C}
					(\bm{M}_n - \bm{C})(x,u)= (x/\tau-Dx-Cx-L^*u,\tau LDx-Lx+u/\sigma),
				\end{equation}
				and
				\begin{equation}\label{eq:M+A}
					(\bm{M}_n + \bm{A})(x,u) = (Ax+x/\tau)\times (B^{-1}u-2Lx+\tau LDx+u/\sigma).
				\end{equation}
				Now, set $(r_n,s_n)=\bm{r}_n = (\bm{M}_n-\bm{C})\y_n$, then, in view of \eqref{eq:M-C}
				\begin{align}\label{eq:proofcor1}
					(\forall n \in \N) \quad \bm{r}_n = (\bm{M}_n - \bm{C})\y_n \Leftrightarrow
					\begin{cases}
						r_n = p_n/\tau-Dp_n-Cp_n-L^*q_n,\\
						s_n = \tau LDp_n-Lp_n+q_n/\sigma.
					\end{cases}
				\end{align}
				Moreover, it follows from \eqref{eq:algo1PD} that, for every $n \in \N$,
				\begin{align}\label{eq:proofcor2}
					\w_{n+1}=(w_{n+1},v_{n+1})&=(x_{n}-\tau(Dx_{n}-Dp_n),v_{n+1})\nonumber\\
					&=(p_n,q_n)-(p_n-\tau Dp_n,q_n)+(x_{n}-\tau Dx_{n},v_{n+1})\nonumber\\
					&=\y_n - \tau  (\bm{T}\y_n -\bm{T}\x_n)\nonumber\\
					&=\y_n - \tau  (\bm{S}^{-1}\bm{M}_n \y_n -\bm{S}^{-1}\bm{M}_n\x_n)\nonumber\\
					&=\y_n - \gamma_n \bm{S}^{-1}(\bm{M}_n \y_n -\bm{M}_n\x_n).
				\end{align}
				Hence, by \eqref{eq:algo1PD}, \eqref{eq:proofcor1}, \eqref{eq:proofcor2}, and \eqref{eq:M+A}, we deduce
				\begin{align*}
					&\begin{cases}
						x_{n} = J_{\tau A} ( p_n-\tau (L^*q_n+Dp_n+Cp_n) )\\
						v_{n+1}  =  J_{\sigma
							B^{-1}}\left(q_{n}+\sigma L( 
						x_{n}+w_{n+1}-p_{n})\right)
					\end{cases} \\
					&\hspace{4cm}\Leftrightarrow	\begin{cases}
						x_{n} = J_{\tau A} ( p_n-\tau (L^*q_n+Dp_n+Cp_n) )\\
						v_{n+1} = J_{\sigma B^{-1}} (q_n+\sigma L(2x_{n}-\tau Dx_n+\tau Dp_{n}-p_n))
					\end{cases}	\\
					&\hspace{4cm}\Leftrightarrow
					\begin{cases}
						x_{n} = J_{\tau A} (\tau r_n)\\
						v_{n+1}= J_{\sigma B^{-1}} (\sigma (s_n+2Lx_{n}-\tau LDx_{n}))
					\end{cases}  \\	 
					&\hspace{4cm}\Leftrightarrow
					\begin{cases}
						r_n \in Ax_{n}+x_{n}/\tau\\
						s_n \in B^{-1}v_{n+1}-2Lx_{n}+\tau LDx_{n}+v_{n+1}/\sigma
					\end{cases}\\
					&\hspace{4cm}\Leftrightarrow \bm{r}_n \in (\bm{M}_n+\bm{A}) \x_n\\
					&\hspace{4cm}\Leftrightarrow
					\x_n=(\bm{M}_n+\bm{A})^{-1}\bm{r}_n.
				\end{align*}  
				Therefore, we deduce that, \eqref{eq:algo1PD}, can be written as
				\begin{align*}
					&\y_n = \z_n + \alpha_n (\z_n - \z_{n-1}),\\
					&\x_n = (\bm{M}_n+\bm{A})^{-1}(\bm{M}_n-\bm{C})\y_n\\
					&\w_{n+1} = \y_n - \gamma_n \bm{S}^{-1}(\bm{M}_n \y_n - \bm{M}_n\x_n), \\
					&\z_{n+1} = \lambda_n \w_{n+1} + (1-\lambda_n)\y_n.
				\end{align*}
				Thus, \eqref{eq:algo1PD} is a particular instance of Algorithm~\ref{algo:algoInertialRelaxed}.	Now, note that, in view of the Lipschitzian property of $D$ and \cite[Lemma~6.1]{MorinBanertGiselsson2022},  we have for every $\z = (x,u) \in \HH$ and every $\x = (y,v) \in \HH$
				\begin{align*}
					\|(\gamma_n\bm{M}_n-\bm{S})\z-(\gamma_n\bm{M}_n-\bm{S})\x\|^2_{\bm{S}^{-1}}&=\|\bm{S^{-1}}(\gamma_n\bm{M}_n-\bm{S})\z-\bm{S^{-1}}(\gamma_n\bm{M}_n-\bm{S})\x\|^2_{\bm{S}} \\
					&= \|(\gamma_n\bm{T}-\id)\z-(\gamma_n\bm{T}-\id)\x\|^2_{\bm{S}} \\
					&= \scal{\tau(Dx-Dy,0)}{\tau\bm{S}(Dx-Dy,0)}\\
					& = \scal{\tau(Dx-Dy,0)}{\tau(Dx-Dy,\tau L(Dy-Dx))}\\
					& =\tau^2\|Dx-Dy\|^2\\
					&\leq \tau^2\zeta^2\|x-y\|^2\\ 
					&\leq \frac{\tau^2\zeta^2}{1-\tau\sigma\|L\|^2}\|(x-y,u- v))\|^2_{\bm{S}}\\
					&=\frac{\tau^2\zeta^2}{1-\tau\sigma\|L\|^2}\|\z-\x\|^2_{\bm{S}}.
				\end{align*}	
				Then, it follows that $\gamma_n \bm{M}_n-\bm{S}$ is $\widetilde{\zeta}$-Lipschitz with respect to $\bm{S}$. Moreover, $\bm{C}$ is $\widetilde{\beta}$-cocoercive with respect to $\bm{S}$ \cite[Corollary~6.1]{MorinBanertGiselsson2022}. Note that, in this case $-(\gamma_n\bm{M}_n-\bm{S})\z=\tau(Dx, -\tau LDx)=T(x,u)$, then $\nu_n \equiv \nu$.
				Hence, $\delta_n$ defined in \eqref{eq:defdelta} coincide with \eqref{eq:defdeltan}. 
				Altogether,  the result follows by invoking Theorem~\ref{teo:WRL}.
			\end{proof}  
			\begin{rem}\label{rem:FPDHF}
				\begin{enumerate}
					\item\label{rem:FPDHF0} \cblue{In view of Theorem~\ref{teo:WRL3}, if the operator $\bm{A}$, defined in~\eqref{eq:defAPD}, is strongly monotone, Algorithm~\ref{algo:algo1PD} converges linearly to the unique solution of Problem~\ref{pro:mainPD}. This occurs, for instance, when $A$ is strongly monotone and $B$ is cocoercive.}
					\item \label{rem:FPDHF1}		In the case where $\alpha_n\equiv0$ and $\lambda_n\equiv1$, Algorithm~\ref{algo:algo1PD} reduces to
								%
					the FPDHF algorithm \cite[Algorithm~2.3]{Roldan20254op}. In this case, \eqref{eq:steps2} reduces to
					\begin{equation*}
						\delta = \rho = \psi-1 = \dfrac{2-\varepsilon+\nu}{1+\nu+(\widetilde{\zeta})^2}-1> 0 \Leftrightarrow 1-\varepsilon - (\widetilde{\zeta})^2> 0.
					\end{equation*}	
					Moreover,  by considering $\varepsilon = \dfrac{\epsilon}{1-\sigma\tau\|L\|^2}$, with $\epsilon>0$, we have
					\begin{equation}\label{eq:stepPD1}
						1-\varepsilon - (\widetilde{\zeta})^2> 0 \Leftrightarrow 1-\epsilon > \sigma\tau\|L\|^2+\tau^2\zeta^2,
					\end{equation}
					and \eqref{eq:steps1}  reduces to
					\begin{equation}\label{eq:stepPD2}
						\tau \leq 2\widetilde{\beta}\varepsilon \Leftrightarrow \tau \leq 2\beta\epsilon.
					\end{equation}
					Note that \eqref{eq:stepPD1} and \eqref{eq:stepPD2} are the conditions on the step-size for guaranteeing the convergence of FPDHF in \cite[Theorem~4.2]{Roldan20254op}. This shows that FPDHF is a particular case of the Nonlinear Forward-Backward algorithm, which has not been previously established.
					\item \label{rem:FPDHF2} Let $f \in \Gamma_0(\H)$, $g\in \Gamma_0(\G)$, $L\colon \H \to \G$ be a linear bounded operator,  $h\colon \H \to \R$ be 
					a differentiable 
					convex function with a $\zeta$-Lipschitz 
					gradient for some $\zeta\in \RPP$, and let 
					$d\colon \G \to \R$ be a differentiable convex function
					with a $1/\beta$-Lipschitz gradient for some 
					$\beta \in \RPP$. Consider the optimization problem,
					\begin{equation}\label{eq:probopti}
						\min_{x \in \H} f(x)+g(Lx)+h(x)+d(x),
					\end{equation}
					under the assumption that its solution set is nonempty. Under standard qualification conditions (see \cite[Proposition 6.19]{bauschkebook2017}) by considering the Fermat's rule \cite[Theorem~16.3]{bauschkebook2017} and subdifferential calculus rules \cite[Theorem~16.47]{bauschkebook2017},
					and setting  $A = \partial f$, $B = \partial g$, $C = \nabla d$, and $D = \nabla h$, the problem in \eqref{eq:probopti} can be solved by Algorithm~\ref{algo:algo1PD}, which, in this setting, iterates as follows:
					\begin{equation}\label{eq:algo1opti}
						(\forall n\in\N)\quad 
						\begin{array}{l}
							\left\lfloor
							\begin{array}{l}
								(p_{n},q_n) = (z_n,u_n)+\alpha_n(z_n-z_{n-1},u_n-u_{n-1})\\
								x_{n} = \prox_{\tau f} (p_n-\tau (L^* q_n+\nabla h (p_{n})+\nabla d 
								(p_n)) 
								) \\
								w_{n+1} = x_{n}-\tau(\nabla h (x_{n})- \nabla h (p_{n}))\\
								v_{n+1} =  \prox_{\sigma
									g^*}\left(q_{n}+\sigma L( 
								x_{n}+w_{n+1}-p_{n})\right)\\
								(z_{n+1},u_{n+1}) = \lambda _n(w_{n+1},v_{n+1})+(1-\lambda_n)(p_n,q_n).
							\end{array}
							\right.
						\end{array}
					\end{equation}
				\end{enumerate}	
			\end{rem}
			
			To initialize Algorithm~\ref{algo:algo1PD}, the parameters $\varepsilon$, $\sigma$, and $\tau$ and the sequences $(\alpha_n)_{n \in \N}$ and $(\lambda_n)_{n \in \N}$ must be chosen to satisfy conditions \eqref{eq:steps0}–\eqref{eq:steps2}. Since this selection may not be straightforward, we propose an initialization strategy. This scheme depends on $\overline{\varepsilon} \in \RPP$ and $\chi \in \RPP$, which are specified in \eqref{eq:defepsilon} and \eqref{eq:rtau2}, respectively. To implement the algorithm, the user only needs to choose $(t,\kappa_1,\kappa_2) \in~]0,1[^3$. The constant $t$ provides some flexibility in the step-size and the relaxation-inertial parameters. The constants $\kappa_1$ and $\kappa_2$ regulate the step-sizes $\tau$ and $\sigma$, respectively.  A detailed explanation of each step of the initialization, can be found below. 	
			
			\begin{ini}\label{ini:1} In the context of Problem~\ref{pro:mainPD}, let $(\alpha_n)_{n \in \N}$ and $(\lambda_n)_{n \in \N}$ be nonnegative sequences. Suppose that $\alpha_n \to \alpha$ and $\lambda_n\to\lambda$ and consider the following initialization procedure for Algorithm~\ref{algo:algo1PD}.
				\begin{enumerate}
					\item\label{ini:11}  
					Choose $t \in ]0,1]$, and set $\varepsilon = t\cdot \overline{\varepsilon}$, where 
					\begin{equation} \label{eq:defepsilon}
						\overline{\varepsilon} = \frac{1-\sqrt{1+16\beta^2\zeta^2}}{8\beta^2\zeta^2} = \frac{2}{1+\sqrt{1+16\beta^2\zeta^2}}.
					\end{equation}	 Compute $\chi \in \RPP$ as 
					\[\chi=\frac{4\beta}{1+\sqrt{1+16\beta^2\zeta^2}}.\]

					\item\label{ini:12}  Choose $\kappa_1 \in ~]0,1[$ and set $\tau = \kappa_1 \chi$. 
					\item\label{ini:13}  Choose $\kappa_2 \in ~ ]0,1[$ and set $\sigma =  \frac{\kappa_2}{\tau\|L\|^2}\left(1-\frac{\tau}{\chi}\right)$.  
					\item\label{ini:14}  Let $\widetilde{\zeta}$, $\nu$, $\psi$ be defined as in \eqref{eq:wzeta}, \eqref{eq:nu}, and \eqref{eq:defpsiPD}, respectively.
					
					\item\label{ini:15}  Select one of the following scenarios: 
					\begin{itemize}
						\item\label{ini:151}  Choose $\alpha \in [0,1[$ and $\lambda$ according \eqref{eq:lambda}, thus
						\begin{equation}\label{eq:lambdaini}
							\lambda \in \left]0, \frac{(1-\alpha)^2}{2\alpha^2-\alpha+1}\cdot \psi \right[.
						\end{equation} 
						\item\label{ini:152} 	Choose $\lambda \in \left]0, \psi\right[$ and $\alpha$ according \eqref{eq:alphaintervalo}, thus
						\begin{equation}\label{eq:alphaini}
							\alpha \in \left[0, \overline{\alpha}(\widetilde{\zeta},\varepsilon,\lambda)\right[.
						\end{equation}
					\end{itemize}
				\end{enumerate}	
			\end{ini}	
			
			Now, we provide explicit values for $\overline{\varepsilon}$ and $\chi$ to be considered in step \eqref{ini:11} in Initialization~\ref{ini:1} and an explanation of each step on it. 
			
			First, note that \eqref{eq:steps0} is equivalent to
			\begin{align}\label{eq:rsigma1}
				1-\varepsilon-\frac{\tau^2\zeta^2}{1-\s\T\|L\|^2}>0 \Leftrightarrow \frac{1-\varepsilon-\tau^2\zeta^2}{(1-\varepsilon)\tau\|L\|^2} > \sigma
			\end{align}
			and \eqref{eq:steps1} is equivalent to
			\begin{align}\label{eq:rsigma2}
				\tau \leq 2\beta (1-\s\T\|L\|^2)\varepsilon \Leftrightarrow \frac{1}{\tau\|L\|^2} \left(1-\frac{\tau}{2\beta \varepsilon}\right) \geq \sigma.
			\end{align}
			Hence, in order to guarantee the existence of $\sigma > 0$ satisfying both \eqref{eq:rsigma1} and \eqref{eq:rsigma2}, it is necessary that
			\begin{equation}\label{eq:rtau1}
				1-\varepsilon-\tau^2\zeta^2 > 0 \textnormal{ and } 1-\tau/(2\beta\varepsilon) >0 \Leftrightarrow \frac{\sqrt{1-\varepsilon}}{\zeta} \geq \tau \textnormal{ and }
				2\beta\varepsilon>\tau.
			\end{equation}
			To allow larger values of $\tau$, following a similar approach to that in \cite[Proposition~2.1.3]{BricenoDavis2018}, we choose $\varepsilon$ such that $\sqrt{1-\varepsilon}/{\zeta} = 2\beta\varepsilon$ which yields \eqref{eq:defepsilon}.
			Therefore,  \eqref{eq:rtau1} holds if
			\begin{equation}\label{eq:rtau2}
				\tau \in ~ ]0, \chi[, \quad \textnormal{ where } \quad \chi =\frac{\sqrt{1-\overline{\varepsilon}}}{\zeta}= 2\beta \overline{\varepsilon}= \frac{4\beta}{1+\sqrt{1+16\beta^2\zeta^2}},
			\end{equation}
			which yield the constants on \eqref{ini:11} in Initialization~\ref{ini:1}. 
			Hence, we have $\tau \in ~ ]0, \chi[$, which yields the step \eqref{ini:12} in Initialization~\ref{ini:1}.
			In order to chose $\sigma$ such that \eqref{eq:rsigma1} and \eqref{eq:rsigma2} hold, note that $1-\overline{\varepsilon} = \chi^2\zeta^2$, thus, \eqref{eq:rsigma1} is equivalent to $(\chi^2-\tau^2)(\chi^2 \tau \|L\|^2)^{-1}>\sigma$. Furthermore, since $2\beta\overline{\varepsilon} = \chi$, \eqref{eq:rsigma2} is equivalent to $(1-\tau/\chi)(\tau\|L\|^2)^{-1} \geq \sigma$. Hence, since $\T < \chi $,  we have 
			\begin{equation}
				\frac{\chi^2-\tau^2}{\chi^2}\frac{1}{\tau \|L\|^2} >\left(1- \frac{\tau}{\chi}\right)\frac{1}{\tau \|L\|^2}.
			\end{equation}
			Then, both \eqref{eq:rsigma1} and \eqref{eq:rsigma2} hold if 
			\begin{equation}\label{eq:rsigma3}
				\s \in \left]0,\frac{1}{\tau\|L\|^2}\left(1-\frac{\tau}{\chi}\right) \right[,
			\end{equation}
			which yields the step \eqref{ini:13} in Initialization~\ref{ini:1}. Thus, we have determined $\overline{\varepsilon}$ and $\chi$ giving the largest interval for $\tau$ and $\sigma$ such that \eqref{eq:steps0} and \eqref{eq:steps1} are feasible. Step \eqref{ini:14} and \eqref{ini:15} in Initialization~\ref{ini:1} guarantee that \eqref{eq:steps2} holds in view of \eqref{eq:lambda} and \eqref{eq:alphaintervalo}.
			\subsection{Particular instances of Algorithm~\ref{algo:algo1PD}}\label{sec:partcases}
			In this section, we present particular instances of Algorithm~\ref{algo:algo1PD} that are of interest in their own right, as they recover and extend inertial and relaxed versions of relevant methods from the literature. In addition, we provide the values of $\overline{\varepsilon}$ and $\chi$ required to apply Initialization~\ref{ini:1} to these methods. 
			
			In the context of Problem~\ref{pro:mainPD}, let $(\alpha_n)_{n \in \N}$ and $(\lambda_n)_{n \in \N}$ be nonnegative sequences such that $\alpha_n \to \alpha$ and $\lambda_n\to\lambda$. Consider the following cases:		
			\begin{itemize}			
				\item \underline{{\bf Condat--V\~u}}: In the case where $D=0$, Algorithm~\ref{algo:algo1PD} reduces to the relaxed and inertial Condat--V\~u algorithm.
				In this case $\zeta=\nu=\widetilde{\zeta}=0$, $\rho = \frac{2-\varepsilon}{\lambda}-1$. Then, by considering $\varepsilon = \tau/(2\widetilde{\beta})$ we have that \eqref{eq:steps1} holds and \eqref{eq:steps0} reduces to
				\begin{equation}\label{eq:stepsizeCV}
					\s\T \|L\|^2+\frac{\tau}{2\beta}<1,
				\end{equation}
				which corresponds with the condition in \cite[Theorem~3.1(i)]{Condat13}. By taking $\chi = 2\beta$,  $\tau \in ~ ]0,2\beta[$, and $\sigma \in  \left]0,\frac{1}{\tau\|L\|^2}\left(1-\frac{\tau}{2\beta}\right)\right[$, it follows that \eqref{eq:rtau1} holds directly, thus, it can be considered $\overline{\varepsilon}=1$ in Initialization~\ref{ini:1}.
				Moreover, according \eqref{eq:defpsiPD} we have $\psi = 2 - \tau(2\beta(1-\tau\sigma\|L\|^2))^{-1}$. Then, if $\alpha =0$, \eqref{eq:lambdaini} holds if 
				\begin{equation}
					\lambda \in \left]0, 2-\frac{\tau}{2\beta(1-\s\T\|L\|^2)} \right[,
				\end{equation}
				which corresponds to the condition in \cite[Theorem~3.1(ii)]{Condat13}. 

				\item \underline{{\bf Chambolle--Pock}}: If $C = D = 0$, Algorithm~\ref{algo:algo1PD} reduces to the relaxed and inertial Chambolle--Pock algorithm (see \cite[Equation~(38)]{MaulenFierroPeypouquet2023}). In this case, by taking $\zeta = 0$ and $\beta \to +\infty$, and according to~\eqref{eq:rtau1}, we can choose $\overline{\varepsilon} = 0$ in Initialization~\ref{ini:1}. Moreover, $\chi$ can be chosen arbitrarily in $\RPP$, and the only condition to be met is $\s\T\|L\|^2 < 1$. Additionally, from~\eqref{eq:defpsiPD}, we have $\psi \to 2$. Hence,~\eqref{eq:lambdaini} is equivalent to
				\begin{equation}\label{eq:condCP2}
					\frac{2(1 - \alpha)^2}{(1 - \alpha)^2 + \alpha(1 + \alpha)} > \lambda.
				\end{equation}
				Notice that, if $\alpha = 0$, \eqref{eq:condCP2} holds for $\lambda \in\, ]0, 2[$, and if $\lambda = 1$, it holds for $\alpha \in\, ]0, 1/3[$.
				
				\item \underline{{\bf Chambolle--Pock combined with FBF}}: In the case where $C=0$, Algorithm~\ref{algo:algo1PD} is a relaxed and inertial algorithm combining Chambolle--Pock and FBF (see \cite[Eq. (44)]{Roldan20254op}). If we take $\beta \to +\infty$, in view of \eqref{eq:steps1}, we have $\varepsilon \to 0$. Hence,  it can be considered $\overline{\varepsilon}=0$ and $\chi=1/\zeta$ in Initialization~\ref{ini:1} (see \eqref{eq:rtau2}). 
				%
				\item \underline{{\bf Forward-Backward-Half-Forward}}: In the case where $B=0$ and $L=0$ Algorithm~\ref{algo:algo1PD} iterates as follows:
				\begin{equation}\label{eq:algoFBHF}
					(\forall n\in\N)\quad 
					\begin{array}{l}
						\left\lfloor
						\begin{array}{l}
							p_{n} = z_n+\alpha_n(z_n-z_{n-1})\\
							x_{n} = J_{\tau A} (p_n-\tau (Dp_{n}+C 
							p_n) 
							) \\
							w_{n+1} = x_{n}-\tau(Dx_{n}-Dp_{n})\\
							z_{n+1} = \lambda w_{n+1} + (1-\lambda)p_n.
						\end{array}
						\right.
					\end{array}
				\end{equation}
				This recurrence is a relaxed and inertial version of FBHF.		
				Initialization~\ref{ini:1} can be considered in this case with $\sigma=0$ ($\kappa_2=0$) and $\overline{\varepsilon}$ and $\chi$ as in \eqref{eq:defepsilon} and \eqref{eq:rtau2}, respectively. Moreover, by \eqref{eq:rtau2}, the step-size $\tau$ must satisfy the condition
				\begin{equation}
					\tau \in 
					\left]0,\frac{4\beta}{1+\sqrt{1+16\beta^2\zeta^2}}\right[,
				\end{equation}
				which corresponds to the step-size condition for FBHF proposed in	\cite[Theorem~2.3]{BricenoDavis2018}. 
				
				Note that the step-size requirements for FBHF were more easily derived, even with inertial and relaxation steps, than those obtained in \cite[Appendix~A.3]{Giselsson2021NFBS}. This highlights that Assumption~\ref{asume:1} simplifies the process of obtaining step-size constraints and, consequently, the practical implementation of the algorithm.
				
				To our knowledge, no previous work in the existing literature has presented inertial or relaxed versions of FBHF.
				
				Note that, in this setting, the operator $-(\gamma_n\bm{M}_n - \bm{S})=\tau(D,0)$ is monotone, thus, we can consider $\nu=0$. 
				\item \underline{{\bf Forward-Backward-Forward}}: In the case where $B=0$, $L=0$, and $C=0$,  Algorithm~\ref{eq:algo1PD} reduces to the inertial and relaxed FBF with constant step-sizes studied in \cite{BotrelaxFBF2023}. If we take $\beta \to +\infty$, in view of \eqref{eq:defepsilon} and \eqref{eq:rtau2}, we have $\overline{\varepsilon} \to 0$ and $\chi \to 1/\zeta$, which values can be considered in Initialization~\ref{ini:1} together with $\sigma = 0$ ($\kappa_2=0$). In this setting we have $-(\gamma_n\bm{M}_n - \bm{S})=\tau D$, which is a monotone operator. Then $\nu=0$ and $\psi = 2/(1+(\tau\zeta)^2)$ (see \eqref{eq:defpsiPD}). Therefore \eqref{eq:lambdaini} is equivalent to
				\begin{equation}\label{eq:condFBF}
					\frac{2(1-\alpha)^2}{((1-\alpha)^2+\alpha(1+\alpha))(1+\zeta^2\tau^2)} >  \lambda.
				\end{equation}
				Note that this condition is slightly less restrictive than the condition in \cite[Equation~(25)]{BotrelaxFBF2023}, where $\nu$ was taken as $2\zeta\tau$. Indeed, since $\zeta\tau<1$,
				\begin{align*}
					\frac{2(1-\alpha)^2}{((1-\alpha)^2+\alpha(1+\alpha))(1+\zeta^2\tau^2)} &>\frac{2(1-\alpha)^2}{((1-\alpha)^2+\alpha(1+\alpha))(1+\zeta\tau)}.
				\end{align*}
			\end{itemize}

			\section{Numerical simulations.}\label{se:NE}
			In this section, we present three numerical experiments for testing the inclusion of inertial and relaxation steps in NFB\footnote{All numerical experiments were implemented in MATLAB on a desktop computer equipped with an Intel Core i7-14700K processor (3.4/5.6~GHz), 64~GB of RAM, and running Windows~11 Pro 64-bit. The code is available in this  \href{https://github.com/cristianvega1995/RELAXED-AND-INERTIAL-NONLINEAR-FORWARD-BACKWARD-ALGORITHM}{repository}}. In particular, we implement  FBHF and FPDHF for solving optimization problems with affine constraints and image restoration tasks. 

			\subsection{ Optimization with affine constraints.} 
			Let $(N,m,p) \in \N^3$, $M \in \R^{m \times N}$, $b \in \R^m$, $s_1, \ldots, s_p \in \RR^N$, and $\mathcal{S} =\{x\in \R^{p}\mid \left(\forall i \in \{1,\ldots,p\}\right) \quad s_i^\top x \leq 0\}$. Consider the following optimization problem presented in \cite[Section~7.1]{BricenoDavis2018}
			\begin{equation}\label{eq:nonlinear}
				\min_{x\in \mathcal{S}} \iota_{[0,1]^{N}}(x)+\frac{1}{2}\|Mx-b\|^{2}.
			\end{equation}
			This problem is equivalent to (see \cite[Section~6.3]{BricenoDavis2018})
			\begin{equation}\label{eq:oncluenonlinear}
				\textnormal{ find } \quad (x,u) \in \R^{N} \times \R^p,  \quad \textnormal{ such that } (0,0) \in A(x,u) + C(x,u) + D(x,u),
			\end{equation}
			where $A = \mathcal{N}_{[0,1]^N} \times \mathcal{N}_{\RP^p}$, $C(x,u) = (M^\top (Mx-b),0)$, $D(x,u) = (S^\top u,-Sx)$, and $S = [s_1, \ldots, s_p]^\top$.
			Note that $C$ is $(\|M\|^{-2})$-cocoercive and $D$ is $\|S\|$-Lipschitz.  Hence, \eqref{eq:oncluenonlinear} is a particular instance of Problem~\ref{pro:mainPD} by considering $B=0$ and $L=0$ and it can be solved by FBHF. In view of \cite[Proposition~23.18]{bauschkebook2017}, for $(x,u) \in \R^N\times \R^p$, we have $J_{\tau A}(x,u) = ((\min(\max(x_i,0),1))_{1\leq i\leq N},(\max(u_j,0)_{1\leq j\leq p}))$. \cblue{We solve this problem repeatedly using FBHF with and without inertial and relaxation steps. We test different values of the relaxation parameters as well as both constant and decreasing inertial sequences. In particular,} we consider nine instances of $(N,m,p)$, as presented in Table~\ref{Tab:FBHFresult}. The implemented inertial and relaxation parameters are listed in Table~\ref{Tab:decinertial} and Table~\ref{Tab:FBHFresult}. Note that these parameters depend on $\overline{\alpha}=\overline{\alpha}(\tau \|R\|,\varepsilon,\lambda)$ and $\psi=\psi(\tau \|R\|,\varepsilon)$, defined in \eqref{eq:alphaintervalo} and \eqref{eq:defpsi}, respectively.  The step-size is set to $\tau = 2\varepsilon/\|M\|^2$ for $\varepsilon = t(1+\sqrt{1+16\|S\|^2/\|M\|^4})^{-1}$ and $t \in [0,1[$. \cblue{The specific value of $t$ used in each case is given in Table~\ref{Tab:FBHFresult}}.  The stopping criterion is a relative error with tolerance $10^{-6}$ and a maximum of $10^6$ iterations. \cblue{For each of the nine instances of} $(N,m,p)$ described in Table~\ref{Tab:FBHFresult}, we generate 20 random realizations of the matrices $M$ and $S$ using the MATLAB function {\it randn}. The results expressed as the average number of iteration and average CPU time in seconds, \cblue{over each group of 20 realizations}, are reported in Table~\ref{Tab:FBHFresult}. From this table, we observe that, in all \cblue{nine} cases, FBHF outperforms both IFBHF \cblue{(inertial FBHF)} and RIFBHF \cblue{(relaxed and inertial FBHF)} in terms of the average number of iterations (IN) and average computation time (T). This indicates that, in general, selecting larger step-sizes yields better numerical performance than prioritizing larger relaxation parameters at the expense of reducing step-sizes. On the other hand, the best instance of DIFBHF (decreasing inertial FBHF) provides better results in eight out of the nine \cblue{dimension settings}. It is worth to \cblue{mention} that the single \cblue{dimension}, namely $(N,m,k)=(2000,500,100)$, where it does not lead to acceleration, only 2 out of 20 random realizations \cblue{for $M$ and $S$} failed to converge, resulting in a high average value. However, if we consider a sequence of smaller inertial parameters the average results are better \cblue{for this dimension}, as we have shown in Table~\ref{Tab:FBHFresult2}. This highlights the benefit of using decreasing inertial parameters, which allows for larger step-sizes and larger relaxation parameters in the initial iterations. \cblue{In Figure~\ref{fig:resFBHF}, we plot the residual value, given by $\|p_n-x_n-\tau (Dp_n+Cp_n)+\tau( Dx_n + Cx_n)\|$ (see \eqref{eq:algoFBHF}), versus the iteration number for the best cases of IFBHF, DIFBHF, and RIFBHF, considering three different values of $(N,m,p)$. For each $(N,m,p)$ case, the plot corresponds to the first of the 20 random realizations of 
				$M$ and $S$. From this figure, we observe that the residual value decreases faster for the decreasing inertial parameters.} A drawback of this decreasing scheme is the lack of a general rule to select the sequence of inertial parameters. For instance, in five cases the best performance is obtained with the sequence $(\alpha_n^2)_{n}$ while in three cases it is achieved with $(\alpha_n^3)_{n \in \N}$. In addition, the sequence $(\alpha_n^1)_{n}$ consistently yields worse convergence results.
			\begin{table}[htbp]
				\centering
				\renewcommand{\arraystretch}{1.2}
				\setlength{\tabcolsep}{6pt}
				\begin{tabular}{|c|c|}
					\hline 
					\multirow{3}{*}{Decreasing Inertia} 
					& $\alpha_n^1=(1+0.001\cdot n\cdot(\log(n))^{1.001})^{-1}$ \\ 
					& $\alpha_n^2=(3+0.00001\cdot n\cdot(\log(n))^{1.00001})^{-1}$\\  
					& $\alpha_n^3=(9+0.00001\cdot n\cdot(\log(n))^{1.00001})^{-1}$ \\ 
					\hline
				\end{tabular}
				\caption{Decreasing and summable inertial parameters.}\label{Tab:decinertial}
			\end{table}
			\begin{table}[h!]
				\centering
				\begin{tabular}{cccccccccc}
					&  &  &  & \multicolumn{2}{c}{$(N,m,p)$} & \multicolumn{2}{c}{$(N,m,p)$} & \multicolumn{2}{c}{$(N,m,p)$} \\
					\cmidrule(lr){5-10}
					Algorithm & $\alpha_n$ & $\lambda$ & $t$ & IN & T & IN & T & IN & T \\
					\toprule\\[-0.5em]
					& & & & \multicolumn{2}{c}{(2000,1000,100)} & \multicolumn{2}{c}{(2000,1000,500)} & \multicolumn{2}{c}{(2000,1000,800)} \\
					\cmidrule(lr){5-10}
					FBHF & 0 & 1 & 0.999
					& 16955 & 2.95
					& 17975 & 8.02
					& 36452 & 27.25 \\
					\hline
					\multirow{3}{*}{IFBHF} 
					& \multirow{3}{*}{$\epsilon\cdot\overline{\alpha}$}& 1 & 0.8 & 17965 & 3.31 & 19127 & 8.50 & 38634 & 28.87 \\
					&  & 1 & 0.9 & 17237 & 3.16 & 18292 & 8.20 & 37054 & 27.26 \\
					&  & 1 & 0.999 & 16943 & 3.04 & 17961 & 8.08 & 36426 & 26.67 \\
					\hline
					\multirow{3}{*}{DIFBHF} 
					& $\alpha_n^1$ & 1 & 0.999 & 26638 & 4.69 & 30625 & 13.65 & 60074 & 44.93 \\
					& $\alpha_n^2$ & 1 & 0.999 & {\bf 13252} & {\bf 2.34} & {\bf 13680} & {\bf 5.83} & {\bf 29134} & {\bf 21.94} \\
					& $\alpha_n^3$ & 1 & 0.999 & 15667 & 2.76 & 16477 & 7.16 & 33745 & 25.18 \\
					\hline
					\multirow{2}{*}{RIFBHF} 
					& \multirow{2}{*}{$\epsilon\cdot\overline{\alpha}$} &  $0.95\cdot \psi$  & 0.9 & 17132 & 3.06 & 18179 & 7.98 & 36839 & 27.45 \\
					&  & $\epsilon \cdot \psi$ & 0.9 & 17078 & 3.05 & 18116 & 8.18 & 36721 & 27.35 \\
					\toprule\\[-0.5em]
					& & & & \multicolumn{2}{c}{(1500,1000,100)} & \multicolumn{2}{c}{(2500,1000,100)} & \multicolumn{2}{c}{(3500,1000,100)} \\
					\cmidrule(lr){5-10}
					FBHF & 0 & 1& 0.999
					& 10853 & 0.95
					& 1199 & 0.61
					& 701 & 1.11 \\
					\hline
					\multirow{3}{*}{IFBHF} 
					& \multirow{3}{*}{$\epsilon\cdot\overline{\alpha}$} & 1 & 0.8 & 11574 & 1.00 & 1270 & 0.64 & 782 & 1.22 \\
					&   & 1 & 0.9 & 11051 & 0.95 & 1205 & 0.60 & 732 & 1.16 \\
					&   & 1 & 0.999 & 10844 & 0.93 & 1198 & 0.60 & 700 & 1.11 \\
					\hline
					\multirow{3}{*}{DIFBHF} 
					& $\alpha_n^1$ & 1 & 0.999 & 16946 & 1.44 & 44556 & 22.75 & 98885 & 153.93 \\
					& $\alpha_n^2$ & 1 & 0.999 & {\bf 8042} & {\bf 0.69} & 80181 & 40.96 & 100000 & 158.33 \\
					& $\alpha_n^3$ & 1 & 0.999 & 9898 & 0.86 & {\bf 1061} & {\bf 0.55} & {\bf 629} & {\bf 0.98} \\
					\hline
					\multirow{2}{*}{RIFBHF} 
					& \multirow{2}{*}{$\epsilon\cdot\overline{\alpha}$} &  $0.95\cdot \psi$ & 0.9 & 10980 & 0.95 & 1216 & 0.61 & 711 & 1.13 \\
					&  & $\epsilon \cdot \psi$ & 0.9 & 10941 & 0.95 & 1211 & 0.61 & 708 & 1.12\\
					\toprule\\[-0.5em]
					& & & & \multicolumn{2}{c}{(2000,500,100)} & \multicolumn{2}{c}{(2000,800,100)} & \multicolumn{2}{c}{(2000,1500,100)} \\
					\cmidrule(lr){5-10}
					FBHF & 0 & 1 & 0.999
					& {\bf 530} & {\bf 0.11}
					& 1212 & 0.21
					& 16586 & 3.39 \\
					\hline
					\multirow{3}{*}{IFBHF} 
					& \multirow{3}{*}{$\epsilon\cdot\overline{\alpha}$} & 1 & 0.8 & 574 & 0.12 & 1315 & 0.24 & 17679 & 3.65 \\
					&   & 1 & 0.9 & 536 & 0.11 & 1243 & 0.22 & 16887 & 3.49 \\
					&   & 1 & 0.999 & 530 & 0.11 & 1210 & 0.21 & 16573 & 3.40 \\
					\hline
					\multirow{3}{*}{DIFBHF} 
					& $\alpha_n^1$ & 1 & 0.999 & 80474 & 15.98 & 28818 & 4.65 & 24484 & 5.00 \\
					& $\alpha_n^2$ & 1 & 0.999 & 100000 & 22.19 & 80198 & 14.03 & {\bf 12511} & {\bf 2.52} \\
					& $\alpha_n^3$ & 1 & 0.999 & 10421 & 1.68 & {\bf 1094} & {\bf 0.18} & 15168 & 3.10 \\
					\hline
					\multirow{2}{*}{RIFBHF} 
					& \multirow{2}{*}{$\epsilon\cdot\overline{\alpha}$} &  $0.95\cdot \psi$ & 0.9 & 538 & 0.11 & 1229 & 0.21 & 16779 & 3.42 \\
					&  & $\epsilon \cdot \psi$ & 0.9 & 536 & 0.12 & 1224 & 0.21 & 16720 & 3.34\\
					\midrule
				\end{tabular}
				\caption{Comparison of FBHF with and without inertial and relaxation steps. The step-size implemented is  $\tau = 2\varepsilon/\|M\|^2$ for $\varepsilon = t(1+\sqrt{1+16\|R\|^2/\|M\|^4})^{-1}$ and $t \in [0,1[$. We set $\epsilon =0.9999$. The parameters $\overline{\alpha}=\overline{\alpha}(\tau \|R\|,\varepsilon,\lambda)$ and $\psi=\psi(\tau \|R\|,\varepsilon)$ are defined in \eqref{eq:alphaintervalo} and \eqref{eq:defpsi}, respectively. The decreasing inertial parameters $\alpha_n^1$, $\alpha_n^2$, $\alpha_n^3$ are defined in Table~\ref{Tab:decinertial}. The best results obtained in terms of average number of iterations \cblue{(IN)} and average CPU time \cblue{(T)} are highlighted in bold.}\label{Tab:FBHFresult}
			\end{table}
			\begin{table}[htbp]
				\centering
				\captionsetup{width=\linewidth}
				\renewcommand{\arraystretch}{1.2}
				\setlength{\tabcolsep}{6pt}
				\begin{tabular}{|c|c|c|c|cc|}
					\hline
					Algorithm & $\alpha_n$ & $\lambda$ & $t$ 
					& IN & T \\
					\hline
					DIFBHF
					& $(10+0.00001\cdot n\cdot(\log(n))^{1.00001})^{-1}$ & 1 & 0.999 & 477 & 0.08 \\
					\hline
				\end{tabular}
				\caption{Result of DIFBHF for a sequence of smaller inertial parameters and $(N,m,k)=(2000,500,100)$ \cblue{in terms of average number of iterations (IN) and average CPU time (T)}.}\label{Tab:FBHFresult2}
			\end{table}
			\begin{figure}
				\centering
				\subfloat[$(N,m,k)=(2000,1000,500)$]{\label{fig:resFBHF1}\includegraphics[width=0.32\textwidth]{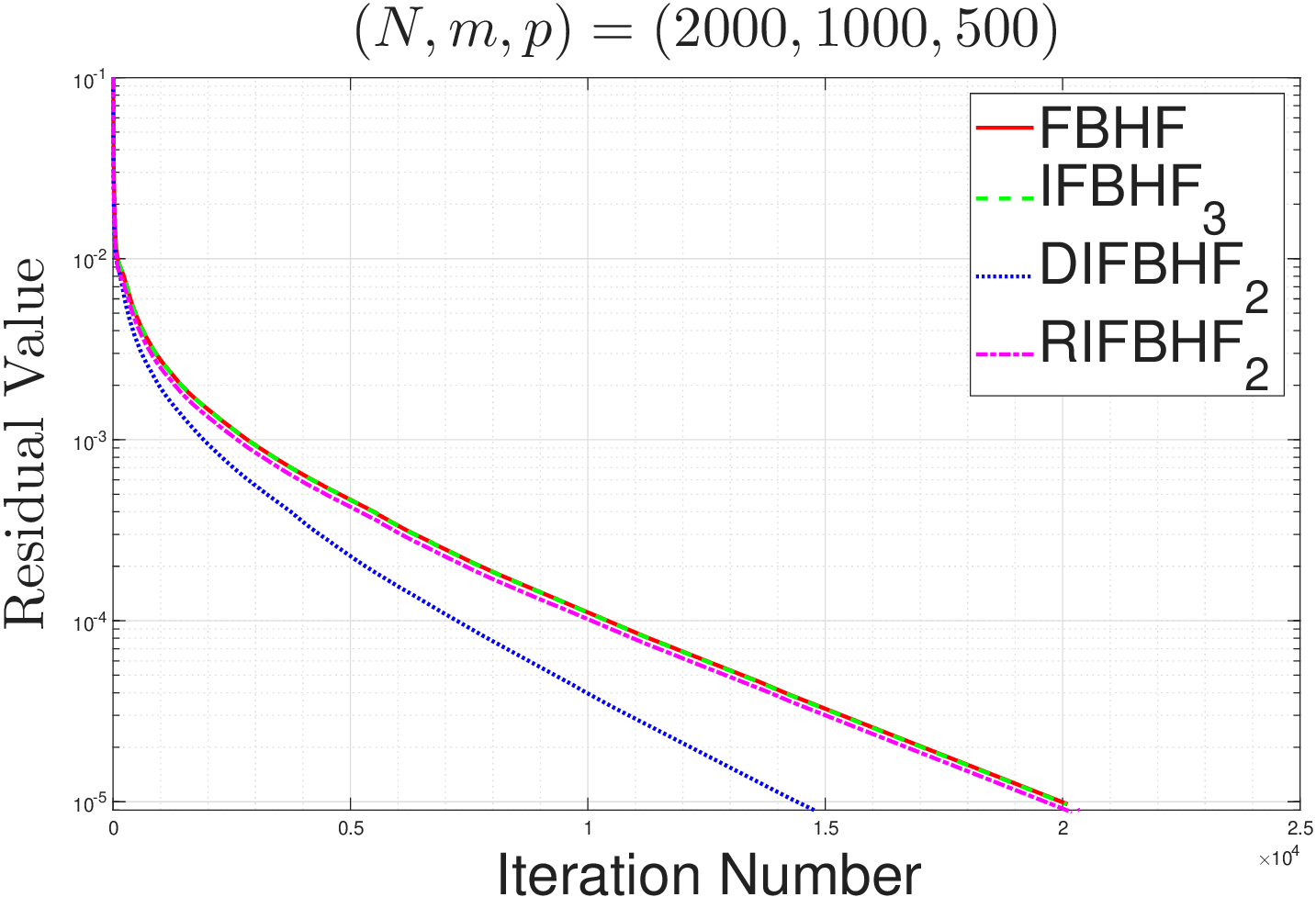}}\,
				\subfloat[$(N,m,k)=(2500,1000,100)$]{\label{fig:resFBHF2}\includegraphics[width=0.32\textwidth]{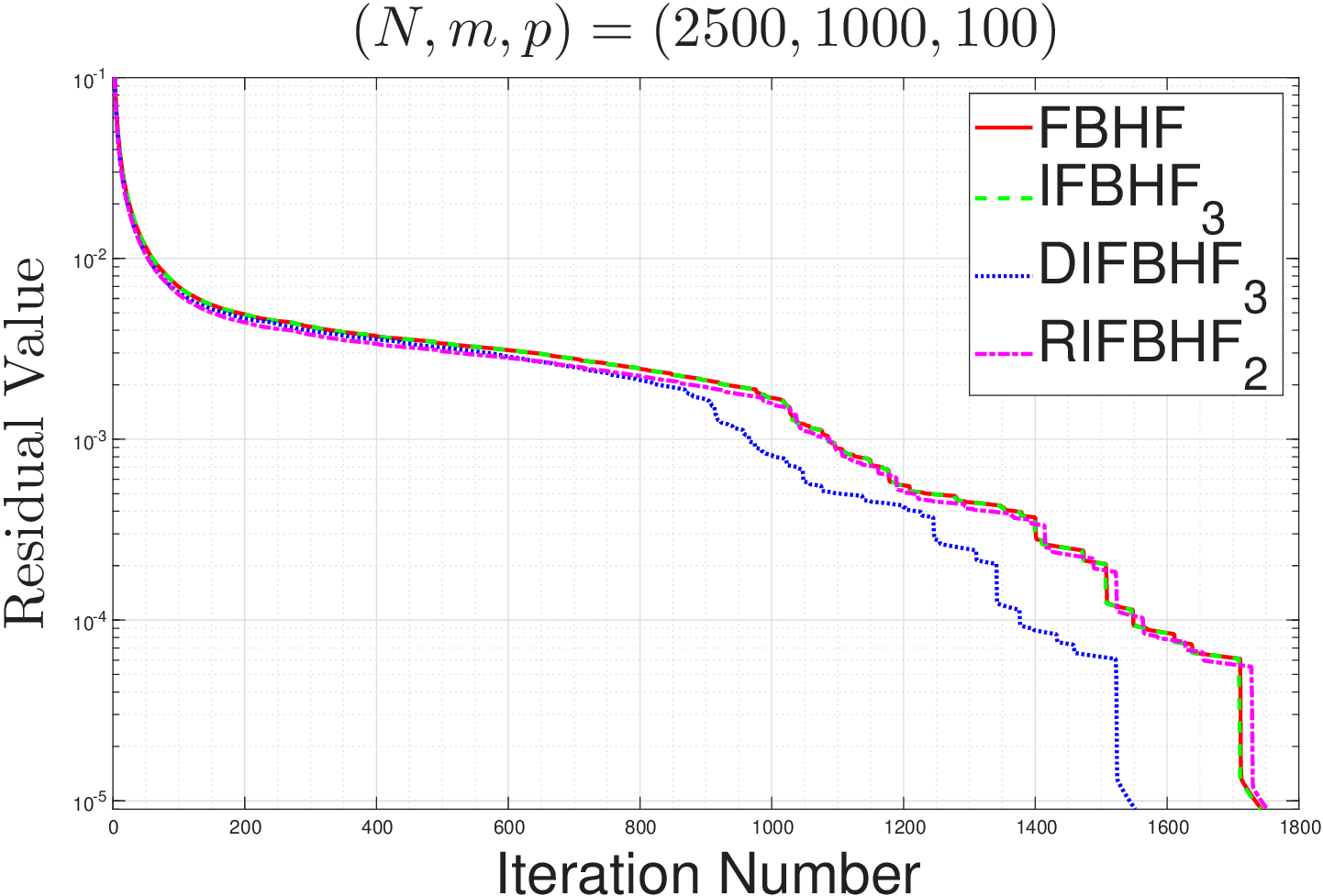}}
				\subfloat[$(N,m,k)=(2000,800,100)$]{\label{fig:resFBHF3}\includegraphics[width=0.32\textwidth]{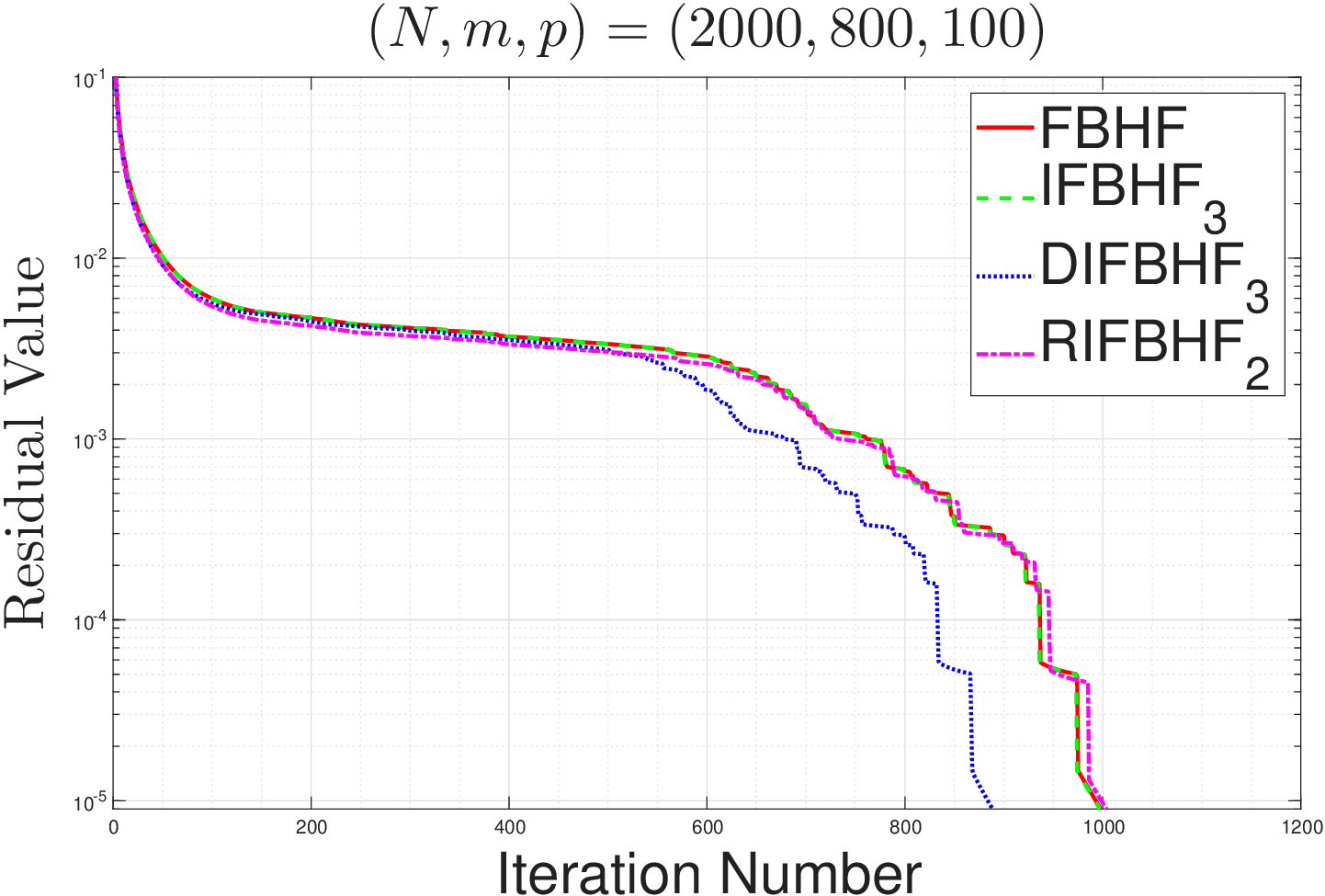}}
				\captionsetup{width=\textwidth} \caption{\cblue{Residual value ($\|p_n-x_n-\tau (Dp_n+Cp_n)+\tau( Dx_n + Cx_n)\|$) for FBHF and for the best performing instances of IFBHF, DIFBHF, and RIFBHF. The graphs display the results for the first of the 20 random realizations of $M$ and $S$.}}
				\label{fig:resFBHF}
			\end{figure}
			\subsection{Image restoration}\label{sec:imagerestoration}
			{To compare the inertial and relaxed version of FPDHF with \cblue{its} classical version, we consider the image restoration problem proposed in \cite[Section~6]{Roldan20254op}. 
				
				Let $(N,M) \in \N^2$ and let $x \in \R^{N\times N}$ be an image to be recovered from 
				an observation
				\begin{equation}\label{eq:modelim}
					z = Tx+\epsilon,
				\end{equation}	
				where { $T\colon \R^{N\times N}\times \R^{M\times M}$} is an operator representing a blur process and $\epsilon$ 
				is an additive noise perturbing the observation. The original image $x$ can be approximated by solving the following optimization problem.
				\begin{equation}\label{eq:opex}
					\min_{x \in [0,1]^{N\times N}} 
					\frac{1}{2}\|Tx-z\|^2+\mu_1\|\nabla 
					x\|_1+\mu_2 H_\delta(Wx),
				\end{equation}
				where $(\mu_1,\mu_2) \in \RPP^2$ are regularization parameters, $\|\cdot\|_1$ is the 
				$\ell^1$ norm, and $H_\delta:\R^{N\times N}\to \R$ is the Huber function defined for $\delta >0$ (see \cite{BricenoPustelnik2023} for details on the Huber function). 
				%
				The linear operator $\nabla \colon \R^{N\times N} \to \R^{N \times N} \times \R^{N \times N}$ is the 
				discrete gradient with Neumann boundary conditions. Its adjoint $\nabla^*$ is 
				the discrete divergence \cite{TV-chambolle}.
				The operator $W:\R^{N\times N}\to \R^{N\times N}$ is an orthogonal basis wavelet transform (we assume that $N$ is a power of 2 and we choose a Haar basis of level 3). For further details on the model see \cite[Section~6]{Roldan20254op}. 
				By setting $f=\iota_{[0,1]^{N\times N}}$, 
				$g=\mu_1\|\cdot\|_1$, $L=\nabla$, $d = \|T(\cdot)-z\|_2^2/2$, and $h =\mu_2 (H_\delta \circ  W)$, the optimization problem in equation \eqref{eq:opex} is a particular instance of the problem in \eqref{eq:probopti} and it can be solved by FPDHF in its variational version \eqref{eq:algo1opti}. For $x \in \R^{N\times N}$, we have $\prox_{\tau f} (x) = ((\min(\max(x_{i,j},0),1))_{1\leq i,j\leq N}$. Closed formulas for $\prox_{\sigma g^*}$ and $\nabla h$, can be found in \cite[Example~24.20]{bauschkebook2017} and \cite[Section~6.1]{ChouzenouxPesquetRoldan2023}, respectively. Note that, $\nabla d$ is 
				$\beta$-cocoercive and $\nabla h$ is 
				$\zeta$-Lipschitz, where $\beta = \|T\|^{-2}$ and $\zeta = \mu_2/\delta$. In the numerical experiments, we consider three scenarios for the operator $T$ being a blur model performed by a $3\times 3$ averaging kernel, $9\times 9$ averaging kernel, and $3\times 3$ with Gaussian kernel. Particularly, $T$ was implemented using the imfilter function in MATLAB with symmetric boundary conditions (we have $\|T\|=1$ in all cases). The observation $z$ is obtained through \eqref{eq:modelim} where $\epsilon$ is an additive zero-mean white Gaussian noise with standard deviation $10^{-3}$ (implemented by the imnoise function in MATLAB). We generate 20 random observations trough $T$ and $\epsilon$ for $N \in \{128,256,512\}$. We set $\mu_1=10^{-2}$, $\mu_2 =10^{-3}$ and $\delta = 10^{-2}$ in our experiments (see \cite[Section~6]{Roldan20254op} for a discussion on these parameters). We consider Initialization~\ref{ini:1} for running our experiments with $\varepsilon = 2t(1+\sqrt{1+\zeta^2\beta^2})^{-1}$, for $t \in ]0,1[$, and $\chi$ defined in \eqref{eq:rtau2}. Furthermore, we first tune up the parameter $\kappa_1$ to obtain the best result of convergence in classical FPDHF. The implemented parameters $\kappa_1$ and $\kappa_2$, for each case, are described in Table~\ref{Tab:FPDHFresult}. From this table we observe that the constant inertial version (IFPDHF) and the relaxed-inertial version (RIFBHF) are comparable in terms of iteration number and CPU time to the classical version of FPDHF. Note that, despite RIFBHF present lower average iteration number, it needs more time to complete the iterations, this is attributable to the additional inertial and relaxation step on its iterations. On the other hand, from Table~\ref{Tab:FPDHFresult}, we observe that the \cblue{decreasing inertial version (DIFPDHF)}, produce numerical advantages in terms of both, iteration number and CPU time. In particular, the sequence $(\alpha_n^2)_{n \in \N}$ has the best performance reducing over $20\%$ of average iteration number and CPU time in comparison to FPDHF, for the $3\times 3$ averaging kernel and $9\times 9$ averaging kernel. On the other hand, the sequence $(\alpha_n^1)_{n \in \N}$ provides better results for the $3 \times 3$ Gaussian Kernel reducing almost $50\%$ of average iteration number and CPU time. However, the lacking of a rule for choosing the inertial parameter is a drawback of this method as we mentioned in previous section.
				
				From Table~\ref{Tab:FPDHFresult}, we observe that the constant inertial version (IFPDHF) and the relaxed-inertial version (RIFPDHF) are comparable to the classical FPDHF in terms of both iteration number and CPU time. Although RIFPDHF achieves slightly lower average iteration numbers, it requires more time to complete the iterations. This increase in computational time can be attributed to the additional inertial and relaxation steps involved in each iteration. On the other hand, Table~\ref{Tab:FPDHFresult} clearly shows that using decreasing inertial sequences provides numerical advantages in both iteration number and CPU time. In particular, the sequence $(\alpha_n^2)_{n \in \N}$ yields the best overall performance, achieving more than a $20\%$ reduction in average iteration number and CPU time compared to FPDHF for both the $3\times 3$ and $9\times 9$ averaging kernels. Moreover, the sequence $(\alpha_n^1)_{n \in \N}$ performs best in the case of the $3\times 3$ Gaussian kernel, with reductions of nearly $50\%$ in both metrics. \cblue{In Figure~\ref{fig:resFBHF}, we plot the residual value, given by $\|(p_n-w_{n+1}-\tau(L^*(q_n-v_{n+1})+Cp_n-Cx_n),q_n-v_{n+1}+\sigma (L(w_{n+1}-p_n))\|$ (see \eqref{eq:algo1PD}),  versus the iteration number for the best cases of IFPDHF, DIFPDHF, and RIFPDHF, considering three different blur operators and dimensions. In each case, the plot corresponds to the first of the 20 random realizations of $z$. From Figures~\ref{fig:resFPDHF1} and \ref{fig:resFPDHF2}, we observe that the residual value decreases faster for the decreasing inertial parameters. In Figure~\ref{fig:resFPDHF3}, we observe that the decreasing inertial parameters produce a slower decay of the residual value during the first iterations; however, the convergence accelerates in the final iterations, reaching the stopping criterion earlier.} As discussed in the previous section, a drawback of using decreasing inertial parameters is the lack of a general rule for choosing the sequence. The original, blurred, and noisy images are shown in Figures~\ref{fig:x128}--\ref{fig:x512}, along with the recovered images for specific instances described in the corresponding captions.
					\begin{table}[htbp]
						\centering
						\captionsetup{width=\linewidth}
						\renewcommand{\arraystretch}{1.2}
						\setlength{\tabcolsep}{6pt}
						\begin{tabular}{cccccccccc}
							&  &  &  & \multicolumn{2}{c}{$(N,\kappa_1,\kappa_2)$} & \multicolumn{2}{c}{$(N,\kappa_1,\kappa_2)$} & \multicolumn{2}{c}{$(N,\kappa_1,\kappa_2)$} \\
							\cmidrule(lr){5-10}
							Algorithm & $\alpha_n$ & $\lambda$ & $t$ & IN & T & IN & T & IN & T \\
							\toprule\\[-0.5em]
							\multicolumn{4}{c}{ $3\times 3$ Averaging Kernel} 
							& \multicolumn{2}{c}{(128,0.17,0.99)} 
							& \multicolumn{2}{c}{(256,0.24,0.99)} 
							& \multicolumn{2}{c}{(512,0.31,0.99)} \\
							\cmidrule(lr){5-10}
							FPDHF & 0 & 1 & 0.999 
							& 865 & 0.87 
							& 481 & 1.51 
							& 494 & 14.81 \\
							\hline
							\multirow{3}{*}{IFPDHF} 
							& \multirow{3}{*}{$\epsilon\cdot\overline{\alpha}$} & 1 & 0.8 
							& 901 & 0.91 
							& 614 & 1.96 
							& 592 & 17.55 \\
							& & 1 & 0.9 
							& 797 & 0.80 
							& 502 & 1.60 
							& 493 & 14.63 \\
							& & 1 & 0.999 
							& 852 & 0.86 
							& 474 & 1.51 
							& 488 & 14.52 \\
							\hline
							\multirow{3}{*}{DIFPDHF} 
							& $\alpha_n^1$ & 1 & 0.999 
							& 818 & 0.82 
							& 469 & 1.50 
							& 507 & 15.08 \\
							& $\alpha_n^2$ & 1 & 0.999 
							& {\bf 691} & {\bf 0.70} 
							& {\bf 368} & {\bf 1.18} 
							& {\bf 385} & {\bf 11.44} \\
							& $\alpha_n^3$ & 1 & 0.999 
							& 808 & 0.81 
							& 445 & 1.43 
							& 459 & 13.64 \\
							\hline
							\multirow{2}{*}{RIFPDHF} 
							& \multirow{2}{*}{$\epsilon\cdot\overline{\alpha}$} & $0.95\cdot \psi$ & 0.95 
							& 855 & 0.89 
							& 476 & 1.57 
							& 490 & 15.43 \\
							& & $\epsilon \cdot \psi$ & 0.999 
							& 852 & 0.88 
							& 474 & 1.56 
							& 488 & 15.36 \\
							\toprule\\[-0.5em]
							\multicolumn{4}{c}{ $9\times 9$ Averaging Kernel} 
							& \multicolumn{2}{c}{(128,0.29,0.99)} 
							& \multicolumn{2}{c}{(256,0.52,0.99)} 
							& \multicolumn{2}{c}{(512,0.59,0.99)} \\
							\cmidrule(lr){5-10}
							FPDHF & 0 & 1 & 0.999 
							& 2400 & 2.58 
							& 1395 & 4.45 
							& 1522 & 45.54 \\
							\hline
							\multirow{3}{*}{IFPDHF} 
							& \multirow{3}{*}{$\epsilon\cdot\overline{\alpha}$} & 1 & 0.8 
							& 2991 & 3.24 
							& 1705 & 5.54 
							& 1834 & 54.41 \\
							& & 1 & 0.9 
							& 2502 & 2.72 
							& 1442 & 4.68 
							& 1558 & 46.18 \\
							& & 1 & 0.999 
							& 2369 & 2.57 
							& 1384 & 4.49 
							& 1511 & 44.82 \\
							\hline
							\multirow{3}{*}{DIFPDHF} 
							& $\alpha_n^1$ & 1 & 0.999 
							& 1956 & 2.12 
							& 1276 & 4.15 
							& 1399 & 41.51 \\
							& $\alpha_n^2$ & 1 & 0.999 
							& {\bf 1871} & {\bf 2.03} 
							& {\bf 1097} & {\bf 3.56} 
							& {\bf 1189} & {\bf 35.26} \\
							& $\alpha_n^3$ & 1 & 0.999 
							& 2237 & 2.43 
							& 1302 & 4.23 
							& 1417 & 42.03 \\
							\hline
							\multirow{2}{*}{RIFPDHF} 
							& \multirow{2}{*}{$\epsilon\cdot\overline{\alpha}$} & $0.95\cdot \psi$ & 0.95 
							& 2375 & 2.68 
							& 1387 & 4.65 
							& 1515 & 47.66 \\
							& & $\epsilon \cdot \psi$ & 0.999 
							& 2368 & 2.67 
							& 1384 & 4.64 
							& 1511 & 47.56
							\\
							\toprule\\[-0.5em]
							\multicolumn{4}{c}{ $3\times 3$ Gaussian Kernel} 
							& \multicolumn{2}{c}{(128,0.05,0.99)} 
							& \multicolumn{2}{c}{(256,0.1,0.99)} 
							& \multicolumn{2}{c}{(512,0.1,0.99)} \\
							\cmidrule(lr){5-10}
							FPDHF & 0 & 1 & 0.999 
							& 1972 & 2.04 
							& 1109 & 3.53 
							& 1265 & 37.75 \\
							\hline
							\multirow{3}{*}{IFPDHF} 
							& \multirow{3}{*}{$\epsilon\cdot\overline{\alpha}$} & 1 & 0.8 
							& 2533 & 2.65 
							& 1435 & 4.64 
							& 1647 & 48.68 \\
							& & 1 & 0.9 
							& 2109 & 2.20 
							& 1192 & 3.85 
							& 1362 & 40.23 \\
							& & 1 & 0.999 
							& 1935 & 2.02 
							& 1090 & 3.52 
							& 1244 & 36.73 \\
							\hline
							\multirow{3}{*}{DIFPDHF} 
							& $\alpha_n^1$ & 1 & 0.999 
							& {\bf 1113} & {\bf  1.16 }
							& {\bf  563} & {\bf  1.82} 
							& {\bf  674} & {\bf  19.89} \\
							& $\alpha_n^2$ & 1 & 0.999 
							& { 1555} & { 1.63} 
							& { 857} & { 2.77} 
							& { 984} & { 29.04} \\
							& $\alpha_n^3$ & 1 & 0.999 
							& 1839 & 1.92 
							& 1030 & 3.33 
							& 1176 & 34.72 \\
							\hline
							\multirow{2}{*}{RIFPDHF} 
							& \multirow{2}{*}{$\epsilon\cdot\overline{\alpha}$} & $0.95\cdot \psi$ & 0.95 
							& 1946 & 2.13 
							& 1094 & 3.65 
							& 1249 & 39.12 \\
							& & $\epsilon \cdot \psi$ & 0.999 
							& 1935 & 2.11 
							& 1089 & 3.63 
							& 1243 & 38.98 \\
							\hline
						\end{tabular}
						\caption{Comparison of FPDHF with and without inertial and relaxation steps. The step-size $(\tau,\sigma)$ is generated by Initialization~\ref{ini:1} for $\varepsilon = t(1+\sqrt{1+16\beta^2/\zeta^2})^{-1}$, where $\beta = \|T\|^{-2}$, $\zeta = \mu_2\delta$, $t \in [0,1[$, the parameter $\chi$ is defined in \eqref{eq:rtau2}, and $\kappa_1$, and $\kappa_2$ are specified on this table. We set $\epsilon =0.9999$. The parameters $\overline{\alpha}=\overline{\alpha}(\tau \|R\|,\varepsilon,\lambda)$ and $\psi=\psi(\tau \|R\|,\varepsilon)$ are defined in \eqref{eq:alphaintervalo} and \eqref{eq:defpsi}, respectively. The decreasing inertial parameters $\alpha_n^1$, $\alpha_n^2$, and $\alpha_n^3$ are defined in Table~\ref{Tab:decinertial}. The best results obtained in terms of average number of iterations \cblue{(IN)} and average CPU time \cblue{(T)} are highlighted in bold.}\label{Tab:FPDHFresult}
					\end{table}
					\begin{figure}
						\centering
						\subfloat[\tiny $3\times 3$ Averaging Kernel, $N=128$]{\label{fig:resFPDHF1}\includegraphics[width=0.32\textwidth]{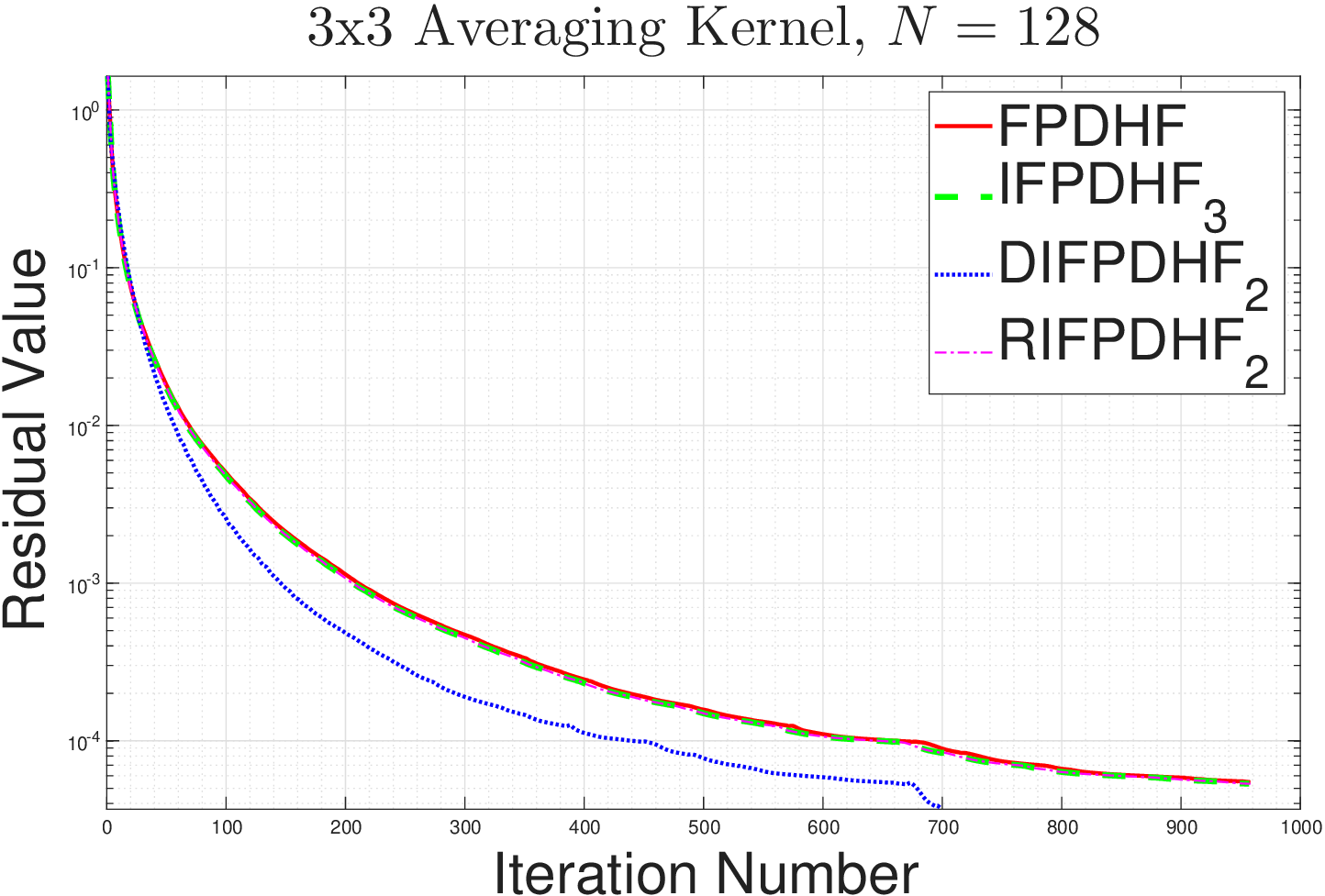}}\,
						\subfloat[\tiny $9\times 9$ Averaging Kernel, $N=256$]{\label{fig:resFPDHF2}\includegraphics[width=0.32\textwidth]{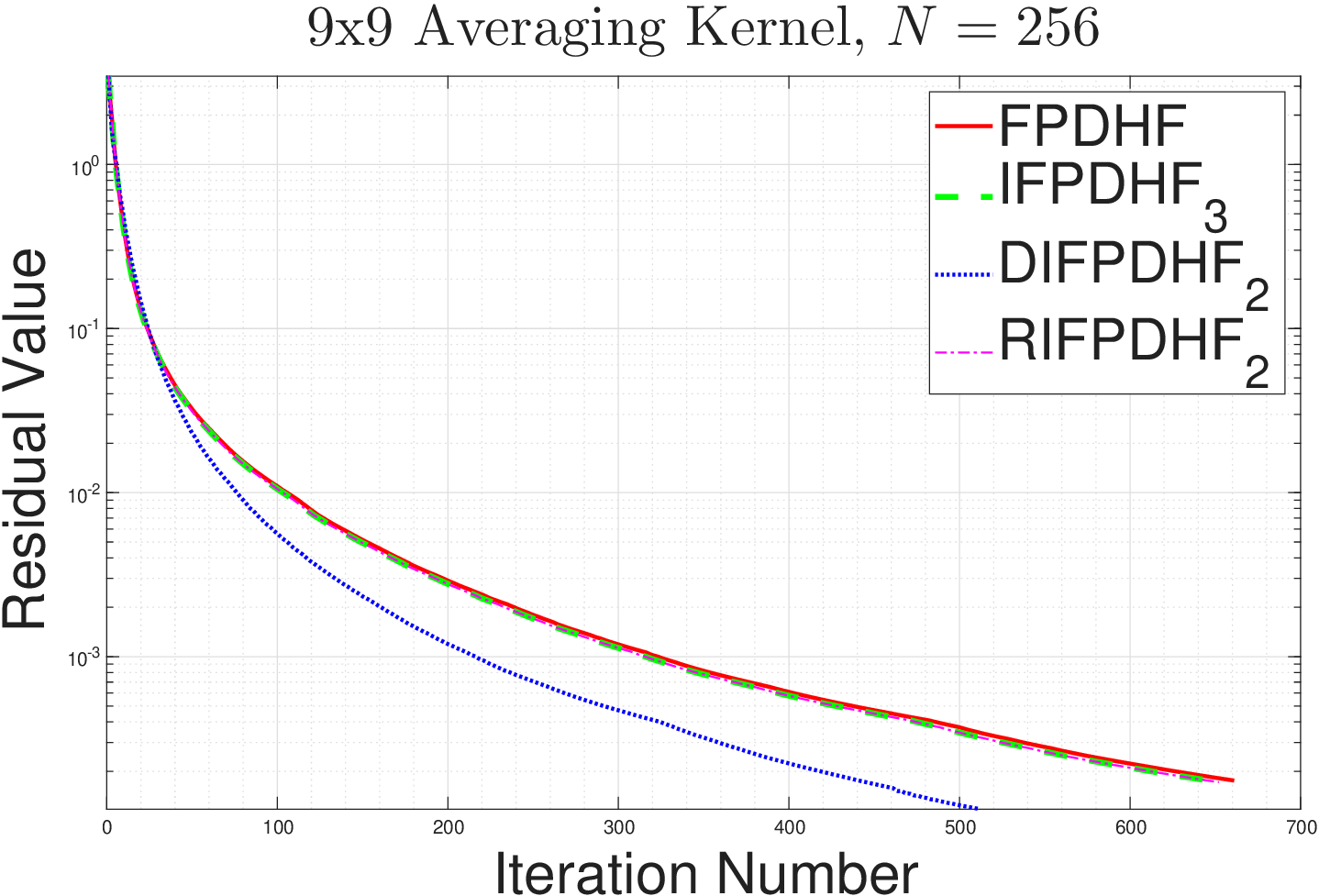}}
						\subfloat[\tiny $3\times 3$ Gaussian Kernel, $N=512$]{\label{fig:resFPDHF3}\includegraphics[width=0.32\textwidth]{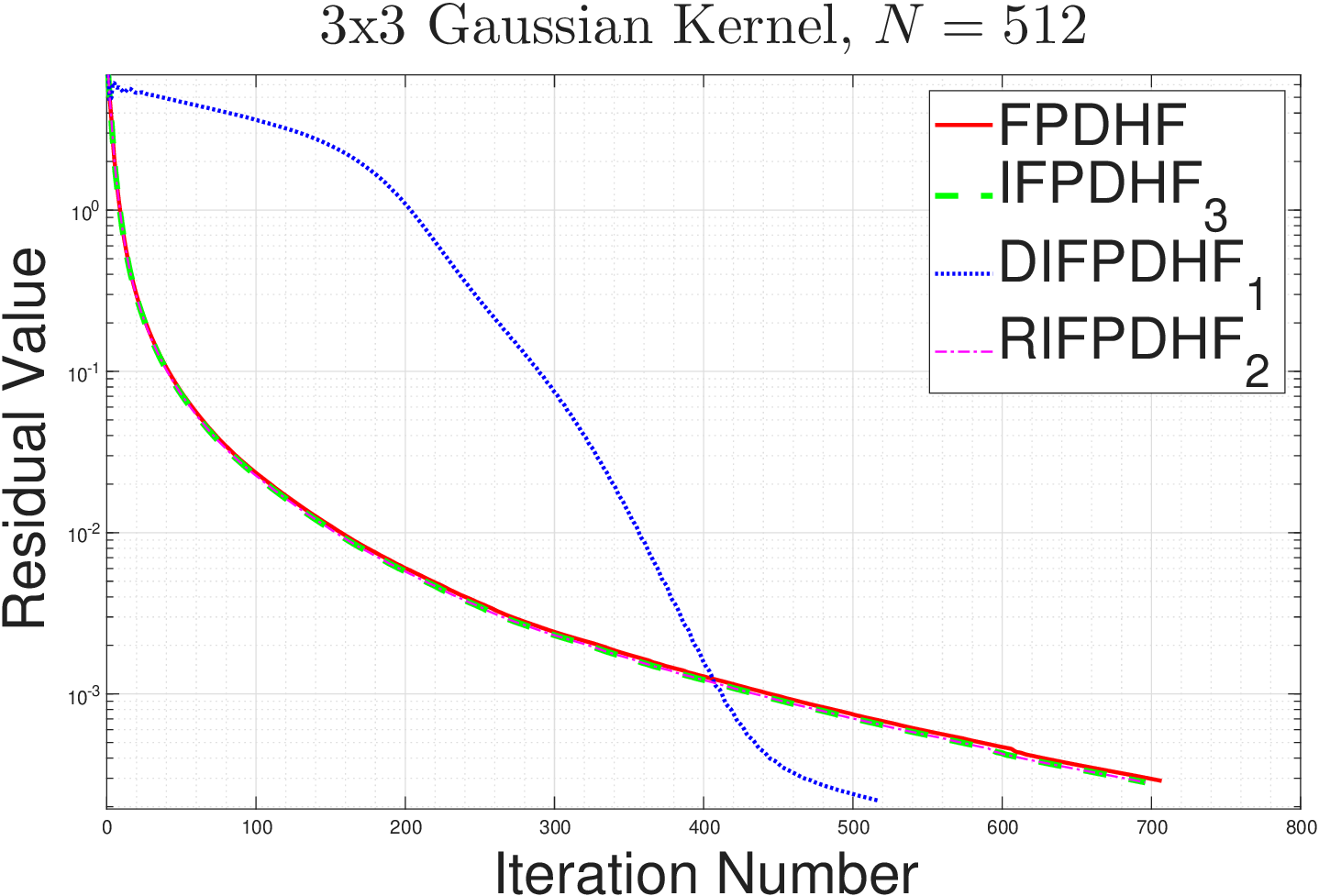}}
						\captionsetup{width=\textwidth} \caption{\cblue{Residual value ($\|(p_n-w_{n+1}-\tau(L^*(q_n-v_{n+1})+Cp_n-Cx_n),q_n-v_{n+1}+\sigma (L(w_{n+1}-p_n))\|$) for FPDHF and for the best performing instances of IFPDHF, DIFPDHF, and RIFPDHF. The graphs display the results for the first of the 20 random realizations of $z$.}}
						\label{fig:resFPDHF}
					\end{figure}
					\begin{figure}
						\centering
						\subfloat[\scriptsize Original Image]{\label{fig:x1281}\includegraphics[width=0.24\textwidth]{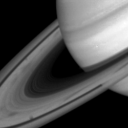}}\,
						\subfloat[\scriptsize $z_{20}$ (PSNR 29.90)]{\label{fig:x1282}\includegraphics[width=0.24\textwidth]{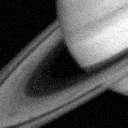}}\,
						\subfloat[\scriptsize FHRB (PSNR 32.24)]{\label{fig:x1283}\includegraphics[width=0.24\textwidth]{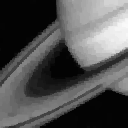}}\,
						\subfloat[\scriptsize FHRBR (PSNR 32.24)]{\label{fig:x1284}\includegraphics[width=0.24\textwidth]{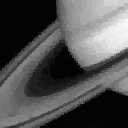}}
						\captionsetup{width=\textwidth} \caption{Original image, blurred and noisy observation $\#$ 20 with a blur of $3\times 3$ averaging kernel, and recovered images for FPDHF and DIFPDHF with $(\alpha_n^2)_{n \in \N}$, with their respective PSNR (dB), for $N=128$.} 
						\label{fig:x128}
					\end{figure}
					\begin{figure}
						\centering
						\subfloat[\scriptsize Original Image]{\label{fig:x2561}\includegraphics[width=0.24\textwidth]{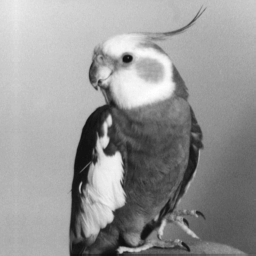}}\,
						\subfloat[\scriptsize $z_{20}$ (PSNR 24.56)]{\label{fig:x2562}\includegraphics[width=0.24\textwidth]{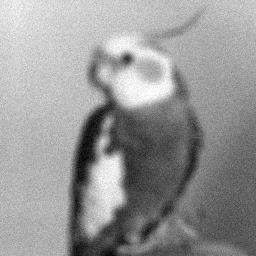}}\,
						\subfloat[\scriptsize FHRB (PSNR 27.94)]{\label{fig:x2563}\includegraphics[width=0.24\textwidth]{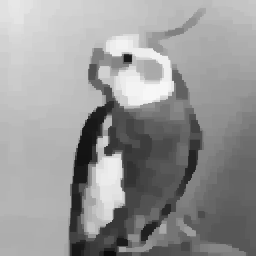}}\,
						\subfloat[\scriptsize FHRBR (PSNR 27.94)]{\label{fig:x2564}\includegraphics[width=0.24\textwidth]{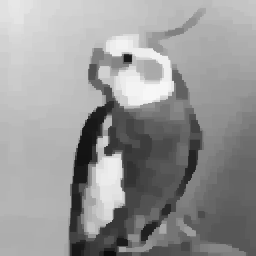}}
						\captionsetup{width=\textwidth} \caption{Original image, blurred and noisy observation $\#$ 20 with a blur of $9\times 9$ averaging kernel, and recovered images for FPDHF and DIFPDHF with $(\alpha_n^2)_{n \in \N}$, with their respective PSNR (dB), for $N=256$.} 
						\label{fig:x256}
					\end{figure}
					\begin{figure}
						\centering
						\subfloat[\scriptsize Original Image]{\label{fig:x5121}\includegraphics[width=0.24\textwidth]{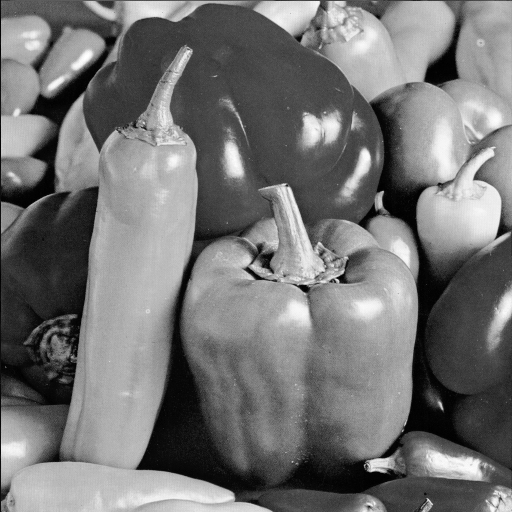}}\,
						\subfloat[\scriptsize $z_{20}$ (PSNR 28.17)]{\label{fig:x5122}\includegraphics[width=0.24\textwidth]{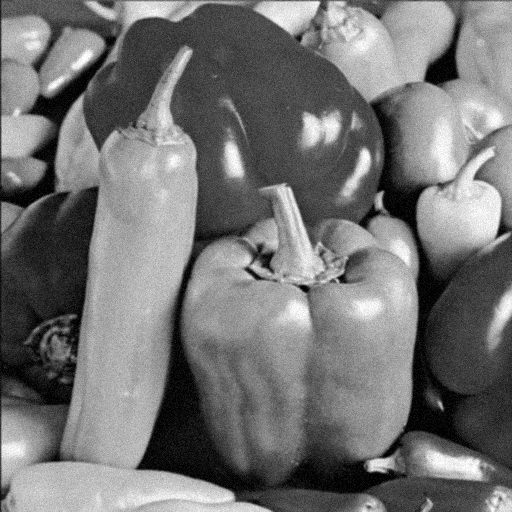}}\,
						\subfloat[\scriptsize FHRB (PSNR 31.43)]{\label{fig:x5123}\includegraphics[width=0.24\textwidth]{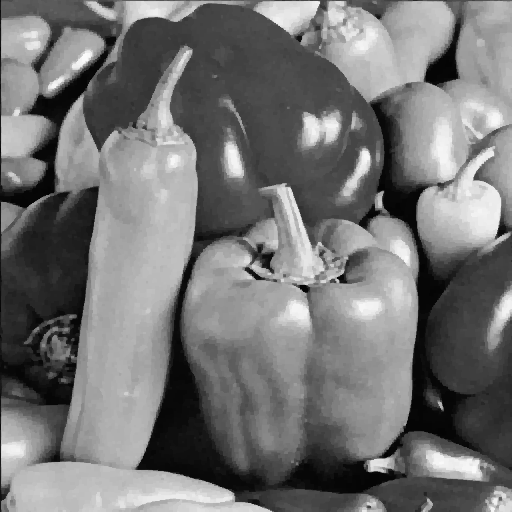}}\,
						\subfloat[\scriptsize FHRBR (PSNR 31.43)]{\label{fig:x5124}\includegraphics[width=0.24\textwidth]{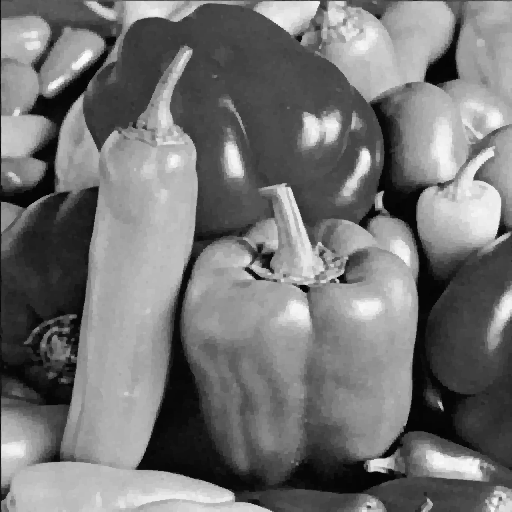}}
						\captionsetup{width=\textwidth} \caption{Original image, blurred and noisy observation $\#$ 20 with a blur of $3\times 3$ averaging kernel, and recovered images for FPDHF and DIFPDHF with $(\alpha_n^2)_{n \in \N}$, with their respective PSNR (dB), for $N=512$.} 
						\label{fig:x512}
					\end{figure}
					\cblue{
						\subsection{Linear convergence in denoising problems}\label{sec:numericallinearconvergence}
						In this section, we analyze the behavior of the FBF method, with and without the inertial step, in the case where the sum of the operators involved in the monotone inclusion is strongly monotone. In this setting, the algorithm converges linearly to the unique solution of the problem. To study this case, we consider the following denoising model. Let $N\in \N$ and let $x \in \R^{N\times N}$ be an image to be recovered from 
						an observation
						\begin{equation}\label{eq:modelim2}
							z = x+\epsilon,
						\end{equation}	
						where $\epsilon$ is an additive noise perturbing the observation. In this case, we aim to recover the original image $x$ by solving the following optimization problem.
						\begin{equation}\label{eq:opSM}
							\min_{x \in ^{N\times N}} 
							\frac{1}{2}\|x-z\|^2+\mu H_\delta(Wx),
						\end{equation}
						where $\mu \in \RPP$ is a regularization parameter, $H_\delta:\R^{N\times N}\to \R$ is the Huber function
						and  $W:\R^{N\times N}\to \R^{N\times N}$ is the orthogonal basis wavelet transform defined in Section~\ref{sec:imagerestoration}. By defining $A = \id-z$ and $D = \mu W\nabla H_\delta W$, this problem can be solved by FBF. Note that $A$ is $1$-strongly monotone and $D$ is $\mu/\delta$-Lipschitz. In our experiments we consider $N \in \{128,256,512\}$, a Gaussian noise with zero mean and variance $0.004$, $\mu = 0.07$, $\delta = 0.01$, and 20 random realizations of the observation $z$, for each dimension. The stopping criterion is a relative error with tolerance $10^{-9}$ and a maximum of $5000$ iterations. In this setting, we test the algorithm in \eqref{eq:algoFBHF} for $C=0$, $\lambda = 1$, $\tau = 0.9\delta/\mu$, and the values of $(\alpha_n)_{n \in \N}$ described in Table~\ref{Tab:FBFresult}. This table shows the results obtained in terms of the average number of iterations (IN) and the average CPU time (T). From this table, we observe the advantage of using a decreasing inertial sequence, which allows reaching the stopping criterion with fewer iterations and less computation time. The inertial sequence $\alpha_n = ({\sqrt{\mu/\delta+1}-1})({\sqrt{\mu/\delta+1}+1+0.0001n})^{-1}$,
						is motivated by \cite[Corollary~4.1]{Briceño2025Fista}. Although the analysis in \cite[Corollary~4.1]{Briceño2025Fista} is carried out for the FB method and strongly convex functions, the numerical experiments show good performance for this particular decreasing inertial sequence in FBF. This motivates further research aimed at finding optimal inertial sequences and convergence rates for NFB with inertia.
						\begin{table}[htbp]
							\centering
							\captionsetup{width=\linewidth}
							\renewcommand{\arraystretch}{1.2}
							\setlength{\tabcolsep}{6pt}\cblue{
								\begin{tabular}{cccccccc}
									&   & \multicolumn{2}{c}{$N=128$} & \multicolumn{2}{c}{$N=256$} & \multicolumn{2}{c}{$N=512$} \\
									\cmidrule(lr){3-8}
									Algorithm & $\alpha_n$  & IN & T & IN & T & IN & T \\
									\toprule
									FBF & 0  
									& 151& 0.11 
									& 148 & 0.36 
									& 149 & 3.21 \\
									\hline
									IFBF
									& $0.99\cdot2\left(\psi-1\right)\left(2\psi-1+\sqrt{8\psi-7}\right)^{-1}$ 
									& 138 & 0.11
									& 134 & 0.32
									& 136 & 3.00 \\
									\hline
									\multirow{2}{*}{DIFBF} 
									& $(9+0.00001\cdot n\cdot(\log(n))^{1.00001})^{-1}$   
									& 133 & 0.10 
									& 129 & 0.31 
									& 131 & 2.90 \\
									& {\footnotesize$(\sqrt{\mu/\delta+1}-1)(\sqrt{\mu/\delta+1}+1+0.0001n)^{-1}$}  
									& {\bf 71} & {\bf 0.05} 
									& {\bf 69} & {\bf 0.17} 
									& {\bf 74} & {\bf 1.65} \\
									\hline
								\end{tabular}
								\caption{\cblue{Comparison of FBF with and without inertial in the strongly monotone case. We consider the recurrence in \eqref{eq:algoFBHF} for $C=0$, $\lambda = 1$, and $\tau = 0.9\delta/\mu$. In this case $\psi={2}/{(1+\tau^2\zeta^2)}$. The best results obtained in terms of average number of iterations (IN) and average CPU time (T) are highlighted in bold.}}\label{Tab:FBFresult}}
						\end{table}
						\begin{figure}
							\centering
							\subfloat[$N=128$]{\label{fig:resFBF1}\includegraphics[width=0.32\textwidth]{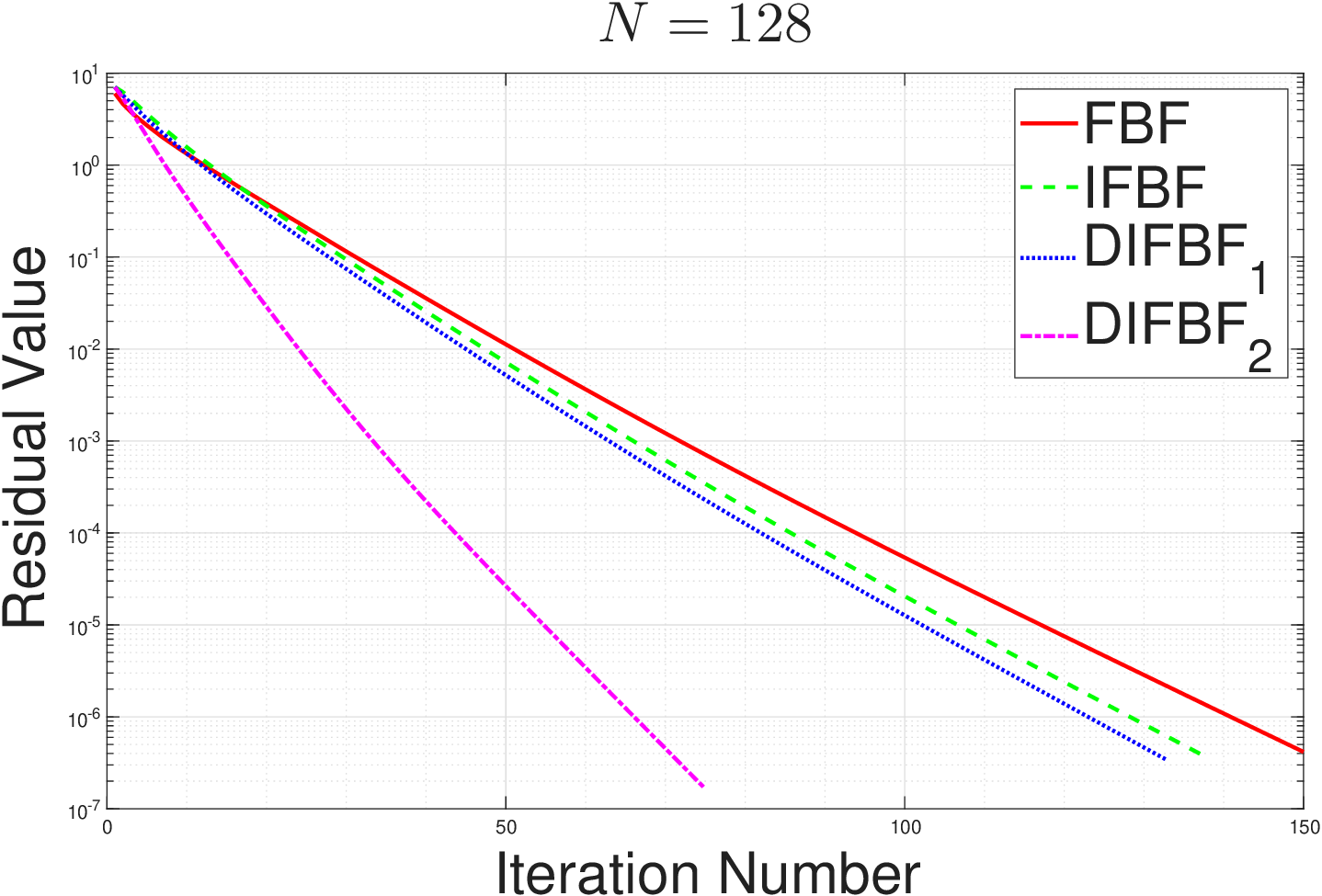}}\,
							\subfloat[$N=256$]{\label{fig:resFBF2}\includegraphics[width=0.32\textwidth]{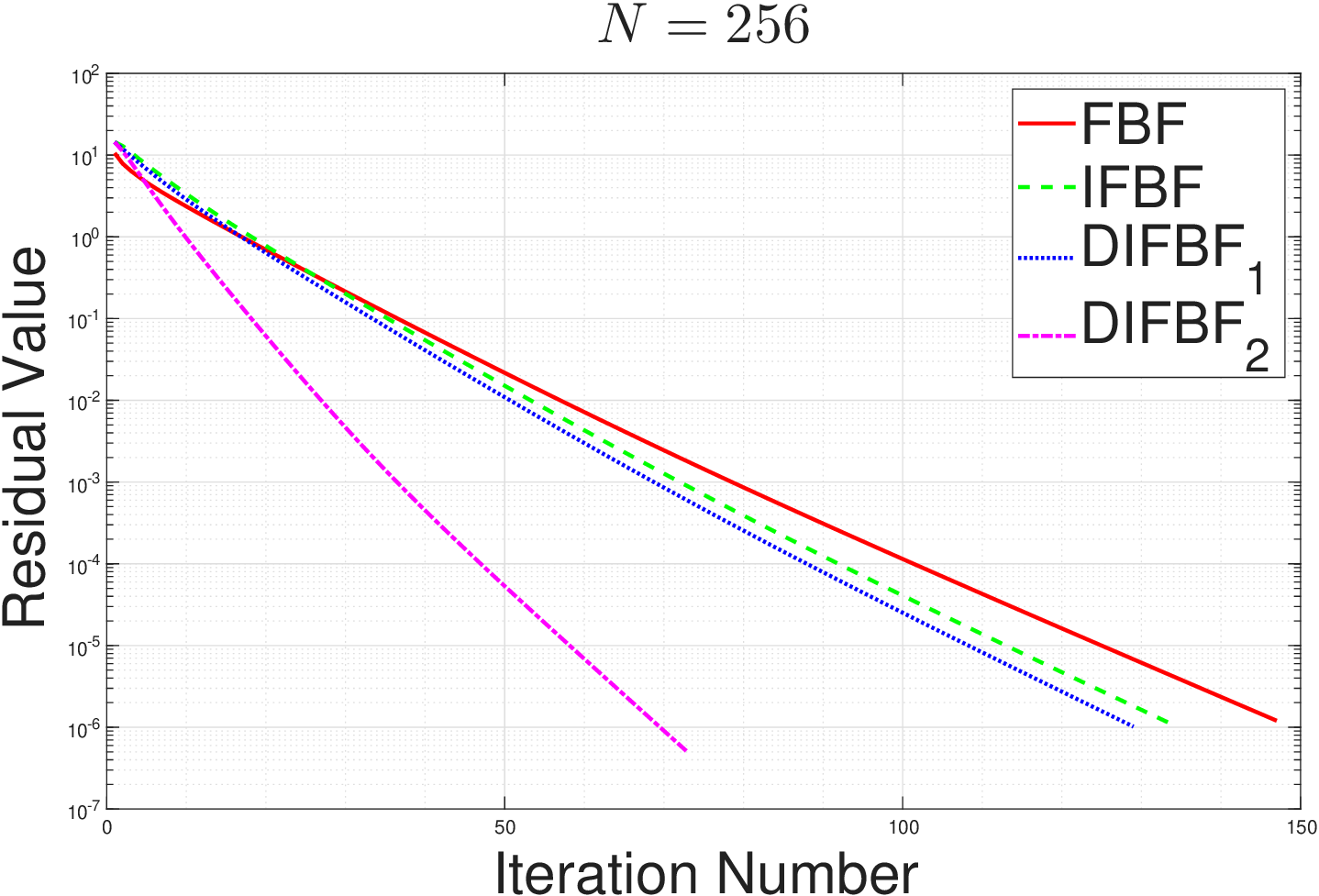}}
							\subfloat[$N=512$]{\label{fig:resFB3}\includegraphics[width=0.32\textwidth]{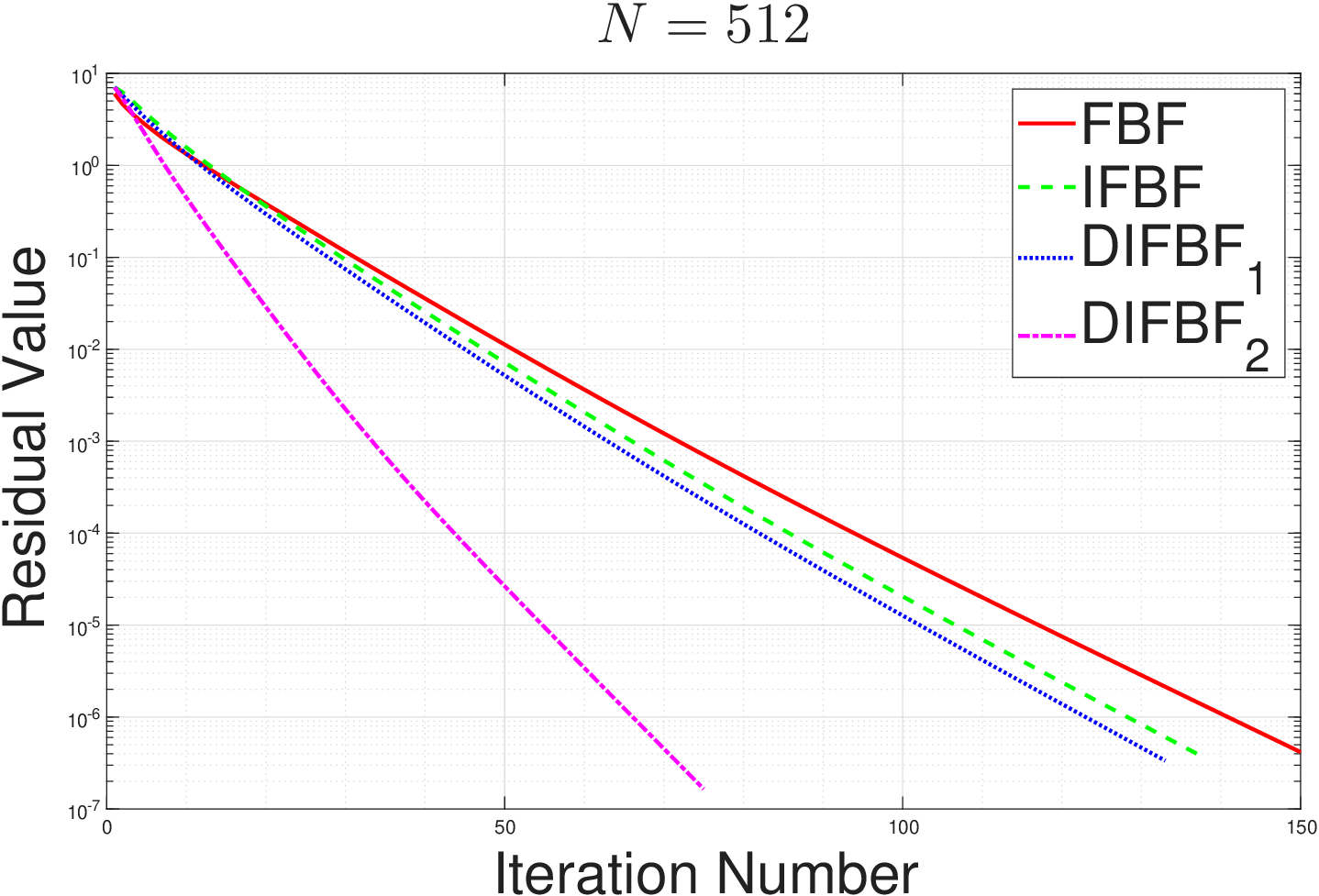}}
							\captionsetup{width=\textwidth} \caption{\cblue{Residual value ($\|x_n-z+D(x_n)\|$) for FBF, IFBF, and DIFBF. The graphs display the results for the first of the 20 random realizations of $z$.}}
							\label{fig:resFBF}
					\end{figure}}
					\section{Conclusions}\label{sec:conclu}
					In this article, we introduced an inertial and relaxed version of the Nonlinear Forward-Backward (NFB) algorithm and established its \cblue{weak and strong} convergence for both nondecreasing and decreasing sequences of inertial parameters. The analysis for decreasing sequences offers a novel perspective in the study of inertial methods. As particular cases, we recovered the convergence of the inertial and relaxed versions of algorithms such as Forward-Backward, Forward-Backward-Forward, Chambolle--Pock, and Condat--V\~u, extending previous results to the decreasing case. Furthermore, we derived new convergence results for the Forward-Backward-Half-Forward and Forward-Primal-Dual-Half-Forward algorithms incorporating inertial and relaxation steps. Finally, numerical experiments on optimization problems with affine constraints and image restoration problems demonstrated that using decreasing inertial sequences can enhance the practical performance of these methods by accelerating convergence. A drawback of this approach is the need to choose an appropriate decreasing inertial sequence for each problem. Nevertheless, this acceleration is particularly useful in scenarios where one needs to solve multiple instances of a problem with fixed operators and parameters but varying observations, as in image reconstruction or restoration. \cblue{Hence, these results, and in particular the \emph{singular} choice of the inertial sequence that yields the favorable numerical results observed in Section~\ref{sec:numericallinearconvergence} for strongly monotone operators, motivate further research aimed at determining the rate of convergence in both the monotone and strongly monotone cases, as well as identifying the optimal inertial and relaxation parameters to enhance convergence.}
					\section*{Acknowledgments}
					The first author was partially supported by ANID-Chile grant Exploración 13220097, Centro de Modelamiento Matemático (CMM) BASAL fund FB210005 for centers of excellence, from ANID-Chile, and FONDECYT Postdoctorado Grant 3250609. The second author was partially supported by ANID under grant FONDECYT Iniciación 11250164. The third author was partially supported by Universidad de Tarapac\'a internal research project Fortalecimientos Grupos de Investigaci\'on C\'odigo 8802-25.

					\bibliographystyle{spmpsci.bst}
					\bibliography{ref}

				\end{document}